\documentclass[11pt]{amsart}
\usepackage{times}
\usepackage[english]{babel}
\usepackage[left=3.1cm,right=3.1cm,top=3cm,bottom=3cm]{geometry}
\usepackage{amsmath,amsfonts,amssymb,amsthm,cases,upgreek}
\usepackage{empheq}
\usepackage{stmaryrd}

\usepackage{subfigure}  
\usepackage{dsfont}

\usepackage{color}

\theoremstyle{plain}

\newtheorem{thm}{Theorem}[section]

\newtheorem{lemma}[thm]{Lemma}

\newtheorem{prop}[thm]{Proposition}
\newtheorem{remark}[thm]{Remark}

\newcommand{\R}{\mathbb{R}}

\newcommand{\RD}{\mathbb{R}^{d}}

\newcommand{\lver}{\left\lvert}

\newcommand{\rver}{\right\rvert}

\newcommand{\Hmu}{\mathcal{H}_{\mu}}
\newcommand{\Ub}{\textbf{U}}
\newcommand{\N}{\mathbb{N}}

\DeclareMathOperator{\sech}{sech}

\def \epsilon {\varepsilon}

\numberwithin{equation}{section}

\begin{document}

\title{A splitting method for deep water with bathymetry} 

\author[A. Bouharguane ]{Afaf Bouharguane}
\address{Univ. Bordeaux, IMB, INRIA MEMPHIS, UMR 5251, F-33400 Talence, France.}
\email{afaf.bouharguane@math.u-bordeaux1.fr}

\author[B. Melinand]{Benjamin Melinand}
\address{Univ. Bordeaux, IMB,  UMR 5251, F-33400 Talence, France.}
\email{benjamin.melinand@math.u-bordeaux1.fr}

\begin{abstract}
\noindent In this paper we derive and prove the wellposedness of a deep water model that generalizes the Saut-Xu system for nonflat bottoms. Then, we present a new numerical method based on a splitting approach for studying this system. The advantage of this method is that it does not require any low pass filter to avoid spurious oscillations. We prove a local error estimate and we show that our scheme represents a good approximation of order one in time. Then, we perform some numerical experiments which confirm our theoretical result and we study three physical phenomena : \textcolor{black}{the evolution of water waves over a rough bottom}; the evolution of a KdV soliton when the shallowness parameter increases; the homogenization effect of rapidly varying topographies on water waves.
\end{abstract}

\maketitle

\section{Introduction} 

\subsection{Presentation of the problem}

\noindent Understanding the influence of the topography on water waves is an important issue in oceanography. Many physical phenomena are linked to the variation of the topography : shoaling, rip currents, diffraction, Bragg reflection. Since the direct study on the Euler equations is quite involved, several authors derived and justified asymptotic models according to different small parameters. A usual way to derive asymptotic models is to start from the Zakharov/Craig-Sulem-Sulem formulation \cite{Zakharov,Craig_Sulem_1,Craig_Sulem_2}, which is a good formulation for irrotational water waves, and to expand the Dirichlet-Neumann operator. Then, in the shallow water regime for example, several models were obtained like the Saint-Venant equations or the Green-Naghdi or Boussinesq equations, see \cite{Alvarez_Lannes,Lannes_ww}, \cite{Iguchi_shallow_water} for instance. The present paper addresses the influence of the bathymetry in deep water, in the sense explained below.

\medskip

\noindent In this paper,  $a$ denotes the typical amplitude of the water waves, $L$ the typical length, $H$ the typical height and $a_{bott}$ the typical amplitude of the bathymetry. Then, we introduce three parameters : $\epsilon = \frac{a}{H}$ the nonlinearity parameter, $\mu = \frac{H^{2}}{L^{2}}$ the shallowness parameter and $\beta=\frac{a_{bott}}{H}$ the bathymetric parameter. We recall that assuming $\mu$ small leads to shallow water models. In deep water, which is typically the case when $\mu$ is of order $1$, it is quite common to assume that the steepness parameter $\epsilon \sqrt{\mu} = \frac{a}{L}$ is small. The first asymptotic model with a small steepness assumption was derived by Matsuno in 2D for a flat and non-flat bottom and weakly transverse 3D water waves \cite{matsuno_flat,matsuno_bott}. Then, Choi extended this result in 3D for flat bottom \cite{Choi}. Finally, Bonneton and Lannes  gave a 3D version in the case of a non-flat bottom \cite{bonneton_lannes}. It is 
 important to notice that these models are only formally derived. It is proven in \cite{Alvarez_Lannes} that smooth enough solutions to theses models are close to the solutions of the water waves equations but, to the best of our knowledge, the wellposedness of the Mastuno equations, even in the case of a flat bottom, is still an open problem. This system could be illposed (see Ambrose, Bona and Nicholls \cite{bona_ill}). To avoid this difficulty, Saut and Xu (\cite{saut_xu}) developed an \textcolor{black}{equivalent system} to the Matsuno system which is consistent with the water waves problem and with the same accuracy. Then, they proved that this new system is wellposed. However, this model is for a flat bottom. In this paper, we derive (see Section \ref{saut_xu_system_derivation}), use, and prove the wellposedness of a generalization of the Saut-Xu system with a non-flat bottom which is the following system

\begin{equation}\label{saut-xu-deep}
\left\{
\begin{aligned}
& \hspace{-0.05cm} \partial_{t} \zeta \hspace{-0.1cm} - \hspace{-0.1cm} \Hmu v \hspace{-0.1cm} + \hspace{-0.1cm} \epsilon \sqrt{\mu} \left( \frac{1}{2} v \partial_{x} \zeta  \hspace{-0.1cm} + \hspace{-0.1cm} \frac{1}{2} \Hmu \hspace{-0.05cm} \left(v \partial_{x} \hspace{-0.05cm} \Hmu \zeta \hspace{-0.05cm} \right) \hspace{-0.1cm} + \hspace{-0.1cm} \Hmu \hspace{-0.05cm} \left( \hspace{-0.05cm} \zeta \partial_{x} \Hmu v \hspace{-0.05cm} \right) \hspace{-0.1cm} + \hspace{-0.1cm} \zeta \partial_{x} v \right) = \beta \sqrt{\mu} \partial_{x}  \hspace{-0.05cm} \left( \hspace{-0.03cm} B_{\mu} v \right)\\
& \hspace{-0.05cm}  \partial_{t} v + \partial_{x} \zeta + \frac{3 \epsilon \sqrt{\mu}}{2} v \partial_{x} v - \frac{\epsilon \sqrt{\mu}}{2} \partial_{x} \zeta \Hmu \partial_{x} \zeta - \frac{\epsilon \sqrt{\mu}}{2} v \Hmu^{2} \partial_{x} v = 0,
\end{aligned}
\right.
\end{equation}

\noindent where (see Subsection \ref{notations} for the notations)

\begin{equation*}
\Hmu = - \frac{\tanh(\sqrt{\mu} D)}{D} \partial_{x} \text{   and   } B_{\mu} = \text{sech}( \sqrt{\mu} D) \left(b \; \text{sech}( \sqrt{\mu} D) \;  \cdot \; \right).
\end{equation*}

\noindent Many authors developed numerical approaches to study the impact of the bottom on water waves, see for instance \cite{Mei_bragg}, \cite{liu_bragg}, \cite{smith}, \cite{guyenne_nicholls_solitary_slopes}, \cite{Cathala}, \cite{nachbin_nonsmoooth}, \cite{hamilton_rapidlybott}, \cite{bonneton_lannes_marche_serre}, \cite{bonneton_lannes_marche_splitting}). However to the best of our knowledge, when one works with deep water, there is no convergence result in the literature. After the original work of Craig and Sulem (\cite{Craig_Sulem_1}) and the paper of Craig et al. (\cite{craig_rough_bott}), Guyenne and Nicholls (\cite{numeric_Guyenne_moving_bott}) developed a numerical method based on a pseudospectral method and a fourth-order Runge-Kutta scheme for the time integration. The linear terms are solved exactly whereas the nonlinear terms are viewed as source terms. Their approach has been developed for the whole water waves equations but we could easily adapt it to our system. However with their scheme, we observe spurious oscillations in the wave profile that lead to instabilities. These errors seem to appear when the nonlinear part is evaluated via the Fourier transform. This is the aliasing phenomenon. Guyenne and Nicholls also observe these oscillations and, to fix it, they apply at every time step a low-pass filter. The scheme that we propose in this paper avoids this low-pass filter.

\medskip 
\medskip

\noindent We present a new numerical method based on a splitting approach for studying nonlinear water waves in the presence of a bottom. We remark that the Saut-Xu system contains a dispersive part and a nonlinear transport part. Thus, the splitting method becomes an interesting alternative to solve the system since this approach is commonly used to split different physical terms, see for instance \cite{RS09}. We also motivate our decomposition by the fact that, due to the pseudodifferential operator, some terms in the dispersive part may be computed efficiently using the fast Fourier transform. The transport part is computed \textcolor{black}{by a Lax-Wendroff method}. Various versions of the splitting method have been developed for instance for the nonlinear Schrodinger, the viscous Burgers equation, Korteweg-de-Vries equations \cite{carles_split,HoldenLR13,Lu08,Sa07,TA84}. Thanks to this splitting, we only use a pseudospectral method for the nonlocal terms (contrary to \cite{Craig_Sulem_1,numeric_Guyenne_moving_bott}), which limits the aliasing phenomenon and allows us to avoid a low-pass filter.

\medskip
\medskip

\noindent We denote by $\Phi^t$ the nonlinear flow associated to the Saut-Xu system \eqref{saut-xu-deep}, $\Phi^t_\mathcal{A}$ and $\Phi^t_{\mathcal{D}}$, respectively, the evolution operator associated with the transport part (see equation \eqref{transport_part}) and with the dispersive part (see \eqref{dispersive_part}). We consider the Lie formula defined by

\begin{equation}
\mathcal{Y}^t =  \Phi^t_{\mathcal{A}} \Phi^t_{\mathcal{D}}.  \\
\end{equation}

\noindent The Saut-Xu system \eqref{saut-xu-deep} is a quasilinear system. This implies derivatives losses in the proof of the convergence. In Theorem \ref{thm_global_error}, we show that the numerical solution converges to the solution of the Saut-Xu system \eqref{saut-xu-deep} in the $H^{N+\frac{1}{2}} \times H^{N}$-norm for initial data in $H^{N+\frac{1}{2}-7} \times H^{N-7}(\R)$, \textcolor{black}{where $N \geq 7.$}

\medskip
\medskip

\noindent Notice that it is not \textcolor{black}{hard} to generalize the present work to the Lie formula $\Phi^t_\mathcal{D} \Phi^t_{\mathcal{A}}$. We also make the choice to prove a convergence result for a Lie splitting but our proof can be adapted to a Strang splitting or a more complex one. Finally, notice that our scheme can be used for other equations. 

\medskip
\medskip

\noindent The paper is organised as follows. In the next section,  we extend the Saut-Xu system by adding a \textcolor{black}{topography term} and we prove a local wellposedness result. We also show that the flow map $\Phi^t$ is uniformly Lipschitzean. In section \ref{proof_splitting}, we  split the problem and we give some estimates on $\Phi^t_\mathcal{A}$ and $\Phi^t_\mathcal{D}$. In Section \ref{mainsection}, we prove a local error estimate and  we show that the Lie method represents a good approximation of order one in time (Theorem \ref{thm_global_error}). Finally, in Section \ref{numsection}, we perform some numerical experiments which confirm our theoretical result and we illustrate three physical phenomena :  \textcolor{black}{the evolution of water waves over a rough bottom}; the evolution of a KdV soliton when the shallowness parameter increases and the homogenization effect of rapidly varying topographies on water waves.

\medskip 

\subsection{Notations and assumptions}\label{notations} 

\begin{itemize}
\item $x$ denotes the horizontal variable and $z$ the vertical variable. In this paper, we only study the 2D case ($x\in \R$). 
\item We assume that 

\begin{equation}
\label{parameters_constraints}
0 \leq \epsilon, \beta \leq 1 \text{,   } \exists \mu_{\max} > \mu_{\min} > 0 \text{, } \mu_{\max} \geq \mu \geq \mu_{\min}.
\end{equation}

\item We denote  $\delta = \max(\epsilon, \beta)$.

\item \textcolor{black}{We denote $\Lambda = (1- \partial_x^2)^{1/2} $ and $H^s(\R) = \left\{ u \in L^2(\R), ||u||_{H^2} = ||\Lambda^s u||_{L^2} < \infty \right\}  $ the usual Sobolev space for $s \geq 0$.  }

\item Let $f \in \mathcal{C}^{0} \left(\R \right)$ and $m \in \mathbb{N}$ such that $ \frac{f}{1+|x|^{m}}\in L^{\infty} \left(\R \right)$. We define the Fourier multiplier $f(D) : H^{m}\left(\R \right) \shortrightarrow L^{2}\left(\R \right)$ as

\begin{equation*}
\forall u \in H^{m}\left(\R \right) \text{, } \widehat{f(D) u}(\xi) = f(\xi) \widehat{u}(\xi).
\end{equation*}

\item $D$ denotes the Fourier multiplier corresponding to $\frac{\partial_{x}}{i}$. 
\item We denote by $C(c_1,c_2,...)$ a generic positive constant, strictly positive, which depends on parameters $c_1,c_2,\cdots$. 
\end{itemize}

\section{The Saut-Xu system}\label{saut_xu_system_derivation}

\noindent In this part, we extend the Saut-Xu system (\cite{saut_xu}) for a non-flat bottom. Then, we give a wellposedness result that generalizes the one of Saut and Xu.

\medskip
\medskip

\noindent The Matsuno system, which is a full dispersion model for deep waters, is an asymptotic model of the water waves equations with an accuracy of order $\mathcal{O} \left(\delta^{2} \right)$. Bonneton and Lannes \cite{bonneton_lannes} formulated it in the following way in the presence of a non flat topography

\begin{equation}\label{matsuno}
\left\{
\begin{array}{l}
\partial_{t} \zeta - \frac{1}{\sqrt{\mu} \nu} \mathcal{H}_{\mu} v + \frac{\epsilon}{\nu} \left( \Hmu \left(\zeta \partial_{x} \Hmu v \right) + \partial_{x} \left( \zeta v \right) \right) =  \frac{\beta}{\nu} \partial_{x} \left(B_{\mu} v \right) \\
 \partial_{t} v + \partial_{x} \zeta + \frac{\epsilon}{\nu} v \partial_{x} v -\epsilon \sqrt{\mu} \partial_{x} \zeta \Hmu \partial_{x} \zeta = 0,
\end{array}
\right.
\end{equation}
where $\zeta = \zeta(t,x)$ is the free surface, $v = v(t,x)$ is the horizontal velocity at the surface, $\nu = \frac{\tanh(\sqrt{\mu})}{\sqrt{\mu}}$, $\Hmu$ and $B_{\mu}$ are Fourier multipliers, 

\begin{equation*}
\Hmu = - \frac{\tanh(\sqrt{\mu} D)}{D} \partial_{x} \text{   and   } B_{\mu} = \text{sech}( \sqrt{\mu} D) \left(b \; \text{sech}( \sqrt{\mu} D) \;  \cdot \; \right),
\end{equation*}

\noindent and \textcolor{black}{$-1+\beta b$ is the topography}. \textcolor{black}{(We erased the fluid part)}
\medskip
\medskip

\noindent In \cite{Alvarez_Lannes}, Alvarez-Samaniego and Lannes show that this model is consistent with the Zakharov/Craig-Sulem-Sulem formulation when $\beta = 0$  and it is not painful to generalize their result to the case when $\beta \neq 0$. In \cite{saut_xu}, Saut and Xu obtained a new model with the same accuracy \textcolor{black}{with the Matsuno system} thanks to a nonlinear change of variables. \textcolor{black}{Notice that this change of variables is inspiblack by \cite{bona_colin_lannes}}. The advantage of this model is that they proved a local wellposedness on large time for this new model. We follow their approach. We define new variables

\begin{equation}\label{change_variable}
\widetilde{v} = v + \frac{\epsilon \sqrt{\mu}}{2} v \Hmu \partial_{x} \zeta \text{   and   } \widetilde{\zeta} = \zeta  - \frac{\epsilon \sqrt{\mu}}{4} v^{2}.
\end{equation}

\noindent Then, up to terms of order $\mathcal{O} \left( \delta^{2} \right)$, $\widetilde{\zeta}$ and $\widetilde{v}$ satisfy (we omit the tildes for the sake of simplicity) 

\small
\begin{equation}\label{saut-xu}
\left\{
\begin{aligned}
& \hspace{-0.15cm} \partial_{t} \zeta \hspace{-0.07cm} + \hspace{-0.07cm} \left(\hspace{-0.07cm} \frac{\epsilon}{\nu} \hspace{-0.07cm} - \hspace{-0.07cm} \frac{\epsilon \sqrt{\mu}}{2} \right) v \partial_{x} \zeta \hspace{-0.07cm} - \hspace{-0.07cm} \frac{1}{\sqrt{\mu} \nu} \Hmu v \hspace{-0.07cm} + \hspace{-0.07cm} \frac{\epsilon}{\nu} \left( \frac{1}{2} \Hmu \left(v \partial_{x} \Hmu \zeta \right) \hspace{-0.07cm} + \hspace{-0.07cm} \Hmu  \hspace{-0.07cm} \left( \hspace{-0.02cm} \zeta  \hspace{-0.02cm} \partial_{x}  \hspace{-0.02cm} \Hmu v \hspace{-0.02cm} \right) \hspace{-0.09cm} + \hspace{-0.05cm} \zeta  \hspace{-0.03cm} \partial_{x}  \hspace{-0.03cm} v  \hspace{-0.08cm}\right) \hspace{-0.1cm} = \hspace{-0.1cm} \frac{\beta}{\nu} \partial_{x} \left(B_{\mu} v \right)\\
& \hspace{-0.15cm} \partial_{t} v + \left(\frac{\epsilon}{\nu} + \frac{\epsilon \sqrt{\mu}}{2} \right) v \partial_{x} v + \partial_{x} \zeta - \frac{\epsilon \sqrt{\mu}}{2} \partial_{x} \zeta \Hmu \partial_{x} \zeta - \frac{\epsilon}{2 \nu} v \Hmu^{2} \partial_{x} v = 0.
\end{aligned}
\right.
\end{equation}
\normalsize

\noindent Since our motivation is the study of water waves in deep water ($\mu$ close to $1$), we assume that $\nu=\frac{1}{\mu}$. \textcolor{black}{Hence we study the following system which is a variable bottom analog of the system of Saut and Xu}

\begin{equation*}
\left\{
\begin{aligned}
& \hspace{-0.05cm} \partial_{t} \zeta \hspace{-0.1cm} - \hspace{-0.1cm} \Hmu v \hspace{-0.1cm} + \hspace{-0.1cm} \epsilon \sqrt{\mu} \left( \frac{1}{2} v \partial_{x} \zeta  \hspace{-0.1cm} + \hspace{-0.1cm} \frac{1}{2} \Hmu \hspace{-0.05cm} \left(v \partial_{x} \hspace{-0.05cm} \Hmu \zeta \hspace{-0.05cm} \right) \hspace{-0.1cm} + \hspace{-0.1cm} \Hmu \hspace{-0.05cm} \left( \hspace{-0.05cm} \zeta \partial_{x} \Hmu v \hspace{-0.05cm} \right) \hspace{-0.1cm} + \hspace{-0.1cm} \zeta \partial_{x} v \right) = \beta \sqrt{\mu} \partial_{x}  \hspace{-0.05cm} \left( \hspace{-0.03cm} B_{\mu} v \right)\\
& \hspace{-0.05cm}  \partial_{t} v + \partial_{x} \zeta + \frac{3 \epsilon \sqrt{\mu}}{2} v \partial_{x} v - \frac{\epsilon \sqrt{\mu}}{2} \partial_{x} \zeta \Hmu \partial_{x} \zeta - \frac{\epsilon \sqrt{\mu}}{2} v \Hmu^{2} \partial_{x} v = 0,
\end{aligned}
\right.
\end{equation*}

\noindent In the following, we denote $\textbf{U} = \left(\zeta, v \right)^{t}$ and we define the energy of the system for $N \in \mathbb{N}$ by

\begin{equation}\label{energy}
\mathcal{E}^{N} \!\! \left(\textbf{U} \right) = \frac{1}{\sqrt{\mu}} \left\lvert \Lambda^{N} \zeta \right\rvert_{2}^{2} + \left\lvert |D|^{\frac{1}{2}} \Lambda^{N} \zeta \right\rvert_{2}^{2} + \left\lvert v \right\rvert_{H^{N}}^{2},
\end{equation}
\textcolor{black}{where $ \Lambda = \sqrt{1 + |D|^2} $ and $D = - i \nabla $}. We also denote by $E^{N}_{\mu}$ the energy space related to this norm. 

\begin{remark}\label{explication_assumption_mu}
\noindent Notice that if $\mu$ satisfies condition \eqref{parameters_constraints}, the energy $\mathcal{E}^{N}$ is equivalent to the $H^{N+\frac{1}{2}} \times H^{N}$-norm.
\end{remark}

\noindent The main result of this section is the following local wellposedness result. 

\begin{thm}\label{existence_saut_xu}
Let $N \geq 2$, \upshape$\textbf{U}_{0} \in H^{N+\frac{1}{2}} \left(\R \right) \times H^{N} \left(\R \right)$ \itshape and \upshape $b \in L^{\infty} \left(\R \right)$\itshape. We assume that $\epsilon, \beta, \mu$ satisfy Condition \eqref{parameters_constraints} and 

\upshape
\begin{equation*}
\lver \textbf{U}_{0} \rver_{H^{N+\frac{1}{2}} \times H^{N}} + \lver b \rver_{L^{\infty}} \leq M.
\end{equation*}
\itshape

\noindent Then, there exists a time $T_{0} = T_{0} \left(M, \frac{1}{\mu_{\min}}, \mu_{\max}  \right)$ independent of $\epsilon$, $\mu$ and $\beta$ and a unique solution \upshape$\textbf{U} \in \mathcal{C} \left(\left[0, \frac{T_{0}}{\delta} \right], E^{N}_{\mu} \right)$ \itshape of the system \eqref{saut-xu-deep} with initial data \upshape$\textbf{U}_{0}$\itshape. Furthermore, we have the following energy estimate, for all $t \in \left[0, \frac{T_{0}}{\delta} \right]$,

\upshape
\begin{equation*}
\mathcal{E}^{N} \!\! \left(\textbf{U}(t, \cdot) \right) \leq e^{\delta C_{0} t} \mathcal{E}^{N} \!\! \left(\textbf{U}_{0} \right),
\end{equation*}
\itshape

\noindent where $C_{0} = C \left(M, \frac{1}{\mu_{\min}}, \mu_{\max}  \right)$.
\end{thm}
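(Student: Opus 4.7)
The plan is to prove local wellposedness by a symmetrization and a priori energy estimate on \eqref{saut-xu-deep}, combined with a Friedrichs-type regularized approximation for existence. The crucial feature is that the time scale $T_{0}/\delta$ arises because the principal (linear) part of the system is symmetrizable in the norm $\mathcal{E}^{N}$, while every nonlinear and bathymetric contribution carries an explicit prefactor $\epsilon$ or $\beta$.

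First I would justify the choice of energy. The linearization of \eqref{saut-xu-deep} around zero is $\partial_{t}\zeta=\mathcal{H}_{\mu}v$, $\partial_{t}v=-\partial_{x}\zeta$. Since $\mathcal{H}_{\mu}$ has Fourier symbol $-i\tanh(\sqrt{\mu}\xi)\mathrm{sgn}(\xi)$, its norm behaves like $|D|^{1/2}\cdot|D|^{1/2}\mathrm{sech}$-type bounds, and pairing $\Lambda^{N}$ of the first equation against $\tfrac{1}{\sqrt{\mu}}\Lambda^{N}\zeta+|D|\Lambda^{N}\zeta$ and $\Lambda^{N}$ of the second against $\Lambda^{N}v$ makes the top-order couplings cancel after integration by parts. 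This forces precisely the three pieces appearing in \eqref{energy}; the $1/\sqrt{\mu}$ factor ensures uniform equivalence with $H^{N+1/2}\times H^{N}$ under \eqref{parameters_constraints}.

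Next I would apply $\Lambda^{N}$ to each equation and commute it through every nonlinear term, using Kato-Ponce type commutator estimates together with the symbolic calculus for $\mathcal{H}_{\mu}$ (bounded on $H^{s}$ uniformly in $\mu$ under \eqref{parameters_constraints}). The $B_{\mu}$ term is the easiest: the two $\mathrm{sech}(\sqrt{\mu}D)$ factors smooth arbitrarily, so $B_{\mu}$ is bounded on every $H^{s}$ by $C|b|_{L^{\infty}}$ with constants depending only on $\mu_{\min}$, and its contribution to $\frac{d}{dt}\mathcal{E}^{N}(\textbf{U})$ is of size $\beta C_{0}\mathcal{E}^{N}(\textbf{U})$. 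Each nonlinear term carries $\epsilon\sqrt{\mu}$ and, after symmetrization via integration by parts and commutator identities, yields a contribution of size $\epsilon C_{0}\mathcal{E}^{N}(\textbf{U})$. Summing gives
\begin{equation*}
\frac{d}{dt}\mathcal{E}^{N}(\textbf{U})\le\delta\,C_{0}\,\mathcal{E}^{N}(\textbf{U}),
\end{equation*}
and Gronwall produces the claimed exponential bound on $[0,T_{0}/\delta]$.

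The main obstacle is the quasilinear character of the system: each of the terms $\tfrac{1}{2}v\partial_{x}\zeta$, $\mathcal{H}_{\mu}(v\partial_{x}\mathcal{H}_{\mu}\zeta)$, $\mathcal{H}_{\mu}(\zeta\partial_{x}\mathcal{H}_{\mu}v)$, $\zeta\partial_{x}v$ in the first equation and $v\partial_{x}v$, $\partial_{x}\zeta\,\mathcal{H}_{\mu}\partial_{x}\zeta$, $v\mathcal{H}_{\mu}^{2}\partial_{x}v$ in the second appears to lose half a derivative at top order when estimated in $\mathcal{E}^{N}$. The cure exploits two structural facts: $\mathcal{H}_{\mu}$ is skew-adjoint, so cross terms involving it either vanish or become commutators that gain a derivative; and the particular coefficients in \eqref{saut-xu-deep} were designed through the change of variables \eqref{change_variable} precisely so that the resulting symmetrization cancels the worst terms. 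Verifying this algebraic cancellation, together with producing sharp commutator bounds involving the operator $\mathcal{H}_{\mu}$ with explicit dependence on $\mu_{\min}$, $\mu_{\max}$, is the technical heart of the argument. Once the a priori estimate is in hand, existence follows from a standard Friedrichs projection $\textbf{U}^{n}=\mathrm{P}_{n}\textbf{U}$ to frequencies $|\xi|\le n$, giving ODEs on a finite-dimensional space with $n$-uniform energy bounds; compactness yields a solution in $\mathcal{C}([0,T_{0}/\delta],E^{N}_{\mu})$, and uniqueness and Lipschitz dependence follow from the analogous energy estimate on the difference of two solutions at one level of regularity below $N$.
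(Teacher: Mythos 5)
Your proposal follows essentially the same route as the paper: the symmetrizer you build by pairing against $\tfrac{1}{\sqrt{\mu}}\Lambda^{N}\zeta+|D|\Lambda^{N}\zeta$ is exactly the paper's $\mathcal{S}=\mathrm{diag}\bigl(D/\tanh(\sqrt{\mu}D),1\bigr)$, the treatment of $B_{\mu}$ via the smoothing of $\operatorname{sech}(\sqrt{\mu}D)$ matches estimate \eqref{estim_3}, and the commutator/cancellation analysis of the quasilinear terms is what the paper imports from Saut--Xu. The only compression is that the nonlinear terms first give $\tfrac{d}{dt}\mathcal{E}^{N}\le\delta C\bigl((\mathcal{E}^{N})^{3/2}+\mathcal{E}^{N}\bigr)$, so a short bootstrap on $[0,T_{0}/\delta]$ is needed before the linear Gronwall bound you state, but this is the standard step the paper also performs.
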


\begin{proof}
We refer to Paragraph IV in \cite{saut_xu} for a complete proof and we focus only on the bottom contribution. For $0 \leq \alpha \leq N$, we denote $\textbf{U}^{(\alpha)} = \left(\partial_{x}^{\alpha} \zeta, \partial_{x}^{\alpha} v \right)$. Then, applying $\partial_{x}^{\alpha}$ to System \eqref{saut-xu-deep}, we get

\begin{equation*}
\partial_{t} \textbf{U}^{(\alpha)} + \mathcal{L} \textbf{U}^{(\alpha)} + \frac{\epsilon \sqrt{\mu}}{2}  \mathds{1}_{\left\{ \alpha \neq 0 \right\}} \mathcal{B}[\textbf{U}] \textbf{U}^{(\alpha)} = \beta \sqrt{\mu} \left(\partial_{x} \partial_{x}^{\alpha} \left(B_{\mu} v \right), 0 \right)^{t} + \epsilon \sqrt{\mu} \mathcal{G}^{\alpha},
\end{equation*} 

\noindent where 

\begin{equation*}
\begin{aligned}
&\mathcal{L} = \begin{pmatrix} 0 & - \frac{1}{\sqrt{\mu} \nu} \Hmu \\ \partial_{x} & 0 \end{pmatrix}, \\
&\mathcal{B}[\textbf{U}] = \begin{pmatrix} \Hmu \left( v  \Hmu \partial_{x} \; \cdot \;  \right) + v \partial_{x} & \Hmu \left(  \; \cdot \; \Hmu \partial_{x} \zeta \right) - \partial_{x} \zeta \Hmu^{2} \\ -  \partial_{x} \zeta \Hmu \partial_{x} -  \Hmu \partial_{x} \zeta \partial_{x} & 3 v \partial_{x} - v \Hmu^{2} \partial_{x} \end{pmatrix},
\end{aligned}
\end{equation*}

\noindent and $\mathcal{G}^{\alpha} = (\mathcal{G}^{\alpha}_{1}, \mathcal{G}^{\alpha}_{2})^{t}$ with

\small
\begin{equation*}
\begin{aligned}
&\mathcal{G}^{\alpha}_{1} = \partial_{x}^{\alpha} g(\zeta,v)  -  \frac{1}{2} \underset{1 \leq \gamma \leq \alpha-1}{\sum} \hspace{-0.4cm} C^{\gamma}_{\alpha} \left( \Hmu ( \partial_{x}^{\gamma} v \Hmu \partial_{x}^{1+\alpha-\gamma} \zeta) + \partial_{x}^{\gamma} v \partial_{x}^{1+\alpha-\gamma} \zeta \right) -  \frac{1}{2} \partial_{x} \zeta (\Hmu^{2}+1) \partial_{x}^{\alpha} v\\
&\mathcal{G}^{\alpha}_{2} = \frac{1}{2} \underset{1 \leq \gamma \leq \alpha-1}{\sum} \hspace{-0.3cm} C^{\gamma}_{\alpha} \partial_{x}^{1+\gamma} \zeta \Hmu \partial_{x}^{1+\alpha-\gamma} \zeta + \underset{1 \leq \gamma \leq \alpha}{\sum} C^{\gamma}_{\alpha} \left( -\frac{3}{2} \partial_{x}^{\gamma} v \partial_{x}^{1+\alpha-\gamma} v + \frac{1}{2} \partial_{x}^{\gamma} v \Hmu^{2} \partial_{x}^{1+\alpha-\gamma} v \right)
\end{aligned}
\end{equation*}
\normalsize

\noindent where

\begin{equation*}
g(\zeta,v) = - [\Hmu, \zeta] \Hmu \partial_{x} v - \zeta (\Hmu^{2}+1) \partial_{x} v.
\end{equation*}

\noindent Then we can show, as in Paragraph IV. B in \cite{saut_xu} \textcolor{black}{(see the paragraph called Estimate on $\mathcal{G}^{\alpha}$) that}

\begin{equation}\label{estim_1}
\lver \mathcal{G}^{\alpha} \rver_{2} + \lver \lver D \rver^{\frac{1}{2}} \mathcal{G}^{\alpha} \rver_{2} \leq C \left(\frac{1}{\mu_{\min}} \right) \mathcal{E}^{N} \! \left( \textbf{U} \right).
\end{equation}

\noindent \textcolor{black}{Like Saut and Xu we define a symmetrizer for $\mathcal{L} + \mathcal{B}[\textbf{U}]$}

\begin{equation}\label{symmetrizer}
\mathcal{S} = \begin{pmatrix} \frac{D}{\tanh(\sqrt{\mu} D)} & 0 \\ 0 & 1 \end{pmatrix}.
\end{equation}

\noindent Notice that $\sqrt{\left(\mathcal{S} \;  \cdot , \cdot \right)}$ is a norm equivalent to $\sqrt{\mathcal{E}^{0}}$. \textcolor{black}{Then, as in Paragraph IV. B in \cite{saut_xu} (see the paragraph called Estimate on \textit{II}), we get }

\color{black}
\begin{equation}\label{estim_2}
\left(\left( \mathcal{L} + \frac{\epsilon \sqrt{\mu}}{2}  \mathds{1}_{\left\{ \alpha \neq 0 \right\}} \mathcal{B}[\textbf{U}] \right) \textbf{U}^{(\alpha)}, \mathcal{S}  \textbf{U}^{(\alpha)} \right)\leq \epsilon C \left(\frac{1}{\mu_{\min}}, \mu_{\max} \right) \sqrt{\mathcal{E}^{2} \left(\textbf{U} \right)} \mathcal{E}^{N} \! \left(\textbf{U} \right).
\end{equation}
\color{black}

\noindent Furthermore, for the bottom contribution, we easily get 

\small
\begin{equation}\label{estim_3}
\left\lvert \left( \frac{D}{\tanh(\sqrt{\mu} D)} \partial_{x}^{\alpha} \zeta, \partial_{x} \partial_{x}^{\alpha} \text{sech} \left(\sqrt{\mu} D \right) \left(b \; \text{sech} \left(\sqrt{\mu} D \right) v \right) \right) \right\rvert \leq C \left(\frac{1}{\mu_{\min}} \right) \lver b \rver_{\infty} \mathcal{E}^{N} \! \left(\textcolor{black}{\textbf{U}} \right). 
\end{equation}
\normalsize 

\noindent Finally, \textcolor{black}{from Equations \eqref{estim_1}, \eqref{estim_2}, \eqref{estim_3}}, we obtain, 

\begin{equation*}
\mathcal{E}^{N} \! \left(\textbf{U} \right) \leq \mathcal{E}^{N} \! \left(\textbf{U}_{0} \right) + \delta C \left(\frac{1}{\mu_{\min}}, \mu_{\max} \right) \int_{0}^{t} \left(\mathcal{E}^{N} \! \left(\textcolor{black}{\textbf{U}} \right)^{\frac{3}{2}} + \mathcal{E}^{N} \! \left(\textcolor{black}{\textbf{U}} \right) \right)(s) ds,
\end{equation*}

\noindent and there exists a time $T>0$, such that, for all $t \in \left[0, \frac{T}{\delta} \right]$,

\begin{equation*}
\mathcal{E}^{N} \! \left(\textbf{U}(t,\cdot) \right) \leq C \left(\frac{1}{\mu_{\min}}, \mu_{\max},  \mathcal{E}^{N} \!\! \left(\textbf{U}_{0} \right) \right).
\end{equation*}

\noindent The energy estimate follows from the Gronwall Lemma.

\end{proof}

\noindent In order to use a Lady Windermere's fan argument (\textcolor{black}{a well-known telescopic identity used to relate global and the local error)}, to prove the convergence of the numerical scheme, we need a  Lipschitz property for the flow of the Saut-Xu system \eqref{saut-xu-deep}.  We first give a control of the differential of the flow with respect to the initial datum.

\begin{prop}\label{lip_saut_diff}
Let $N \geq 2$, \upshape$\textbf{V}_{0} \in H^{N+\frac{1}{2}} \left(\R \right) \times H^{N} \left(\R \right)$ \itshape, \upshape$\textbf{U}_{0} \in H^{N+1+\frac{1}{2}} \left(\R \right) \times H^{N+1} \left(\R \right)$ \itshape, \upshape and  $b \in L^{\infty} \left(\R \right)$\itshape. We assume that $\epsilon, \beta, \mu$ satisfy Condition \eqref{parameters_constraints} and 

\upshape
\begin{equation*}
\lver \textbf{V}_{0} \rver_{H^{N+\frac{1}{2}} \times H^{N}} + \lver \textbf{U}_{0} \rver_{H^{N+1+\frac{1}{2}} \times H^{N+1}} + \lver b \rver_{L^{\infty}} \leq M.
\end{equation*}
\itshape

\noindent Then, there exists a time $T = T \left(M, \frac{1}{\mu_{\min}}, \mu_{\max}  \right)$ independent of the parameters $\epsilon$, $\mu$ and $\beta$ such that  \upshape$\left(\Phi^{t} \right)' \left( \textbf{U}_{0} \right) \cdot \left(\textbf{V}_{0} \right)$ \itshape exists on $\left[0, \frac{T}{\delta} \right]$. Furthermore, we have, for all $0 \leq t \leq \frac{T}{\delta}$,

\upshape
\begin{equation*}
\lver \left(\Phi^{t} \right)' \left( \textbf{U}_{0} \right) \cdot \left(\textbf{V}_{0} \right) \rver_{H^{N+\frac{1}{2}} \times H^{N}} \leq  C \left(\frac{1}{\mu_{\min}}, \mu_{\max}, M \right) \lver \textbf{V}_{0} \rver_{H^{N+\frac{1}{2}} \times H^{N}}.
\end{equation*}
\itshape
\end{prop}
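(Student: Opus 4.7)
The plan is to identify $(\Phi^t)'(\textbf{U}_0)\cdot\textbf{V}_0$ with the solution $\dot{\textbf{U}}$ of the Saut--Xu system linearized around $\textbf{U}(t):=\Phi^t(\textbf{U}_0)$. Formally differentiating \eqref{saut-xu-deep} with respect to the initial datum produces a linear system of the form
\begin{equation*}
\partial_t \dot{\textbf{U}} + \mathcal{L}\dot{\textbf{U}} + \epsilon\sqrt{\mu}\,\mathcal{B}[\textbf{U}]\dot{\textbf{U}} + \epsilon\sqrt{\mu}\,\mathcal{C}[\textbf{U}]\dot{\textbf{U}} = \beta\sqrt{\mu}\bigl(\partial_x B_\mu \dot{v},0\bigr)^t, \qquad \dot{\textbf{U}}(0)=\textbf{V}_0,
\end{equation*}
where $\mathcal{L}$ and $\mathcal{B}[\textbf{U}]$ are the operators already identified in the proof of Theorem \ref{existence_saut_xu} and $\mathcal{C}[\textbf{U}]\dot{\textbf{U}}$ collects the terms in which the differentiation falls on the coefficient rather than on the unknown (terms of the shape $\dot{v}\,\partial_x\zeta$, $\dot{\zeta}\,\partial_x v$, $\Hmu(\dot{v}\,\partial_x \Hmu\zeta)$, and their analogues). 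Contrary to $\mathcal{B}[\textbf{U}]$, the operator $\mathcal{C}[\textbf{U}]$ is \emph{not} symmetrized by $\mathcal{S}$, but it is of order zero in $\dot{\textbf{U}}$, so it can be treated as a source term provided $\partial_x \textbf{U}$ is controlled in $L^\infty$. To ensure this I would apply Theorem \ref{existence_saut_xu} at level $N+1$ to $\textbf{U}_0 \in H^{N+1+\frac{1}{2}}\times H^{N+1}$, which yields $\textbf{U}\in \mathcal{C}([0,T_0/\delta],E^{N+1}_\mu)$ with a bound depending only on $M$; this is precisely why the hypothesis demands one more derivative on $\textbf{U}_0$ than on $\textbf{V}_0$.

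I would then mimic the energy argument of Theorem \ref{existence_saut_xu}: applying $\partial_x^\alpha$ for $0\leq \alpha\leq N$ to the linearized system rewrites it as
\begin{equation*}
\partial_t \dot{\textbf{U}}^{(\alpha)} + \mathcal{L}\dot{\textbf{U}}^{(\alpha)} + \frac{\epsilon\sqrt{\mu}}{2}\mathds{1}_{\{\alpha\neq 0\}}\mathcal{B}[\textbf{U}]\dot{\textbf{U}}^{(\alpha)} = \beta\sqrt{\mu}(\partial_x\partial_x^\alpha B_\mu\dot{v},0)^t + \epsilon\sqrt{\mu}\widetilde{\mathcal{G}}^\alpha,
\end{equation*}
where $\widetilde{\mathcal{G}}^\alpha$ gathers the commutators $[\partial_x^\alpha,\mathcal{B}[\textbf{U}]]\dot{\textbf{U}}$, the full $\partial_x^\alpha \mathcal{C}[\textbf{U}]\dot{\textbf{U}}$ and the bathymetry commutator. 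The analogue of \eqref{estim_1}, namely $\lver \widetilde{\mathcal{G}}^\alpha\rver_2 + \lver |D|^{1/2}\widetilde{\mathcal{G}}^\alpha\rver_2 \leq C(M,1/\mu_{\min})\mathcal{E}^N(\dot{\textbf{U}})$, should follow from the same commutator, product and Hilbert transform lemmas as in Paragraph IV.B of \cite{saut_xu}, the only new ingredient being that every derivative of $\textbf{U}$ is placed in $L^\infty$ via the control of $\textbf{U}$ in $E^{N+1}_\mu$. Taking the inner product with $\mathcal{S}\dot{\textbf{U}}^{(\alpha)}$, invoking \eqref{estim_2} for the principal part and \eqref{estim_3} for the bottom term, summing over $\alpha$ and applying Gronwall on a suitable $[0,T/\delta]$ delivers $\mathcal{E}^N(\dot{\textbf{U}}(t))\leq C(M,1/\mu_{\min},\mu_{\max})\mathcal{E}^N(\textbf{V}_0)$, which is equivalent to the claimed bound by Remark \ref{explication_assumption_mu}.

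To identify this $\dot{\textbf{U}}$ with the actual Fréchet differential of $\Phi^t$ at $\textbf{U}_0$, I would introduce, for $|\eta|$ small, $\textbf{U}_\eta(t):=\Phi^t(\textbf{U}_0+\eta\textbf{V}_0)$ (defined on a common time interval thanks to Theorem \ref{existence_saut_xu}) and the remainder $R_\eta(t):=\eta^{-1}(\textbf{U}_\eta(t)-\textbf{U}(t))-\dot{\textbf{U}}(t)$. Subtracting the equations satisfied by $\textbf{U}_\eta$, $\textbf{U}$ and $\dot{\textbf{U}}$, and Taylor-expanding the nonlinearities to first order in $\eta$, one sees that $R_\eta$ solves a linear system of the same type as that satisfied by $\dot{\textbf{U}}$, with $R_\eta(0)=0$ and a source term of size $O(\eta)$ measured in a norm one derivative below $E^N_\mu$. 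Re-running the linear energy estimate at one level of regularity lower then yields $R_\eta\to 0$ as $\eta\to 0$, which simultaneously establishes the existence of $(\Phi^t)'(\textbf{U}_0)\cdot\textbf{V}_0$ and confirms the announced estimate.

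The hard part will be the operator $\mathcal{C}[\textbf{U}]$: because it is not compensated by the symmetrizer $\mathcal{S}$, it cannot be absorbed inside the quadratic form $(\mathcal{S}\cdot,\cdot)$ and must be handled as a forcing term, which is exactly what forces the regularity gap between $\textbf{U}_0$ and $\textbf{V}_0$ in the statement. Redoing, for the linearized system, the commutator and product estimates of \cite{saut_xu} carefully enough to keep track of this one-derivative loss will be the technical heart of the proof.
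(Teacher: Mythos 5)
Your proposal follows essentially the same route as the paper: both identify $\left(\Phi^{t}\right)'(\textbf{U}_{0})\cdot\textbf{V}_{0}$ with the solution of the system linearized along $\Phi^{t}(\textbf{U}_{0})$, reuse the symmetrizer $\mathcal{S}$ and the energy machinery of Theorem \ref{existence_saut_xu}, and isolate the coefficient-differentiated terms (your $\mathcal{C}[\textbf{U}]$, the paper's $\mathcal{B}[\textbf{V}]\partial_{x}^{\alpha}\textbf{U}$ term $III$) as the source of the one-derivative gap between $\textbf{U}_{0}$ and $\textbf{V}_{0}$, closing with Gronwall. Your additional $R_{\eta}$ argument justifying that this solution really is the Fr\'echet differential is a point the paper leaves implicit, but it does not change the substance of the proof.
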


\begin{proof}
\noindent We denote by $\textbf{U}(t) = \left( \zeta(t), v(t) \right)$ the solution of the Saut-Xu system \eqref{saut-xu-deep} with initial data $\textbf{U}_{0}$. We denote also $\left(\eta(t), w(t) \right) = \left(\Phi^{t} \right)' \left( \textbf{U}_{0} \right) \cdot \left(\textbf{V}_{0} \right)$. Then, $\left(\eta, w \right)$ satisfy the following system

\begin{equation}\label{saut_xu_linearise}
\partial_{t} \begin{pmatrix} \eta \\ w \end{pmatrix} + \mathcal{L} \begin{pmatrix} \eta \\ w \end{pmatrix} + \epsilon \sqrt{\mu} \mathcal{N}[(\zeta,v)] \partial_{x} \begin{pmatrix} \eta \\ w \end{pmatrix} + \epsilon \sqrt{\mu} \mathcal{N}[(\eta,w)] \partial_{x} \begin{pmatrix} \zeta \\ v \end{pmatrix} = \beta \sqrt{\mu} \left(\partial_{x} \left(B_{\mu} w \right), 0 \right)^{t},
\end{equation}

\noindent where

\begin{equation*}
\mathcal{L} = \begin{pmatrix} 0 & - \frac{1}{\sqrt{\mu} \nu} \Hmu \\ \partial_{x} & 0 \end{pmatrix} \quad \text{   and   } \quad \mathcal{N}[(\zeta,v)] \hspace{-0.05cm} = \hspace{-0.05cm} \begin{pmatrix} \frac{1}{2} \Hmu \left(v \Hmu \cdot \right) + \frac{1}{2} v  & \Hmu \left(\zeta \Hmu \cdot \right)  + \zeta \\ - \frac{1}{2} \partial_{x} \zeta \Hmu & \frac{3}{2} v - \frac{1}{2} v \Hmu^{2} \end{pmatrix} \hspace{-0.05cm}.
\end{equation*}

\begin{equation*}
\end{equation*}

\noindent For $0 \leq \alpha \leq N$, we denote $\textbf{V}^{(\alpha)} = \left(\partial_{x}^{\alpha} \eta, \partial_{x}^{\alpha} w \right)$. Then, applying $\partial_{x}^{\alpha}$ to System \eqref{saut_xu_linearise}, we get

\begin{equation*}
\partial_{t} \textbf{V}^{(\alpha)} \hspace{-0.05cm} + \hspace{-0.05cm} \mathcal{L} \textbf{V}^{(\alpha)} \hspace{-0.05cm} + \hspace{-0.05cm} \frac{\epsilon \sqrt{\mu}}{2}  \mathds{1}_{\left\{ \alpha \neq 0 \right\}} \left( \mathcal{B}[\textbf{U}] \textbf{V}^{(\alpha)} + \mathcal{B}[\textbf{V}] \partial_{x}^{\alpha} \textbf{U} \right) = \sqrt{\mu} \beta \begin{pmatrix}  \partial_{x} \partial_{x}^{\alpha} \left(B_{\mu} w \right) \\ 0 \end{pmatrix} + \epsilon \sqrt{\mu} \color{black} \mathcal{J}^{\alpha} \color{black},
\end{equation*} 

\noindent where 

\begin{equation*}
\begin{aligned}
& \mathcal{B}[\textbf{U}] = \begin{pmatrix} \Hmu \left( v  \Hmu \partial_{x} \; \cdot \;  \right) + v \partial_{x} & \Hmu \left(  \; \cdot \; \Hmu \partial_{x} \zeta \right) - \partial_{x} \zeta \Hmu^{2} \\ -  \partial_{x} \zeta \Hmu \partial_{x} -  \Hmu \partial_{x} \zeta \partial_{x} & 3 v \partial_{x} - v \Hmu^{2} \partial_{x} \end{pmatrix},\\
&\color{black} \mathcal{J}^{\alpha} = - \partial_{x}^{\alpha} \left( \mathcal{N}[(\zeta,v)] \partial_{x} \begin{pmatrix} \eta \\ w \end{pmatrix} + \mathcal{N}[(\eta,w)] \partial_{x} \begin{pmatrix} \zeta \\ v \end{pmatrix} \right) + \frac{1}{2} \left( \mathcal{B}[\textbf{U}] \textbf{V}^{(\alpha)} + \mathcal{B}[\textbf{V}] \partial_{x}^{\alpha} \textbf{U} \right). \color{black}
\end{aligned}
\end{equation*}

\noindent Then, we can show, as in Paragraph IV. B in \cite{saut_xu}, that

\begin{equation}\label{control_J}
\lver \color{black} \mathcal{J}^{\alpha} \color{black} \rver_{2} + \lver \lver D \rver^{\frac{1}{2}} \color{black} \mathcal{J}^{\alpha} \color{black} \rver_{2} \leq \epsilon \sqrt{\mu} C \left(\frac{1}{\mu_{\min}} \right) \mathcal{E}^{N} \! \left( \textcolor{black}{\textbf{V}} \right).
\end{equation}

\noindent We recall that we can symmetrize $\mathcal{L}$ thanks to 

\begin{equation*}
\mathcal{S} = \begin{pmatrix} \frac{D}{\tanh(\sqrt{\mu} D)} & 0 \\ 0 & 1 \end{pmatrix}.
\end{equation*}

\noindent We define the energy associated to this symmetrizer

\begin{equation*}
F^{\alpha} \left( \textbf{V} \right) = \lver \sqrt{\frac{D}{\tanh(\sqrt{\mu} D)}} \partial_{x}^{\alpha} \eta \rver^{2}_{2} + \lver \partial_{x}^{\alpha} w \rver^{2}_{2}, \quad \text{   and   } \quad F^{N} \left( \textbf{V} \right) = \underset{0 \leq \alpha \leq N}{\sum} F^{\alpha} \left( \textbf{V} \right).
\end{equation*}

\noindent We have, for $\alpha \neq 0$,

\small
\begin{equation*}
\begin{aligned}
\frac{d}{dt} F^{\alpha} \left( \textbf{V} \right) &= \epsilon \sqrt{\mu} \left(\color{black} \mathcal{J}^{\alpha} \color{black}, \mathcal{S} \textbf{V}^{(\alpha)} \right) - \frac{\epsilon \sqrt{\mu}}{2} \left( \left( \mathcal{B}[\textbf{U}] \textbf{V}^{(\alpha)},\mathcal{S} \textbf{V}^{(\alpha)} \right) + \left( \mathcal{B}[\textbf{V}] \partial_{x}^{\alpha} \textbf{U},\mathcal{S} \textbf{V}^{(\alpha)} \right) \right)\\
&\hspace{7.5cm} + \beta \sqrt{\mu} \left(\partial_{x} \partial_{x}^{\alpha} \left(B_{\mu} v \right),\mathcal{S} \textbf{V}^{(\alpha)}   \right)\\
&= I + II + III + IIII.
\end{aligned}
\end{equation*}
\normalsize

\noindent We can estimate \textit{I} thanks to estimate \eqref{control_J} and \textit{II} as in Paragraph IV. B in \cite{saut_xu}. For \textit{IIII}, we can proceed as in the previous theorem. For \textit{III}, we get, thanks to Proposition \ref{control_Hmu},

\begin{equation*}
\lver III \rver \leq \epsilon \sqrt{\mu} \lver \left( \zeta, v \right) \rver_{H^{N+1+\frac{1}{2}} \times H^{N+1}} \lver \left( \eta, w \right) \rver_{H^{N+\frac{1}{2}} \times H^{N}} 
\end{equation*}

\noindent Then, we obtain

\begin{equation*}
\frac{d}{dt} F^{N} \left( \textbf{V} \right) \leq \delta \sqrt{\mu} C(M) \left( F^{N} \left( \textbf{V} \right) + \sqrt{F^{N} \left(\textbf{V} \right)} \right),
\end{equation*}

\noindent and the result follows.

\end{proof}

\begin{prop}\label{lip_saut_xu}
Let $N \geq 2$, \upshape$\textbf{U}_{0}, \textbf{V}_{0} \in H^{N+1+\frac{1}{2}} \times H^{N+1} \left(\R \right)$ \itshape and \upshape $b \in L^{\infty} \left(\R \right)$\itshape. We assume that $\epsilon, \beta, \mu$ satisfy Condition \eqref{parameters_constraints} and 

\upshape
\begin{equation*}
\lver \textbf{V}_{0} \rver_{H^{N+1+\frac{1}{2}} \times H^{N+1}} + \lver \textbf{U}_{0} \rver_{H^{N+1+\frac{1}{2}} \times H^{N+1}} + \lver b \rver_{L^{\infty}} \leq M.
\end{equation*}
\itshape

\noindent Then, there exists a time $T$ independent of $\epsilon$, $\mu$ and $\beta$ and two unique solutions \upshape$\textbf{U}, \textbf{V}$ \itshape of the system \eqref{saut-xu-deep} on $\left[0, \frac{T}{\delta} \right]$ with initial data \upshape$\textbf{U}_{0}$ \itshape and \upshape$\textbf{V}_{0}$\itshape. Furthermore, we have the following Lipschitz estimate, for all $0 \leq t \leq \frac{T}{\delta}$,

\upshape
\begin{equation}\label{ConstLip}
\lver \textbf{U}(t, \cdot) -  \textbf{V}(t, \cdot) \rver_{H^{N+\frac{1}{2}} \times H^{N}} \leq  K \lver \textbf{U}_{0} -  \textbf{V}_{0} \rver_{H^{N+\frac{1}{2}} \times H^{N}},
\end{equation}
\itshape

\noindent where $K = C \left(\frac{1}{\mu_{\min}}, \mu_{\max}, M \right)$.
\end{prop}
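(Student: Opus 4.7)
The plan is to derive the Lipschitz estimate directly from Proposition \ref{lip_saut_diff} by integrating the differential of the flow along the segment joining $\textbf{U}_0$ and $\textbf{V}_0$. First, I would apply Theorem \ref{existence_saut_xu} to obtain existence and uniqueness of $\textbf{U}(t)=\Phi^t(\textbf{U}_0)$ and $\textbf{V}(t)=\Phi^t(\textbf{V}_0)$ on a common interval $\left[0, T/\delta\right]$ with $T=T(M, \mu_{\min}^{-1}, \mu_{\max})$, using only the (weaker) bound in $H^{N+\frac{1}{2}} \times H^N$ implied by the hypothesis.

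Next, I would introduce the convex interpolant $\textbf{W}_{0,\theta}=\theta \textbf{U}_0+(1-\theta)\textbf{V}_0$ for $\theta\in[0,1]$. By the triangle inequality $\lver \textbf{W}_{0,\theta} \rver_{H^{N+1+\frac{1}{2}} \times H^{N+1}} \leq M$ uniformly in $\theta$, so Proposition \ref{lip_saut_diff} applies with base point $\textbf{W}_{0,\theta}$ and perturbation $\textbf{U}_0 - \textbf{V}_0 \in H^{N+\frac{1}{2}}\times H^N$, and the existence time $T/\delta$ for the linearized flow is uniform in $\theta$. The fundamental theorem of calculus in Banach spaces then yields
\[
\textbf{U}(t) - \textbf{V}(t) = \int_0^1 \left(\Phi^t\right)'(\textbf{W}_{0,\theta})\cdot(\textbf{U}_0 - \textbf{V}_0)\, d\theta,
\]
and integrating the bound of Proposition \ref{lip_saut_diff} over $\theta$ produces \eqref{ConstLip} with $K = C(\mu_{\min}^{-1}, \mu_{\max}, M)$.

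The main obstacle is rigorously justifying that $\textbf{U}_0 \mapsto \Phi^t(\textbf{U}_0)$ is $\mathcal{C}^1$, with G\^ateaux derivative actually given by the solution of the linearized system \eqref{saut_xu_linearise} and with continuous dependence on the base point on a uniform time interval. One way is a finite-difference argument: study $\left(\Phi^t(\textbf{U}_0+h\textbf{V}_0)-\Phi^t(\textbf{U}_0)\right)/h$, show by the energy method of Theorem \ref{existence_saut_xu} that it stays bounded in $H^{N+\frac{1}{2}}\times H^N$ uniformly in $h$, and pass to the limit using the linearized equation. Alternatively, and perhaps more directly, one may bypass differentiability and subtract the two copies of \eqref{saut-xu-deep} satisfied by $\textbf{U}$ and $\textbf{V}$: this produces a linear system for $\textbf{U}-\textbf{V}$ of the same structure as \eqref{saut_xu_linearise}, with source terms bilinear in $\textbf{U}-\textbf{V}$ and $(\textbf{U},\textbf{V})$, on which one runs exactly the symmetrized energy estimate of Proposition \ref{lip_saut_diff} using the symmetrizer $\mathcal{S}$ from \eqref{symmetrizer}.

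The unavoidable half-derivative loss on $\zeta$ in this quasilinear energy method is precisely what forces the initial data to be taken in $H^{N+1+\frac{1}{2}} \times H^{N+1}$ while the Lipschitz estimate is measured at the lower regularity $H^{N+\frac{1}{2}} \times H^N$: the problematic term $\mathcal{B}[\textbf{V}]\partial_x^\alpha\textbf{U}$ (or its analogue in the difference equation) can then be absorbed using the extra regularity available on the base solutions, while the remaining commutator estimates are handled exactly as in \cite{saut_xu} and in the proof of Proposition \ref{lip_saut_diff}. A Gronwall argument on the energy $F^N$ introduced there closes the estimate on the whole interval $\left[0,T/\delta\right]$.
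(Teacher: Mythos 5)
Your proposal follows essentially the same route as the paper: existence from Theorem \ref{existence_saut_xu}, then the fundamental theorem of calculus along the segment $\theta\mapsto\theta\textbf{U}_0+(1-\theta)\textbf{V}_0$ combined with the bound on $(\Phi^t)'$ from Proposition \ref{lip_saut_diff}. Your additional remarks on rigorously justifying the differentiability of the flow (and the alternative of estimating the difference equation directly) go beyond what the paper writes, but the core argument is identical.
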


\begin{proof}
\noindent The existence of $\textbf{U}, \textbf{V}$ and $T$ follow from the previous theorem. Furthermore, we have

\begin{equation*}
\textbf{U}(t) - \textbf{V}(t) = \int_{s=0}^{1} \left(\Phi^{t} \right)' \left(\textbf{V}_{0} + s \left(\textbf{U}_{0} - \textbf{V}_{0} \right) \right) \cdot \left(\textbf{U}_{0} - \textbf{V}_{0} \right).
\end{equation*}

\noindent The result follows from Proposition \ref{lip_saut_diff}.
\end{proof}

\section{A splitting scheme}\label{proof_splitting}

In this section, we  split the  Saut-Xu system  \eqref{saut-xu-deep} and we give some estimates for the sub-problems. We consider, separately, the transport part  

\begin{equation}\label{transport_part}
\left\{
\begin{array}{l}
\partial_{t} \zeta +\frac{\epsilon \sqrt{\mu}}{2} \left( \left(\Hmu^{2} + 1 \right) v \right) \partial_{x} \zeta = 0\\
\partial_{t} v + \frac{3 \epsilon \sqrt{\mu}}{2} v \partial_{x} v = 0,
\end{array}
\right.
\end{equation}

\noindent and the dispersive part

\begin{equation}\label{dispersive_part}
\left\{
\begin{array}{l}
\hspace{-0.1cm} \partial_{t} \zeta \hspace{-0.05cm} - \hspace{-0.05cm} \Hmu v \hspace{-0.05cm} + \hspace{-0.05cm} \epsilon \sqrt{\mu} \left( \frac{1}{2} \Hmu \left(v \partial_{x} \Hmu \zeta \right) \hspace{-0.05cm} + \hspace{-0.05cm} \Hmu \left( \zeta \partial_{x} \Hmu v \right) \hspace{-0.05cm} + \hspace{-0.05cm} \zeta \partial_{x} v \hspace{-0.05cm} - \hspace{-0.05cm} \frac{1}{2} \partial_{x} \zeta \Hmu^{2} v \right) = \beta \sqrt{\mu} \partial_{x} \left(B_{\mu} v \right)\\
\hspace{-0.1cm} \partial_{t} v \hspace{-0.05cm} + \hspace{-0.05cm} \partial_{x} \zeta \hspace{-0.05cm} - \hspace{-0.05cm} \frac{\epsilon \sqrt{\mu}}{2} \partial_{x} \zeta \Hmu \partial_{x} \zeta - \frac{\epsilon \sqrt{\mu}}{2} v \Hmu^{2} \partial_{x} v = 0. \\
\end{array}
\right.
\end{equation}

\noindent We denote by $\Phi_{\mathcal{A}}^{t}$ the flow of System \eqref{transport_part} and by $\Phi_{\mathcal{D}}^{t}$ the flow of System \eqref{dispersive_part}.

\begin{remark}
\label{Tdecomposition}
\noindent Notice that we keep the term $\zeta \partial_{x} v$ in the first equation and we decompose $v \partial_{x} \zeta$ as $v \partial_{x} \zeta = \partial_{x} \zeta \left(\Hmu^{2} +1 \right)v - \partial_{x} \zeta \Hmu^{2} v$ . This will be useful for the local wellposedness of the dispersive part. 
\end{remark}

\noindent In the following, we prove the local existence on large time for Systems \eqref{transport_part} and \eqref{dispersive_part}.

\subsection{The transport equation}

\noindent The system \eqref{transport_part} is a transport equation. Then, it is easy to get the following result.

\begin{prop}\label{existence_transport}
Let $s_{1} \geq 0$, $s_{2} > \frac{3}{2}$ and $M > 0$. We assume that $\epsilon, \mu$ satisfies Condition \eqref{parameters_constraints}. Then, there exists a time $T_{1} = T_{1} \left(M,\mu_{\max} \right) >0$, such that if 

\upshape
\begin{equation*}
\lver \zeta_{0} \rver_{H^{s_{1}}} + \lver v_{0} \rver_{H^{s_{2}}} \leq M,
\end{equation*}
\itshape

\noindent we have a unique solution $\left(\zeta, v \right) \in \mathcal{C} \left(\left[0, \frac{T_{1}}{\epsilon} \right], H^{s_{1}}(\R) \times H^{s_{2}}(\R) \right)$, to System \eqref{transport_part} with initial data $\left(\zeta_{0}, v_{0} \right)$. Furthermore, we have, for all $t\leq \frac{T_{1}}{\epsilon}$,

\begin{equation}\label{estim_int1}
\lver \zeta(t,\cdot) \rver_{H^{s_{1}}} + \lver v(t,\cdot) \rver_{H^{s_{2}}} \leq C(M,\mu_{\max}).
\end{equation}

\noindent Finally, if $s_{2} \geq 4$ and $M_{1} = \underset{0 \leq t \leq \frac{T_{1}}{\epsilon}}{\max} \lver v(t,\cdot) \rver_{H^{s_{2}-2}}$, then for all $t\leq \frac{T_{1}}{\epsilon}$, we have

\upshape
\begin{equation}\label{borne1}
\lver \zeta(t,\cdot) \rver_{H^{s_{1}}} + \lver v(t,\cdot) \rver_{H^{s_{2}}} \leq e^{\epsilon C_{1} t} |\Ub_0|_{H^{s_1}\times H^{s_2}}, 
\end{equation}
\itshape

\noindent where $C_{1}>0$ depends on $M_{1}$ and $\mu_{\max}$.
\end{prop}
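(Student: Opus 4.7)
The plan is to exploit the fact that system \eqref{transport_part} decouples: the second equation is a rescaled Burgers equation for $v$ alone, and once $v$ is known, the first equation becomes the linear transport equation $\partial_{t}\zeta + a(t,x)\partial_{x}\zeta = 0$ with $a = \frac{\epsilon\sqrt{\mu}}{2}(\Hmu^{2}+1)v$. The key structural observation is that $\Hmu^{2}+1$ is a zero-order Fourier multiplier with symbol $1-\tanh^{2}(\sqrt{\mu}\xi) = \sech^{2}(\sqrt{\mu}\xi)$, hence bounded on every $H^{s}(\R)$ uniformly in $\mu \leq \mu_{\max}$, so $a$ inherits the regularity of $v$.

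\textbf{Burgers step for $v$.} Applying $\Lambda^{s_{2}}$ to the second equation and using Kato--Ponce commutator estimates, one obtains the classical inequality
\begin{equation*}
\frac{d}{dt}|v|_{H^{s_{2}}}^{2} \leq C\epsilon\sqrt{\mu}\,|\partial_{x} v|_{L^{\infty}}|v|_{H^{s_{2}}}^{2}.
\end{equation*}
Since $s_{2}>\tfrac{3}{2}$, Sobolev embedding gives $|\partial_{x} v|_{L^{\infty}} \leq C|v|_{H^{s_{2}}}$, and a standard continuation/bootstrap argument yields a time $T_{1}=T_{1}(M,\mu_{\max})$ on which $|v(t,\cdot)|_{H^{s_{2}}}$ stays comparable to its initial value, giving \eqref{estim_int1} for $v$. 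Uniqueness follows by estimating the difference at one derivative lower.

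\textbf{Transport step for $\zeta$.} With $v$ fixed, the boundedness of $\Hmu^{2}+1$ and Sobolev embedding yield $|\partial_{x} a|_{L^{\infty}} \leq C\epsilon\sqrt{\mu}\,|v|_{H^{s_{2}}}$, uniformly in $t\in[0,T_{1}/\epsilon]$. Energy estimates for the linear transport equation (again with a Kato--Ponce commutator to handle non-integer $s_{1}\geq 0$) give
\begin{equation*}
\frac{d}{dt}|\zeta|_{H^{s_{1}}}^{2}\leq C|\partial_{x} a|_{L^{\infty}}|\zeta|_{H^{s_{1}}}^{2},
\end{equation*}
so that Gronwall completes the bound \eqref{estim_int1} for $\zeta$. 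Existence is obtained by a standard mollification scheme applied to this linear equation, together with compactness in $t$ and the uniform $H^{s_{1}}$ bound above; uniqueness follows from the same energy inequality.

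For the refined estimate \eqref{borne1}, the hypothesis $s_{2}\geq 4$ gives $s_{2}-2>\tfrac{3}{2}$, so Sobolev embedding yields $|\partial_{x} v|_{L^{\infty}}\leq C|v|_{H^{s_{2}-2}}\leq CM_{1}$ and correspondingly $|\partial_{x} a|_{L^{\infty}}\leq C\epsilon\sqrt{\mu}\,M_{1}$ throughout $[0,T_{1}/\epsilon]$. The two energy inequalities above then become linear in the top-order norm, with coefficient $\epsilon C_{1}(M_{1},\mu_{\max})$, and Gronwall produces the claimed exponential estimate. The main subtlety to watch is precisely this shift: the bound used in the Burgers step is Riccati-type (hence only local in time through a bootstrap), whereas the a priori knowledge of $M_{1}$ in the refined step linearises the ODE and is exactly what allows the controlled exponential growth $e^{\epsilon C_{1} t}$ in place of a finite-time blow-up bound.
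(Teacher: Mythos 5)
Your strategy is sound and rests on the same tools as the paper's proof (commutator estimates plus Gr\"onwall, and the identical lower-norm trick for \eqref{borne1}), but it is organized differently. The paper treats \eqref{transport_part} as a single symmetric quasilinear system and runs one energy estimate on $\lvert\zeta\rvert_{H^{s_1}}^2+\lvert v\rvert_{H^{s_2}}^2$ via the Coifman--Meyer estimate (Proposition \ref{coifmanmeyer}), obtaining the Riccati inequality $\tfrac{d}{dt}E\le C\epsilon\sqrt{\mu}\,E^{3/2}$ for \eqref{estim_int1} and then, for \eqref{borne1}, the linearized version with coefficient $\lvert v\rvert_{H^{s_2-2}}$ exactly as in your last paragraph. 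Your decoupling --- Burgers for $v$ first, then a linear transport equation for $\zeta$ --- is a legitimate and arguably cleaner route to existence, since the $\zeta$-equation becomes linear once $v$ is constructed; the paper does not exploit this.

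The one step you must repair is the energy inequality for $\zeta$. For $s_1>1$ the commutator $[\Lambda^{s_1},a]\partial_x\zeta$ is \emph{not} controlled by $\lvert\partial_x a\rvert_{L^\infty}\lvert\zeta\rvert_{H^{s_1}}$ alone: Kato--Ponce (or Proposition \ref{coifmanmeyer}) produces an additional term of the type $\lvert a\rvert_{H^{s_1}}\lvert\partial_x\zeta\rvert_{L^\infty}$, or requires control of $\lvert\partial_x a\rvert_{H^{\max(t_0,\,s_1-1)}}$ with $t_0>1/2$. Since $s_1\ge 0$ and $s_2>\tfrac32$ are unrelated in the statement ($s_1$ may greatly exceed $s_2$), these higher norms of $a$ are not reachable from $\lvert v\rvert_{H^{s_2}}$ using only the \emph{boundedness} of $\Hmu^2+1$ on each $H^s$, which is all you invoke. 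The fix is precisely the point of Remark \ref{Tdecomposition}: the symbol $\sech^2(\sqrt{\mu}\,\xi)$ of $\Hmu^2+1$ decays exponentially, so by the second part of Proposition \ref{control_Hmu} one has $\lvert(\Hmu^2+1)v\rvert_{H^{s}}\le C\left(\tfrac{1}{\mu_{\min}}\right)\lvert v\rvert_{L^2}$ for every $s\ge 0$; hence the coefficient $a$ is automatically as smooth as needed, with all its norms controlled by a low norm of $v$ (by $M_1$ in the setting of \eqref{borne1}). Once you invoke this smoothing explicitly --- accepting constants that depend on $\mu_{\min}$, which is consistent with Condition \eqref{parameters_constraints} --- your argument closes and yields both \eqref{estim_int1} and \eqref{borne1}.
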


\begin{proof}

\noindent The proof follows from the fact that the quasilinear system \eqref{transport_part} is symmetric. Thanks to the Coifman-Meyer estimate (see Proposition \ref{coifmanmeyer}), we get

\begin{equation*}
\frac{d}{dt} \left(\lver \zeta(t,\cdot) \rver_{H^{s_{1}}}^{2} + \lver v(t,\cdot) \rver_{H^{s_{2}}}^{2} \right) \leq C \epsilon \sqrt{\mu} \left( \lver \zeta(t,\cdot) \rver_{H^{s_{1}}}^{2} + \lver v(t,\cdot) \rver_{H^{s_{2}}}^{2} \right)^{\frac{3}{2}}.
\end{equation*}

\noindent Then, we see that the energy is bounded uniformly with respect to $\epsilon$ and $\mu$ and we get Estimate \eqref{estim_int1}. For the second estimate, using the same trick that in Lemma 3.1 in \cite{HoldenLR13}, we notice that, if $s_{1} \geq 4$, 

\begin{equation*}
\frac{d}{dt} \left(\lver \zeta(t,\cdot) \rver_{H^{s_{1}}}^{2} + \lver v(t,\cdot) \rver_{H^{s_{2}}}^{2} \right) \leq \epsilon \sqrt{\mu}  \lver v(t,\cdot) \rver_{H^{s_{2}-2}} \left( \lver \zeta(t,\cdot) \rver_{H^{s_{1}}}^{2} + \lver v(t,\cdot) \rver_{H^{s_{2}}}^{2} \right).
\end{equation*}

\noindent By applying the Gronwall lemma, we get the result.
\end{proof}

\subsection{The dispersive equation}

\medskip

\noindent The system \eqref{dispersive_part} contains all the dispersive terms of the Saut-Xu system. We have the following estimate for the flow. 

\begin{prop}\label{existence_dispersive}
Let $N \geq 2$, and \upshape $b \in L^{\infty}(\R)$\itshape. We assume that $\epsilon, \beta, \mu$ satisfy Condition \eqref{parameters_constraints}. Then,  there exists a time $T_{2} = T_{2} \left(M, \frac{1}{\mu_{\min}}, \mu_{\max}  \right)$ such that if

\begin{equation*}
\lver \zeta_{0} \rver_{H^{N+\frac{1}{2}}} + \lver v_{0} \rver_{H^{N}} + \lver b \rver_{L^{\infty}} \leq M,
\end{equation*}

\noindent we have a unique solution $\left(\zeta,v \right) \in \mathcal{C} \left(\left[0, \frac{T_{2}}{\delta} \right], H^{N+\frac{1}{2}} (\R) \times H^{N}(\R) \right)$ \itshape to the system \eqref{dispersive_part} with initial data \upshape$\left(\zeta_{0},v_{0} \right)$\itshape. Furthermore, we have, for all $t \leq \frac{T_{2}}{\delta}$,

\upshape
\begin{equation}\label{estim_int2}
\lver \zeta(t, \cdot) \rver_{H^{N+\frac{1}{2}}} + \lver v(t,\cdot) \rver_{H^{N}} \leq C \left( M,\mu_{\max}, \frac{1}{\mu_{\min}} \right).
\end{equation}
\itshape 

\noindent Finally, if \textcolor{black}{$N \geq 7$}, and

\upshape
\begin{equation*}
M_{1} = \underset{0 \leq t \leq \frac{T_{2}}{\delta}}{\max} \left( \lver \zeta(t, \cdot) \rver_{H^{N+\frac{1}{2}-2}} + \lver v(t,\cdot) \rver_{H^{N-2}} \right),
\end{equation*}
\itshape

\noindent then for all $t\leq \frac{T_{2}}{\delta}$, we have

\upshape
\begin{equation}\label{borne2}
\lver \zeta(t, \cdot) \rver_{H^{N+\frac{1}{2}}} + \lver v(t,\cdot) \rver_{H^{N}} \leq e^{\delta C_{2} t} |\Ub_0|_{H^{N+1/2} \times H^N },
\end{equation}
\itshape

\noindent where $C_{2}$ is a positive constant which depends on $\mu_{\max}, \frac{1}{\mu_{\min}}, M_{1}$.  
\end{prop}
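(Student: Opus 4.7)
The plan is to mirror the proof of Theorem \ref{existence_saut_xu}, restricting the matrix-valued operator to the dispersive terms. For $0\le\alpha\le N$, set $\textbf{U}^{(\alpha)}=(\partial_x^\alpha\zeta,\partial_x^\alpha v)^{t}$; applying $\partial_x^\alpha$ to \eqref{dispersive_part} puts the system in the quasilinear form
\begin{equation*}
\partial_t \textbf{U}^{(\alpha)} + \mathcal{L}\textbf{U}^{(\alpha)} + \tfrac{\epsilon\sqrt{\mu}}{2}\mathds{1}_{\{\alpha\ne 0\}}\widetilde{\mathcal{B}}[\textbf{U}]\textbf{U}^{(\alpha)} = \beta\sqrt{\mu}\bigl(\partial_x\partial_x^\alpha(B_\mu v),0\bigr)^{t} + \epsilon\sqrt{\mu}\,\widetilde{\mathcal{G}}^\alpha,
\end{equation*}
where $\widetilde{\mathcal{B}}[\textbf{U}]$ is obtained from the matrix $\mathcal{B}[\textbf{U}]$ of Theorem \ref{existence_saut_xu} by subtracting the two first-order transport blocks carried by \eqref{transport_part}, the resulting bounded commutators being absorbed into $\widetilde{\mathcal{G}}^\alpha$. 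The decomposition $v\partial_x\zeta=\partial_x\zeta(\mathcal{H}_\mu^2+1)v-\partial_x\zeta\mathcal{H}_\mu^2 v$ of Remark \ref{Tdecomposition} is precisely what ensures that the (1,1) subtraction is a scalar transport symbol and that the surviving (1,1) block of $\widetilde{\mathcal{B}}$ admits the same symmetrization against $\mathcal{S}$ from \eqref{symmetrizer} as in \cite{saut_xu}. Reproducing the estimates \eqref{estim_1}--\eqref{estim_3} (the bottom contribution \eqref{estim_3} is unchanged) then yields
\begin{equation*}
\mathcal{E}^N(\textbf{U}) \le \mathcal{E}^N(\textbf{U}_0) + \delta\,C\Bigl(\tfrac{1}{\mu_{\min}},\mu_{\max}\Bigr)\int_0^t\bigl(\mathcal{E}^N(\textbf{U})^{3/2}+\mathcal{E}^N(\textbf{U})\bigr)(s)\,ds,
\end{equation*}
from which a standard continuity argument produces existence on a uniform interval $[0,T_2/\delta]$ and the uniform bound \eqref{estim_int2}; uniqueness and the Lipschitz dependence on initial data are obtained exactly as in Propositions \ref{lip_saut_diff} and \ref{lip_saut_xu}.

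For the refined estimate \eqref{borne2}, I would follow the strategy of Lemma 3.1 of \cite{HoldenLR13} and reopen the differentiated energy identity obtained by pairing the equation for $\textbf{U}^{(\alpha)}$ with $\mathcal{S}\textbf{U}^{(\alpha)}$. The idea is to systematically integrate by parts in every term where $\partial_x^\alpha$ lands on a coefficient ($\zeta$ or $v$) rather than on the highest-derivative factor: combined with Kato--Ponce type commutator estimates and Proposition \ref{control_Hmu} for the nonlocal pieces, each such term can then be bounded by $C\bigl(\lver\zeta\rver_{W^{k,\infty}}+\lver v\rver_{W^{k,\infty}}\bigr)\mathcal{E}^N(\textbf{U})$ with some fixed $k\le 4$. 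The one-dimensional Sobolev embedding $H^{1/2+\eta}\hookrightarrow L^\infty$ then controls these low-order norms by $M_1$ as soon as $N-2>k+1/2$, i.e.\ $N\ge 7$, which explains the threshold in the statement. Summing over $\alpha\le N$ gives
\begin{equation*}
\tfrac{d}{dt}\mathcal{E}^N(\textbf{U}) \le \delta\,C\Bigl(M_1,\tfrac{1}{\mu_{\min}},\mu_{\max}\Bigr)\mathcal{E}^N(\textbf{U}),
\end{equation*}
and \eqref{borne2} follows from the Gronwall lemma.

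The main obstacle will be the symmetrization of the principal part of $\widetilde{\mathcal{B}}[\textbf{U}]$: its (2,1) block $-\partial_x\zeta\,\mathcal{H}_\mu\partial_x-\mathcal{H}_\mu\partial_x\zeta\,\partial_x$ naively involves a top-order derivative of $\zeta$ when paired against $\partial_x^\alpha v$ through $\mathcal{S}$. As in \cite{saut_xu}, one must expand $[\mathcal{H}_\mu\partial_x,\partial_x\zeta]$ and identify the two antisymmetric principal contributions that cancel, leaving a remainder of lower order in $\zeta$ which can be absorbed in a constant depending only on $M_1$ (and not on $\mathcal{E}^N(\textbf{U})^{1/2}$). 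Once this cancellation is carefully verified, the bottom contribution from $B_\mu v$ introduces no new difficulty since $B_\mu$ is built from the zeroth-order regularizing operator $\sech(\sqrt{\mu}D)$ and $\lver b\rver_{L^\infty}\le M$.
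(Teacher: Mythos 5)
Your proposal is correct and follows essentially the same route as the paper: write the differentiated system in quasilinear form, reuse the symmetrizer $\mathcal{S}$ from Theorem \ref{existence_saut_xu} (the transport terms being treated separately, which is exactly what the decomposition of Remark \ref{Tdecomposition} enables) to get the $\mathcal{E}^N(\textbf{U})^{3/2}$ energy inequality and hence \eqref{estim_int2}, and then obtain \eqref{borne2} via the trick of Lemma 3.1 in \cite{HoldenLR13} by bounding every remainder term by a low-order norm controlled by $M_1$ before applying Gronwall. The only cosmetic difference is that the paper controls the remainder $\tilde{\mathcal{G}}^{N}$ by splitting the binomial sums at $\gamma=\lfloor N/2\rfloor$ and using the product estimate of Proposition \ref{productestim} (which is where the threshold $2+\lfloor N/2\rfloor\leq N-2$, i.e.\ $N\geq 7$, comes from), whereas you invoke Kato--Ponce commutators and $W^{k,\infty}$ Sobolev embeddings to reach the same threshold.
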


\begin{proof}
\noindent The proof is an adaptation of the proof of Theorem \ref{existence_saut_xu} and part IV in \cite{saut_xu}. We notice that, in the proof of Saut and Xu, the transport part can be treated separately and does not influence the control of the other terms. \textcolor{black}{Hence, we can use the same symmetrizer $\mathcal{S}$ that in Theorem \ref{existence_saut_xu} (see \eqref{symmetrizer})  and we get}

\begin{equation*}
\frac{d}{dt} \mathcal{E}^{N} \! \left(\zeta,v \right) \leq C \left(\frac{1}{\mu_{\min}} \right)  \left(\frac{\epsilon}{\nu} \mathcal{E}^{N} \! \left(\zeta,v \right)^{\frac{3}{2}} + \frac{\beta}{\nu} \mathcal{E}^{N} \! \left(\zeta,v \right) \right).
\end{equation*}

\noindent Then, by Remark \ref{explication_assumption_mu}, we get Estimate \eqref{estim_int2}. \color{black} Furthermore, we notice that we use the same trick as in Lemma 3.1 in \cite{HoldenLR13}. By keeping the same notations as in Theorem \ref{existence_saut_xu}, we get from Equations \eqref{estim_2} and \eqref{estim_3} that

\color{black}

\begin{equation*}
\frac{d}{dt} \mathcal{E}^{N} \! \left(\textbf{U} \right) \leq \delta C \left(\frac{1}{\mu_{\min}}, \mu_{\max} \right)  \left( \left\lvert \tilde{\mathcal{G}}^{N} \right\rvert_{ H^{1/2}\times L^{2}} + \sqrt{\mathcal{E}^{2} \left(\textbf{U} \right)} + \lver b \rver_{\infty} \right) \mathcal{E}^{N} \! \left(\textbf{U} \right)
\end{equation*}

\noindent where $\tilde{\mathcal{G}}^{N} = (\tilde{\mathcal{G}}^{N}_{1}, \tilde{\mathcal{G}}^{N}_{2})^{t}$ with

\small
\begin{equation*}
\begin{aligned}
&\tilde{\mathcal{G}}^{N}_{1} = - \partial_{x}^{N} \left( [\Hmu, \zeta] \Hmu \partial_{x} v + \zeta (\Hmu^{2}+1) \partial_{x} v \right)  -  \frac{1}{2} \underset{1 \leq \gamma \leq N-1}{\sum} \hspace{-0.4cm} C^{\gamma}_{N} \Hmu ( \partial_{x}^{\gamma} v \Hmu \partial_{x}^{1+N-\gamma} \zeta) -  \frac{1}{2} \partial_{x} \zeta (\Hmu^{2}+1) \partial_{x}^{N} v\\
&\tilde{\mathcal{G}}^{N}_{2} = \frac{1}{2} \underset{1 \leq \gamma \leq N-1}{\sum} \hspace{-0.3cm} C^{\gamma}_{N} \partial_{x}^{1+\gamma} \zeta \Hmu \partial_{x}^{1+N-\gamma} \zeta + \frac{1}{2}  \underset{1 \leq \gamma \leq N}{\sum} C^{\gamma}_{N} \partial_{x}^{\gamma} v \Hmu^{2} \partial_{x}^{1+N-\gamma} v.
\end{aligned}
\end{equation*}
\normalsize

\noindent To explain how we can adapt the trick used in Lemma 3.1 in \cite{HoldenLR13}, we focus our attention to one term. For $1 \leq \gamma \leq N-1$, we have to control $\left\lvert \partial_{x}^{1+\gamma} \zeta \Hmu \partial_{x}^{1+N-\gamma} \zeta \right\rvert_{L^2}$. If $\gamma \leq \lfloor \frac{N}{2} \rfloor$, we get from Propositions \ref{productestim} and \ref{control_Hmu} that

\begin{equation*}
\left\lvert \partial_{x}^{1+\gamma} \zeta \Hmu \partial_{x}^{1+N-\gamma} \zeta \right\rvert_{L^2} \leq \left\lvert \partial_{x}^{1+\gamma} \zeta \right\rvert_{H^{1}} \left\lvert \Hmu \partial_{x}^{1+N-\gamma} \zeta \right\rvert_{L^2} \leq C \left(\mu_{\max} \right) \left\lvert \zeta \right\rvert_{H^{2 + \lfloor \frac{N}{2} \rfloor}} \left\lvert \zeta \right\rvert_{H^{N}},
\end{equation*}

\noindent whereas if $\gamma > \lfloor \frac{N}{2} \rfloor$, we have

\begin{equation*}
\left\lvert \partial_{x}^{1+\gamma} \zeta \Hmu \partial_{x}^{1+N-\gamma} \zeta \right\rvert_{L^2} \leq \left\lvert \partial_{x}^{1+\gamma} \zeta \right\rvert_{L^2} \left\lvert \Hmu \partial_{x}^{1+N-\gamma} \zeta \right\rvert_{H^1} \leq C \left(\mu_{\max} \right) \left\lvert \zeta \right\rvert_{H^{N}} \left\lvert \zeta \right\rvert_{H^{2+ \lfloor \frac{N}{2} \rfloor}}.
\end{equation*}

\noindent We can mimic this method to control the other terms of $\tilde{\mathcal{G}}^{N}$ and, thanks to Propositions \ref{control_Hmu}, \ref{Hmu_commut} and \ref{productestim}, we obtain if $N \geq 7$ that

\begin{equation*}
\frac{d}{dt} \mathcal{E}^{N} \! \left(\zeta,v \right) \leq \delta C \left(\frac{1}{\mu_{\min}}, \mu_{\max} \right)  \left( \lver \zeta(t, \cdot) \rver_{H^{N+\frac{1}{2}-2}} + \lver v(t,\cdot) \rver_{H^{N-2}} + \left\lvert b \right\rvert_{L^{\infty}}  \right) \mathcal{E}^{N} \! \left(\zeta,v \right).
\end{equation*}
\color{black}

\noindent Then, Estimate \eqref{borne2} follows.
\end{proof}

\begin{remark}\label{rem_op_split}
\noindent Under the assumption of Proposition \ref{existence_dispersive} and if \textcolor{black}{$N \geq 7$}, we get from relations \eqref{borne1} and \eqref{borne2} that, there exists a time $T_{3}>0$, such that for all $t\in \left[0, \frac{T_{3}}{\delta} \right]$,
 
\upshape
\begin{equation*}
|\mathcal{Y}^t \Ub_0 |_{ H^{N+1/2}\times H^{N}} \leq e^{C_{3} \delta t} |\Ub_0|_{H^{N+1/2}\times H^{N}},
\end{equation*}
\itshape

\noindent where $C_{3} = C \left(\lver \Ub_0 \rver_{H^{N+1/2-2}\times H^{N-2}}, \mu_{\max}, \frac{1}{\mu_{\min}} \right)$ and $T_{3} = C \left( \lver \Ub_0 \rver_{H^{N+1/2}\times H^{N}}, \mu_{\max}, \frac{1}{\mu_{\min}} \right)$.
\end{remark}

\section{Error estimates}\label{mainsection}
\noindent The goal of this part is to prove the main result of this paper (Theorem \ref{thm_global_error}). Our analysis is based on energy estimates.

\subsection{The local error estimate}

\noindent The local error is the following quantity

\begin{equation}\label{localerror}
e \left(t, \textbf{U}_{0}  \right) = \Phi^{t}   \textbf{U}_{0} -  \mathcal{Y}^{t}  \textbf{U}_{0}.
\end{equation}

\noindent Our approach is similar to the one developed in \cite{split_chartier}. We use the fact that $\Phi^{t} \textbf{U}_{0}$ satisfies a symmetrizable system. Therefore, $e$ satisfies this system up to a remainder and then, we can control $e$ thanks to energy estimates. In the following we give different technical lemmas in order to control the local error. We recall that the transport operator is the operator $\mathcal{A}$

\begin{equation*}
\mathcal{A} \left(\zeta, v \right) = - \frac{\epsilon \sqrt{\mu}}{2} \begin{pmatrix} \left(\left(\Hmu^{2} + 1 \right) \! v \right) \partial_{x} \zeta \\ 3 v \partial_{x} v \end{pmatrix}.
\end{equation*}

\noindent The following proposition gives an estimate of the differential of the transport operator.

\begin{lemma}\label{Aprime}
Let $s_{1}, s_{2} \geq 0$ and $\epsilon, \mu$ satisfying Condition \eqref{parameters_constraints}. Then,

\begin{equation*}
\lver \mathcal{A}' (\zeta,v).(\eta,w) \rver_{H^{s_{1}} \times H^{s_{2}}} \leq \epsilon C(\mu_{\max}) \lver (\zeta,v) \rver_{H^{s_{1}+1} \times H^{s_{2}+1}} \lver (\eta,w) \rver_{H^{s_{1}+1} \times H^{s_{2}+1}}.
\end{equation*}

\end{lemma}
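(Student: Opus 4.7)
The plan is to linearize $\mathcal{A}$ explicitly and then reduce the estimate to standard Sobolev product inequalities combined with $H^s$-boundedness of $\mathcal{H}_\mu$. First I would compute the Gâteaux derivative directly from the definition of $\mathcal{A}$: writing $\mathcal{A}(\zeta+\tau\eta, v+\tau w)$ and differentiating at $\tau = 0$ gives
\begin{equation*}
\mathcal{A}'(\zeta,v).(\eta,w) = -\frac{\epsilon \sqrt{\mu}}{2} \begin{pmatrix} \left((\Hmu^2+1)w\right) \partial_x \zeta + \left((\Hmu^2+1)v\right) \partial_x \eta \\ 3 w \partial_x v + 3 v \partial_x w \end{pmatrix}.
\end{equation*}

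Next I would bound each component separately. For the second component $3(w\partial_x v + v \partial_x w)$, which sits in $H^{s_2}$, I would invoke the Sobolev product estimate (Proposition \ref{productestim}, referenced later in the paper) to get a bound by $C |w|_{H^{s_2+1}}|v|_{H^{s_2+1}} + C|v|_{H^{s_2+1}}|w|_{H^{s_2+1}}$, losing exactly one derivative on each factor, which is accounted for in the $H^{s_2+1}$ norms on the right-hand side. For the first component I would similarly apply the product estimate to $((\Hmu^2+1)w)\partial_x\zeta$ and $((\Hmu^2+1)v)\partial_x\eta$; the factor $\partial_x\zeta$ (resp. $\partial_x\eta$) explains the loss of one derivative yielding the $H^{s_1+1}$ norm, while the factor $(\Hmu^2+1)w$ (resp. $(\Hmu^2+1)v$) is controlled in $H^{s_1}$ by $C(\mu_{\max})\,|w|_{H^{s_1}}$ using the boundedness of $\Hmu$ on Sobolev spaces (Proposition \ref{control_Hmu}), and this in turn is dominated by $|w|_{H^{s_2+1}}$ when $s_1 \le s_2+1$; otherwise one would simply place the derivatives on the other factor and conclude symmetrically.

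Collecting the prefactor $\frac{\epsilon\sqrt{\mu}}{2}$, and bounding $\sqrt{\mu}\le\sqrt{\mu_{\max}}$ via \eqref{parameters_constraints}, produces the constant $\epsilon C(\mu_{\max})$, and the sum of all product-estimate bounds is absorbed into $|(\zeta,v)|_{H^{s_1+1}\times H^{s_2+1}} |(\eta,w)|_{H^{s_1+1}\times H^{s_2+1}}$.

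There is no real obstacle here; the only mildly delicate point is bookkeeping the Sobolev indices so as to confirm that a single derivative suffices on each argument and that $\Hmu^2$ costs nothing but a $\mu_{\max}$-dependent factor. In particular, since the only non-local operator in $\mathcal{A}'$ is $\Hmu^2+1$ acting on a single factor of each bilinear term and it maps $H^s$ into $H^s$ with norm bounded by $C(\mu_{\max})$, no fractional-derivative commutator appears and the whole estimate reduces to standard tame multiplication in Sobolev spaces.
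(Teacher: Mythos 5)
Your proof is correct and follows the paper's own argument exactly: the paper likewise writes out $\mathcal{A}'(\zeta,v).(\eta,w)$ in precisely the form you give and then simply invokes Propositions \ref{control_Hmu} and \ref{productestim} to conclude. The only quibble is your ``otherwise'' clause for $s_{1} > s_{2}+1$, which Proposition \ref{productestim} does not actually cover, but this edge case is equally unaddressed in the paper and irrelevant to the indices at which the lemma is applied.
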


\begin{proof}
\noindent We have 

\begin{equation*}
\mathcal{A}' (\zeta,v).(\eta,w) =  - \frac{\epsilon \sqrt{\mu}}{2} \begin{pmatrix}  \left( \left(\Hmu^{2} + 1 \right) \! v \right) \partial_{x} \eta + \left( \left(\Hmu^{2} + 1 \right) \! w  \right)\partial_{x} \zeta  \\ 3 w \partial_{x} v+ 3 v \partial_{x} w \end{pmatrix},
\end{equation*}

\noindent and the estimate follows from \textcolor{black}{Propositions \ref{control_Hmu} and \ref{productestim}.}
\end{proof}

\noindent We can do the same for the dispersive part (using also Proposition \ref{control_Hmu}). We recall that the dispersive operator is the operator $\mathcal{D}$

\small
\begin{equation*}
\mathcal{D}(\zeta,v) = \begin{pmatrix} \Hmu v  + \epsilon \sqrt{\mu} \left( \frac{1}{2} \Hmu \left(v \partial_{x} \Hmu \zeta \right) + \Hmu \left( \zeta \partial_{x} \Hmu v \right) +  \zeta \partial_{x} v - \frac{1}{2} \partial_{x} \zeta \Hmu^{2} v \right) - \beta \sqrt{\mu} \partial_{x} \left(B_{\mu} v \right) \\ - \partial_{x} \zeta + \frac{\epsilon \sqrt{\mu}}{2} \partial_{x} \zeta \Hmu \partial_{x} \zeta + \frac{\epsilon \sqrt{\mu}}{2} v \Hmu^{2} \partial_{x} v \end{pmatrix}
\end{equation*}
\normalsize

\begin{lemma}\label{Dprime}
Let $s >0$,  $\epsilon,\beta, \mu$ satisfying Condition \eqref{parameters_constraints}  and $b \in L^{\infty}(\R)$. Then,

\begin{equation*}
\lver \mathcal{D}' (\zeta,v).(\eta,w) \rver_{H^{s} \times H^{s}} \hspace{-0.05cm} \leq \hspace{-0.05cm} C(\mu_{\max}) \left(1 \hspace{-0.1cm} + \hspace{-0.1cm} \beta \lver b \rver_{L^{\infty}} \hspace{-0.1cm} + \hspace{-0.1cm} \epsilon \lver (\zeta,v) \rver_{H^{s+1} \times H^{s+1}} \right) \hspace{-0.1cm} \lver (\eta,w) \rver_{H^{s+1} \times H^{s+1}} \hspace{-0.05cm}.
\end{equation*}
\end{lemma}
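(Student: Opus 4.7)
The plan is to differentiate $\mathcal{D}$ term by term and split the resulting linearisation $\mathcal{D}'(\zeta,v).(\eta,w)$ into three groups, according to whether a given summand is purely linear, proportional to $\beta$, or proportional to $\epsilon\sqrt{\mu}$, and then estimate each group separately in $H^s \times H^s$. The purely linear contributions are $\Hmu w$ in the first coordinate and $-\partial_x \eta$ in the second; by Proposition \ref{control_Hmu} one has $\lver \Hmu w \rver_{H^s} \leq C(\mu_{\max}) \lver w \rver_{H^s}$, while trivially $\lver \partial_x \eta \rver_{H^s} \leq \lver \eta \rver_{H^{s+1}}$. Together they account for the constant $1$ on the right-hand side.

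For the bathymetry term $-\beta \sqrt{\mu} \partial_x (B_\mu w) = -\beta \sqrt{\mu} \partial_x \sech(\sqrt{\mu} D)\bigl(b \; \sech(\sqrt{\mu}D) w\bigr)$, I would use that the symbol $\sech(\sqrt{\mu}\xi)$ decays exponentially, uniformly in $\mu \in [\mu_{\min},\mu_{\max}]$, so $\sech(\sqrt{\mu}D) : L^2 \to H^\sigma$ is bounded for every $\sigma \geq 0$. Reading $\partial_x B_\mu w$ as a composition, the inner $\sech$ sends $w \in L^2$ to $L^2$, multiplication by $b$ preserves $L^2$ with norm $\lver b \rver_{L^\infty}$, and $\partial_x \sech(\sqrt{\mu}D) : L^2 \to H^s$ is bounded; altogether this yields $\lver \partial_x B_\mu w \rver_{H^s} \leq C(\mu_{\max}) \lver b \rver_{L^\infty} \lver w \rver_{H^{s+1}}$, which is the $\beta \lver b \rver_{L^\infty}$ summand.

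For the remaining $\epsilon\sqrt{\mu}$ terms, produced by Leibniz applied to $\tfrac12 \Hmu(v\partial_x \Hmu \zeta) + \Hmu(\zeta \partial_x \Hmu v) + \zeta \partial_x v - \tfrac12 \partial_x\zeta \Hmu^2 v$ in the first component and to $\tfrac12 \partial_x\zeta \Hmu \partial_x \zeta + \tfrac12 v \Hmu^2 \partial_x v$ in the second, every summand is bilinear in $(\zeta,v)$ and $(\eta,w)$, with each factor carrying at most one derivative and a bounded number of $\Hmu$'s, possibly preceded by an outer $\Hmu$. Since $\Hmu$ preserves every $H^\sigma$ by Proposition \ref{control_Hmu}, the two factors lie respectively in $H^s$ (from $(\zeta,v) \in H^{s+1} \times H^{s+1}$) and $H^s$ (from $(\eta,w) \in H^{s+1} \times H^{s+1}$), and Proposition \ref{productestim} delivers a bound of the form $\epsilon \sqrt{\mu} \lver(\zeta,v)\rver_{H^{s+1}\times H^{s+1}} \lver(\eta,w)\rver_{H^{s+1}\times H^{s+1}}$. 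Summing the three groups gives the claimed inequality. I do not expect any deep obstacle; the one step that demands a moment of care is the bathymetry term, where $b$ is merely $L^\infty$, so the positive Sobolev regularity must come entirely from the smoothing of $\sech(\sqrt{\mu}D)$ rather than from a product estimate at positive index.
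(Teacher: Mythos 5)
Your proof is correct and follows exactly the route the paper intends: the paper gives no separate argument for Lemma \ref{Dprime}, saying only that one proceeds as for the transport operator using Propositions \ref{control_Hmu} and \ref{productestim}, and that is precisely your decomposition into linear, bathymetric and quadratic pieces, with the smoothing of $\sech(\sqrt{\mu}D)$ correctly identified as the only way to compensate for $b$ being merely $L^{\infty}$. The one caveat, shared with the paper's own statement, is that the purely quadratic products such as $\partial_x\zeta\,\Hmu\partial_x\eta$ need $s>\frac{1}{2}$ for Proposition \ref{productestim} to map $H^{s}\times H^{s}$ into $H^{s}$ (and the $\sech(\sqrt{\mu}D)$ smoothing constant in fact depends on $\frac{1}{\mu_{\min}}$ rather than only $\mu_{\max}$), but both points are immaterial in the high-regularity regime where the lemma is actually invoked.
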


\noindent Furthermore, we have to control the derivative of the flow $\Phi_{\mathcal{A}}^{t}$ with respect to the initial data. We denote it by $\left(\Phi_{\mathcal{A}}^{t} \right)'$. 

\begin{lemma}\label{dphiAprime}
Let $s_{1}, s_{2} \geq 0$, $M > 0$,  $\epsilon,\beta, \mu$ satisfying Condition \eqref{parameters_constraints}  and $b \in L^{\infty}(\R)$. Let $\left( \zeta_{0}, v_{0} \right) \in H^{s_{1}+1} \times H^{s_{2}+1} (\RD)$ such that,

\begin{equation*}
\lver \left( \zeta_{0}, v_{0} \right) \rver _{H^{s_{1}+1} \times H^{s_{2}+1}} \leq M.
\end{equation*}

\noindent Then, there exists a time $T = T(M,\mu_{\max})$, such that $\left(\Phi_{\mathcal{A}}^{t} \right)' \left( \zeta_{0}, v_{0} \right) \cdot \left(\eta_{0},w_{0} \right)$ exists for all $t \in \left[0, \frac{T}{\delta} \right]$ and if we denote

\begin{equation*}
\begin{pmatrix} \eta \\ w \end{pmatrix} = \left(\Phi_{\mathcal{A}}^{t} \right)' \left( \zeta_{0}, v_{0} \right) \cdot \left(\eta_{0},w_{0} \right),
\end{equation*}

\noindent for all  $0 \leq t \leq \frac{T}{\delta }$,

\begin{equation*}
\lver \left( \eta, w \right)(t,\cdot) \rver_{H^{s_{1}} \times H^{s_{2}}} \leq \lver \left( \eta_{0}, w_{0} \right) \rver _{H^{s_{1}} \times H^{s_{2}}} C \left(\mu_{\max}, M \right).
\end{equation*}

\end{lemma}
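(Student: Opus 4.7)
The plan is to obtain $(\eta,w) = (\Phi_\mathcal{A}^t)'(\zeta_0,v_0)\cdot(\eta_0,w_0)$ as the unique solution of the linearized transport system
\begin{equation*}
\left\{
\begin{aligned}
&\partial_t \eta + \tfrac{\epsilon\sqrt{\mu}}{2}\bigl((\Hmu^2+1)v\bigr)\partial_x \eta + \tfrac{\epsilon\sqrt{\mu}}{2}\bigl((\Hmu^2+1)w\bigr)\partial_x \zeta = 0,\\
&\partial_t w + \tfrac{3\epsilon\sqrt{\mu}}{2}\bigl(v\partial_x w + w\partial_x v\bigr) = 0,
\end{aligned}
\right.
\end{equation*}
with initial datum $(\eta_0,w_0)$, where $(\zeta,v)=\Phi_\mathcal{A}^t(\zeta_0,v_0)$ is the background flow. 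First I would invoke Proposition \ref{existence_transport} (applied with Sobolev indices $s_1{+}1$ and $\max(s_2{+}1,2)$) to secure existence of $(\zeta,v)$ on $[0,T_1/\epsilon]$ together with a uniform bound $\lver(\zeta,v)(t,\cdot)\rver_{H^{s_1+1}\times H^{s_2+1}}\leq C(M,\mu_{\max})$. Since $T/\delta\leq T/\epsilon$, the background flow is controlled on the target interval.

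Because the linearized system is linear in $(\eta,w)$ with Lipschitz coefficients depending only on $(\zeta,v)$, existence and uniqueness of $(\eta,w)$ on the same time interval follows from standard theory for symmetrizable hyperbolic systems. The core of the argument is then the energy estimate. I apply $\Lambda^{s_1}$ to the first equation and $\Lambda^{s_2}$ to the second, test against $\Lambda^{s_1}\eta$ and $\Lambda^{s_2}w$ respectively, and integrate in $x$. For the principal transport terms, integration by parts combined with a Kato--Ponce commutator estimate controls
\begin{equation*}
\bigl|([\Lambda^{s_1},(\Hmu^2+1)v]\partial_x \eta,\Lambda^{s_1}\eta)\bigr| + \bigl|([\Lambda^{s_2},v]\partial_x w,\Lambda^{s_2}w)\bigr|
\end{equation*}
by $\epsilon C(\mu_{\max})\lver(\zeta,v)\rver_{H^{s_1+1}\times H^{s_2+1}}\lver(\eta,w)\rver^2_{H^{s_1}\times H^{s_2}}$, using that $\Hmu^2+1=\mathrm{sech}^2(\sqrt{\mu}D)$ is a Fourier multiplier with bounded (uniformly in $\mu$) symbol, hence preserves $H^s$. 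The non-principal source terms $((\Hmu^2+1)w)\partial_x\zeta$ and $w\partial_x v$ are handled by Proposition \ref{productestim} together with Proposition \ref{control_Hmu}, yielding the same type of bound.

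Summing the two estimates gives
\begin{equation*}
\frac{d}{dt}\Bigl(\lver \eta(t,\cdot)\rver_{H^{s_1}}^2+\lver w(t,\cdot)\rver_{H^{s_2}}^2\Bigr) \leq \epsilon\, C(\mu_{\max},M)\Bigl(\lver \eta(t,\cdot)\rver_{H^{s_1}}^2+\lver w(t,\cdot)\rver_{H^{s_2}}^2\Bigr),
\end{equation*}
and Gronwall's lemma, evaluated on $[0,T/\delta]\subset[0,T_1/\epsilon]$, produces the claimed bound with constant $C(\mu_{\max},M)$. The main technical hurdle will be the commutator analysis for $[\Lambda^{s_1},(\Hmu^2+1)v]\partial_x$: one has to commute the Bessel potential through a product whose second factor is itself a nonlocal Fourier multiplier. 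The cleanest route is to first move $(\Hmu^2+1)$ outside using Proposition \ref{Hmu_commut}, which produces a genuine commutator $[\Hmu^2,v]$ of acceptable order, and then apply the standard Kato--Ponce estimate to $[\Lambda^{s_1},v]\partial_x$; all resulting remainders are bounded by $\lver v\rver_{H^{s_1+1}}\lver \eta\rver_{H^{s_1}}$, which is exactly what Gronwall absorbs.
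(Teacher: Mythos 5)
Your proposal is correct and follows essentially the same route as the paper, which simply writes down the same linearized transport system for $(\eta,w)$ along the background flow $(\zeta,v)=\Phi_{\mathcal{A}}^{t}(\zeta_0,v_0)$ and concludes by energy estimates, Gronwall, and Proposition \ref{existence_transport}. One small remark: the ``technical hurdle'' you flag at the end is illusory, since the coefficient $\left(\Hmu^{2}+1\right)v$ is just a fixed function (the multiplier applied to $v$), so $\left[\Lambda^{s_{1}},\left(\Hmu^{2}+1\right)v\right]\partial_{x}$ is an ordinary Kato--Ponce commutator with a multiplication operator and no nested commutator $\left[\Hmu^{2},v\right]$ ever arises.
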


\begin{proof}
\noindent The quantity $\left( \eta, w \right)$ satisfies the following linear system

\begin{equation*}
\left\{
\begin{array}{l}
\partial_{t} \eta + \frac{\epsilon \sqrt{\mu}}{2} \left(\Hmu^{2} + 1 \right) v \partial_{x} \eta +  \frac{\epsilon \sqrt{\mu}}{2} \left(\Hmu^{2} + 1 \right) w \partial_{x} \zeta = 0,\\
\partial_{t} w + \frac{3 \epsilon \sqrt{\mu}}{2} v \partial_{x} w +  \frac{3 \epsilon \sqrt{\mu}}{2} w \partial_{x} v = 0,
\end{array}
\right.
\end{equation*}

\noindent where $\left( \zeta, v \right) = \Phi_{\mathcal{A}}^{t} \left( \zeta_{0}, v_{0} \right)$. The result follows from energy estimates, the Gronwall lemma and Proposition \ref{existence_transport}.
\end{proof}

\noindent In the following, we use the fact $\Phi^{t}_{\mathcal{A}} \circ \Phi^{t}_{\mathcal{D}}$ satisfies the Saut-Xu system \eqref{saut-xu-deep} up to a remainder. The following lemma is the key point for the control of this remainder.

\begin{lemma}\label{controlremainder}
\noindent Let $N \geq 2$, $M > 0$,  $\epsilon,\beta, \mu$ satisfying Condition \eqref{parameters_constraints}  and $b \in L^{\infty}(\R)$. Let \upshape$\textbf{U} = \left( \zeta, v \right) \in H^{N+\frac{1}{2}} \times H^{N} (\RD)$ \itshape such that,

\upshape
\begin{equation*}
\lver b \rver_{L^{\infty}} + \lver\textbf{U} \rver _{H^{N+\frac{1}{2}} \times H^{N} (\R)} \leq M.
\end{equation*}
\itshape

\noindent Then, there exists a time $T = T \left(M, \mu_{\max}, \frac{1}{\mu_{\min}} \right) > 0$, such that \upshape$\Phi^{t}_{\mathcal{A}} \left( \textbf{U} \right)$ \itshape exists for all  $0 \leq t \leq \frac{T}{\delta}$, and furthermore,

\upshape
\begin{equation*}
\lver \left(\Phi^{t}_{\mathcal{A}} \right)' \left(\textbf{U} \right) \cdot \mathcal{D} \left(\textbf{U} \right) - \mathcal{D} \left( \Phi_{\mathcal{A}}^{t} \left(\textbf{U} \right) \right) \rver_{H^{N-2} \times H^{N-2}} \leq \epsilon C \left(M, \mu_{\max}, \frac{1}{\mu_{\min}} \right) t. 
\end{equation*}
\itshape

\end{lemma}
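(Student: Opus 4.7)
The plan is to view the quantity $F(t) := \left(\Phi^t_{\mathcal{A}}\right)'(\textbf{U}) \cdot \mathcal{D}(\textbf{U}) - \mathcal{D}\bigl(\Phi^t_{\mathcal{A}}(\textbf{U})\bigr)$ as the solution of a linearised transport equation with vanishing initial datum and a small source, and then close the bound by energy methods. Clearly $F(0)=0$, and writing $\textbf{W}(t) := \Phi^t_{\mathcal{A}}(\textbf{U})$ and $\textbf{Y}(t) := \left(\Phi^t_{\mathcal{A}}\right)'(\textbf{U}) \cdot \mathcal{D}(\textbf{U})$, the flow equation $\partial_t \textbf{W} = \mathcal{A}(\textbf{W})$ together with the variational identity $\partial_t \textbf{Y} = \mathcal{A}'(\textbf{W})\cdot \textbf{Y}$ (obtained by differentiating the flow with respect to its initial datum) give
\begin{equation*}
\partial_t F(t) = \mathcal{A}'(\textbf{W}(t))\cdot F(t) + R(t), \qquad R(t) := \mathcal{A}'(\textbf{W})\cdot \mathcal{D}(\textbf{W}) - \mathcal{D}'(\textbf{W})\cdot \mathcal{A}(\textbf{W}).
\end{equation*}
The source $R(t)$ is, up to sign, the formal Lie bracket $[\mathcal{A},\mathcal{D}](\textbf{W})$. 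The crucial observation is that $\mathcal{A}$, and hence $\mathcal{A}'$, carries the prefactor $\epsilon\sqrt{\mu}$, so both summands of $R(t)$ are of size $\epsilon$.

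I would then control $\|R(t)\|_{H^{N-2}\times H^{N-2}}$. Proposition \ref{existence_transport} guarantees that $\textbf{W}(t)$ remains uniformly bounded in $H^{N+\frac{1}{2}}\times H^N$ on $[0,T/\delta]$ for $T$ small enough. Inspection of $\mathcal{D}$, combined with the Fourier-multiplier bounds (Proposition \ref{control_Hmu}) and the product estimates (Proposition \ref{productestim}), shows that $\mathcal{D}(\textbf{W})$ lies in $H^{N-1}\times H^{N-1}$, while $\mathcal{A}(\textbf{W})$ lies in $H^{N-1}\times H^{N-1}$ with an extra factor $\epsilon$. Applying Lemma \ref{Aprime} with $s_1=s_2=N-2$ to the first summand of $R$, and Lemma \ref{Dprime} with $s=N-2$ to the second (using the extra $\epsilon$ smallness coming from $\mathcal{A}(\textbf{W})$), I obtain
\begin{equation*}
\|R(t)\|_{H^{N-2}\times H^{N-2}} \leq \epsilon\, C\bigl(M,\mu_{\max},\tfrac{1}{\mu_{\min}}\bigr) \quad \text{on } [0,T/\delta].
\end{equation*}

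The main obstacle is the last step, namely bounding $F$ itself. One cannot simply apply Lemma \ref{Aprime} to the linear term $\mathcal{A}'(\textbf{W})\cdot F$ appearing on the right-hand side of the ODE, since that lemma costs one derivative and would require $F$ to already be controlled in $H^{N-1}\times H^{N-1}$. Instead I would exploit the first-order symmetrisable structure of $F\mapsto\mathcal{A}'(\textbf{W})\cdot F$: its principal part is diagonal of transport type, with coefficients $(\Hmu^2+1)v\,\partial_x$ on the first component and $3v\,\partial_x$ on the second, while the remaining zero-order cross-terms involve only $\partial_x\zeta$ and $\partial_x v$ as multipliers. Commuting $\Lambda^{N-2}$ with the equation for $F$, pairing with $\Lambda^{N-2}F$ in $L^2$, and invoking Propositions \ref{control_Hmu} and \ref{productestim} in the same fashion as in the proof of Theorem \ref{existence_saut_xu}, one obtains
\begin{equation*}
\tfrac{d}{dt}\|F\|_{H^{N-2}\times H^{N-2}}^2 \leq \epsilon\, C\, \|F\|_{H^{N-2}\times H^{N-2}}^2 + 2\|R\|_{H^{N-2}\times H^{N-2}}\|F\|_{H^{N-2}\times H^{N-2}}.
\end{equation*}
Since $F(0)=0$ and $\|R(t)\|_{H^{N-2}\times H^{N-2}} \leq \epsilon C$, Gronwall's lemma on $[0,T/\delta]$ (where $\epsilon t \leq \epsilon T/\delta \leq T$ because $\epsilon\leq\delta$) yields $\|F(t)\|_{H^{N-2}\times H^{N-2}} \leq \epsilon\, C\, t$, which is the claimed bound.
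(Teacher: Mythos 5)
Your proof is correct, but it is organised differently from the paper's. The paper writes the difference as the integral of its time derivative, $e(t)=\int_0^t\bigl[\mathcal{A}'(\Phi^s_{\mathcal{A}}\textbf{U})\cdot\bigl((\Phi^s_{\mathcal{A}})'(\textbf{U})\cdot\mathcal{D}(\textbf{U})\bigr)-\mathcal{D}'(\Phi^s_{\mathcal{A}}\textbf{U})\cdot\mathcal{A}(\Phi^s_{\mathcal{A}}\textbf{U})\bigr]\,ds$, and bounds the integrand pointwise in $s$: Lemma \ref{Aprime} costs one derivative, but this is absorbed because Lemma \ref{dphiAprime} controls the linearised flow $(\Phi^s_{\mathcal{A}})'(\textbf{U})\cdot\mathcal{D}(\textbf{U})$ uniformly in $H^{N-1}\times H^{N-1}$, one derivative above the target norm; no Gronwall argument is needed. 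You instead split $\textbf{Y}=F+\mathcal{D}(\textbf{W})$, isolate the Lie bracket $R=[\mathcal{A},\mathcal{D}](\textbf{W})$ as an $O(\epsilon)$ source, and close a symmetrised energy estimate for the linear transport system satisfied by $F$ at the fixed regularity $H^{N-2}$. The two routes are close in substance --- your energy estimate for $F$ is essentially the proof of Lemma \ref{dphiAprime} with a source term added, and your bound on $R$ coincides with the paper's bound on the integrand after substituting $\textbf{Y}=F+\mathcal{D}(\textbf{W})$ --- but yours makes the commutator structure explicit (the standard presentation in the splitting literature) at the price of redoing the quasilinear estimate, while the paper's is shorter given that Lemma \ref{dphiAprime} is already in hand. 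One minor caveat on your side: the commutator step in your $H^{N-2}$ energy estimate appeals to Proposition \ref{coifmanmeyer}, which is stated only for $s>\frac32$, so for the low values $N=2,3$ permitted by the hypotheses you would need a Kato--Ponce type variant (or simply note that for $N-2=0$ no commutator arises); this is harmless since the lemma is only invoked downstream with $N\geq 4$.
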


\begin{proof}
\noindent The existence of $T$ follows from Proposition \ref{existence_transport}. Then, we notice that

\begin{equation*}
\left(\Phi^{t}_{\mathcal{A}} \right)' \hspace{-0.1cm} \left(\textbf{U} \right) \hspace{-0.05cm} \cdot \mathcal{D} \hspace{-0.05cm} \left(\textbf{U} \right) - \mathcal{D} \hspace{-0.05cm} \left( \hspace{-0.05cm} \Phi_{\mathcal{A}}^{t} \hspace{-0.05cm} \left(\textbf{U} \right) \right) \hspace{-0.1cm} = \hspace{-0.15cm} \int_{0}^{t} \hspace{-0.25cm} \mathcal{A}' \hspace{-0.1cm} \left(\Phi_{\mathcal{A}}^{s} \hspace{-0.05cm} \left(\textbf{U} \right) \right) \cdot \left( \left(\Phi^{s}_{\mathcal{A}} \right)' \hspace{-0.1cm} \left(\textbf{U} \right) \hspace{-0.05cm} \cdot \hspace{-0.1cm} \mathcal{D} \hspace{-0.05cm} \left( \textbf{U} \right) \hspace{-0.05cm} \right) - \mathcal{D}' \hspace{-0.05cm} \left( \hspace{-0.05cm} \Phi_{\mathcal{A}}^{s} \left(\textbf{U} \right) \hspace{-0.05cm} \right) \cdot \mathcal{A} \left( \Phi^{s}_{\mathcal{A}} \left( \textbf{U} \right) \hspace{-0.05cm} \right) \hspace{-0.05cm}.
\end{equation*}

\noindent Using Lemmas \ref{Aprime}, \ref{Dprime} and Proposition \ref{existence_transport}, we get,

\small
\begin{align*}
 \lver \left(\Phi^{t}_{\mathcal{A}} \right)' \hspace{-0.1cm} \left(\textbf{U} \right) \hspace{-0.05cm} \cdot \mathcal{D} \hspace{-0.05cm} \left(\textbf{U} \right) - \mathcal{D} \hspace{-0.05cm} \left( \hspace{-0.05cm} \Phi_{\mathcal{A}}^{t} \hspace{-0.05cm} \left(\textbf{U} \right) \right) \rver_{H^{N-2} \times H^{N-2}} \leq  C \left( \mu_{\max}, M \right) \int_{0}^{t} &\epsilon \lver \left(\Phi^{s}_{\mathcal{A}} \right)' \left(\textbf{U} \right) \hspace{-0.05cm} \cdot \mathcal{D} \left( \textbf{U} \right) \rver_{H^{N-1} \times H^{N-1}}\\
& + \lver \mathcal{A} \left( \Phi^{s}_{\mathcal{A}} \left( \textbf{U} \right) \hspace{-0.05cm} \right) \rver_{H^{N-1} \times H^{N-1}}.
\end{align*}
\normalsize

\noindent Then, using Lemma \ref{dphiAprime}, the product estimate \ref{productestim} and the expression of $\mathcal{A}$, we obtain

\small
\begin{equation*}
 \lver \hspace{-0.05cm} \left(\Phi^{t}_{\mathcal{A}} \right)' \hspace{-0.15cm} \left(\textbf{U} \right) \hspace{-0.1cm} \cdot \hspace{-0.05cm} \mathcal{D} \hspace{-0.05cm} \left(\textbf{U} \right) \hspace{-0.05cm} - \hspace{-0.05cm} \mathcal{D} \hspace{-0.05cm} \left( \hspace{-0.05cm} \Phi_{\mathcal{A}}^{t} \hspace{-0.05cm} \left(\textbf{U} \right) \hspace{-0.05cm} \right) \hspace{-0.05cm} \rver_{H^{N-2} \times H^{N-2}} \hspace{-0.15cm} \leq \hspace{-0.05cm} \epsilon C \hspace{-0.05cm} \left( \mu_{\max}, M \right) \hspace{-0.15cm} \int_{0}^{t} \hspace{-0.2cm} \lver \mathcal{D} \left( \textbf{U} \right) \rver_{H^{N-1} \times H^{N-1}} + \lver \Phi^{s}_{\mathcal{A}} \left( \textbf{U} \right) \rver^{2}_{H^{N} \times H^{N}} \hspace{-0.1cm}.
\end{equation*}
\normalsize

\noindent Finally, the result follows from the expression of $\mathcal{D}$, the product estimate \ref{productestim} and Proposition \ref{control_Hmu}.
\end{proof}

\noindent We can now give the main result of this part, the local error estimate. 

\begin{prop}\label{control_localerror}
Let $N \geq 4$, $M > 0$,  $\epsilon,\beta, \mu$ satisfying Condition \eqref{parameters_constraints}  and $b \in L^{\infty}(\R)$. Let \upshape$\textbf{U}_{0} = \left( \zeta_{0}, v_{0} \right)$ \itshape such that,

\upshape
\begin{equation*}
\lver b \rver_{L^{\infty}} + \lver \textbf{U}_{0} \rver_{H^{N+\frac{1}{2}} \times H^{N}} \leq M.
\end{equation*}
\itshape

\noindent Then, there exists a time $T_{4} = T_{4} \left(M, \frac{1}{\mu_{\min}}, \mu_{\max} \right) > 0$, such that the local error \upshape$e \left(t, \textbf{U} \right)$ defined in \itshape \eqref{localerror} exists for all  $0 \leq t \leq \frac{T_{4}}{\delta }$, and furthermore,

\upshape
\begin{equation*}
\lver e \left(t, \textbf{U}_{0} \right) \rver_{H^{N-4 + \frac{1}{2}} \times H^{N-4}} \leq \delta C_{4} t^{2}, 
\end{equation*}
\itshape

\noindent where $C_{4} = C \left(\frac{1}{\mu_{\min}}, \mu_{\max}, M \right)$.
\end{prop}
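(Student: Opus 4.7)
The plan is to show that $\mathbf{W}(t) := \mathcal{Y}^t \mathbf{U}_0$ satisfies the Saut--Xu system up to a small source term $R(t)$, and then derive an energy estimate on the difference $e(t) = \mathbf{U}(t) - \mathbf{W}(t)$ using the same symmetrizer structure as in Theorem \ref{existence_saut_xu}.

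First I would fix notation: set $\mathbf{U}(t) = \Phi^t \mathbf{U}_0$, $\mathbf{V}(t) = \Phi^t_{\mathcal{D}} \mathbf{U}_0$, and $\mathbf{W}(t) = \Phi^t_{\mathcal{A}}(\mathbf{V}(t)) = \mathcal{Y}^t \mathbf{U}_0$. Combining Theorem \ref{existence_saut_xu} with Propositions \ref{existence_transport} and \ref{existence_dispersive}, I pick a common time $T_4/\delta$ on which the three flows exist and stay bounded in $H^{N+1/2} \times H^N$ by a constant depending only on $M$, $\mu_{\min}$, $\mu_{\max}$. Differentiating $\mathbf{W}(t) = \Phi^t_{\mathcal{A}}(\mathbf{V}(t))$ in time gives
\begin{equation*}
\partial_t \mathbf{W}(t) = \mathcal{A}(\mathbf{W}(t)) + (\Phi^t_{\mathcal{A}})'(\mathbf{V}(t)) \cdot \mathcal{D}(\mathbf{V}(t)) = \mathcal{A}(\mathbf{W}(t)) + \mathcal{D}(\mathbf{W}(t)) + R(t),
\end{equation*}
where the defect is $R(t) = (\Phi^t_{\mathcal{A}})'(\mathbf{V}(t)) \cdot \mathcal{D}(\mathbf{V}(t)) - \mathcal{D}(\Phi^t_{\mathcal{A}}(\mathbf{V}(t)))$. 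Applying Lemma \ref{controlremainder} (with $\mathbf{U}$ replaced by $\mathbf{V}(t)$, whose $H^{N+1/2}\times H^N$-norm is controlled by Proposition \ref{existence_dispersive}) yields
\begin{equation*}
\lver R(t) \rver_{H^{N-2}\times H^{N-2}} \leq \epsilon C t \leq \delta C t.
\end{equation*}

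Next, I would write the equation satisfied by $e$. Subtracting $\partial_t \mathbf{U} = \mathcal{A}(\mathbf{U}) + \mathcal{D}(\mathbf{U})$ from the above, and using $\mathcal{F}(\mathbf{U}) - \mathcal{F}(\mathbf{W}) = \int_0^1 \mathcal{F}'(s \mathbf{U} + (1-s)\mathbf{W}) \cdot e \, ds$ for $\mathcal{F} = \mathcal{A}$ and $\mathcal{F} = \mathcal{D}$, the error $e$ satisfies a \emph{linear} quasilinear-type system
\begin{equation*}
\partial_t e + \mathcal{L} e + \epsilon \sqrt{\mu}\, \mathcal{M}[\mathbf{U},\mathbf{W}] e = -R(t), \qquad e(0) = 0,
\end{equation*}
with coefficients controlled by $\|\mathbf{U}\|_{H^{N+1/2}\times H^N} + \|\mathbf{W}\|_{H^{N+1/2}\times H^N} \leq C(M)$. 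I would then reproduce, on this linearised system, the symmetrizer argument from Theorem \ref{existence_saut_xu} (and its linearised version from Proposition \ref{lip_saut_diff}), differentiating $\partial_x^{\alpha}$ for $0 \leq \alpha \leq N-4$ and testing against $\mathcal{S} \partial_x^{\alpha} e$. The skew-adjoint part $\mathcal{L}$ drops out, the quasilinear contribution from $\mathcal{M}[\mathbf{U},\mathbf{W}]$ is controlled as in \cite{saut_xu} by $\delta C(M) \|e\|^2_{H^{N-4+1/2}\times H^{N-4}}$, and the source pairing is bounded by $\|R(t)\|_{H^{N-2}\times H^{N-2}} \|e\|_{H^{N-4+1/2}\times H^{N-4}}$. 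This gives
\begin{equation*}
\frac{d}{dt} \lver e \rver_{H^{N-4+1/2}\times H^{N-4}}^2 \leq \delta C \lver e \rver_{H^{N-4+1/2}\times H^{N-4}}^2 + C \lver R(t) \rver_{H^{N-2}\times H^{N-2}} \lver e \rver_{H^{N-4+1/2}\times H^{N-4}}.
\end{equation*}
Gronwall, using $e(0)=0$ and the defect bound $\|R(t)\| \leq \delta C t$, produces
\begin{equation*}
\lver e(t) \rver_{H^{N-4+1/2}\times H^{N-4}} \leq e^{\delta C t} \delta C \int_0^t s\, ds \leq \delta C_4 t^2,
\end{equation*}
as required, provided $\delta t \leq T_4$.

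The main obstacle is the derivative loss bookkeeping. The defect loses two derivatives through Lemma \ref{controlremainder}, and the linearised quasilinear system of Saut--Xu type loses essentially two more derivatives when symmetrised (coefficients involve $\partial_x \mathbf{U}$, $\Hmu \partial_x \zeta$, etc., which must be estimated in $L^\infty$ via Sobolev embedding). These four losses are precisely what forces the regularity gap from $H^{N+1/2}\times H^N$ down to $H^{N-4+1/2}\times H^{N-4}$ and the assumption $N \geq 4$; keeping the loss to four and checking that the resulting energy estimate closes is the delicate point.
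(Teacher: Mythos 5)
Your proposal is correct in its overall architecture and matches the paper's setup almost exactly: the same representation of $\mathcal{Y}^{t}\textbf{U}_{0}$ as a solution of the Saut--Xu system up to the defect $\mathcal{R}(t) = \left(\Phi^{t}_{\mathcal{A}} \right)' \left(\Phi^{t}_{\mathcal{D}} \textbf{U}_{0} \right) \cdot \mathcal{D} \left(\Phi^{t}_{\mathcal{D}} \textbf{U}_{0} \right) - \mathcal{D} \left( \Phi_{\mathcal{A}}^{t} \Phi^{t}_{\mathcal{D}} \textbf{U}_{0} \right)$, the same use of Lemma \ref{controlremainder} to get $\lvert \mathcal{R}(t)\rvert_{H^{N-2}\times H^{N-2}}\leq \delta C t$, and the same symmetrizer $\mathcal{S}$ feeding a Gronwall argument on $[0,T_{4}/\delta]$. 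The one genuine difference is how the nonlinear difference term is closed at level $N-4$. The paper does not linearize along the segment between $\Phi^{t}\textbf{U}_{0}$ and $\mathcal{Y}^{t}\textbf{U}_{0}$ and rerun the symmetrized linearized analysis; it records instead that the quadratic term $\mathcal{T}_{\mu}$ loses one derivative (estimate \eqref{controlnonlin}) and compensates with a two-step bootstrap: first a crude bound $\lvert e(t)\rvert_{H^{N-2}\times H^{N-2}}\leq C t$ obtained by merely integrating $\partial_{t}e$ in time without any symmetrization (estimate \eqref{firstenergyest}), and then the symmetrized estimate at level $N-4$, in which $\mathcal{T}_{\mu}$ is treated as a source of size $O(\epsilon t)$ thanks to that crude bound, producing the inequality $\frac{d}{dt}\mathcal{F}(e)\leq C\left(\beta \mathcal{F}(e)+\epsilon t \sqrt{\mathcal{F}(e)}\right)$. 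Your route replaces this by the claim that the quasilinear contribution is absorbed by the symmetrizer directly at level $N-4$; that is defensible, but it is not free: it amounts to redoing the full linearized analysis of Proposition \ref{lip_saut_diff} for the difference system, including the terms where the derivative falls on the backgrounds (which require the backgrounds one derivative above the level at which $e$ is measured --- satisfied here since $\Phi^{t}\textbf{U}_{0}$ and $\mathcal{Y}^{t}\textbf{U}_{0}$ stay bounded in $H^{N+\frac{1}{2}}\times H^{N}$). The paper's bootstrap buys a shorter proof that only reuses already-established estimates; your route is more systematic but heavier, and you should make explicit that the commutator/integration-by-parts step behind your bound $\delta C \lvert e\rvert^{2}$ is exactly the content of Proposition \ref{lip_saut_diff}. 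Either way, the budget of four lost derivatives and the final bound $\delta C_{4}t^{2}$ come out the same.
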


\begin{proof}
\noindent From Propositions \ref{existence_transport} and \ref{existence_dispersive}, we obtain the existence of $T$. We denote 

\begin{equation*}
\textbf{U}(t) = \begin{pmatrix} \zeta(t) \\ v(t) \end{pmatrix} = \Phi^{t} \left(\textbf{U}_{0} \right) \text{   and   }\textbf{V}(t) = \begin{pmatrix} \eta(t) \\ w(t) \end{pmatrix} = \Phi^{t}_{\mathcal{A}} \left( \Phi^{t}_{\mathcal{D}} \left(\textbf{U}_{0} \right) \right).
\end{equation*}

\noindent Then, from Theorem \ref{existence_saut_xu} and Propositions \ref{existence_transport} and \ref{existence_dispersive}  we also have, for all $0 \leq t \leq \frac{T}{\delta}$,

\begin{equation}\label{controlUV}
\lver \textbf{U}(t, \cdot) \rver_{H^{N+\frac{1}{2}} \times H^{N}} + \lver  \textbf{V}(t, \cdot) \rver_{H^{N+\frac{1}{2}} \times H^{N}} \leq C \left(\frac{1}{\mu_{\min}}, \mu_{\max}, M \right).
\end{equation}

\noindent We know that $\left(\zeta, v \right)$ satisfy the Saut-Xu system \eqref{saut-xu-deep}. Furthermore, $\left(\eta, w \right)$ also satisfy the Saut-Xu system \eqref{saut-xu-deep} up to a remainder

\begin{equation*}
\partial_{t} \begin{pmatrix} \eta \\ w \end{pmatrix} = \mathcal{A} \left(\eta,w\right) +  \mathcal{D} \left(\eta,w\right) + \mathcal{R}(t),
\end{equation*}

\noindent where $\mathcal{R}(t) = \left(\Phi^{t}_{\mathcal{A}} \right)' \left(\Phi^{t}_{\mathcal{D}} \left(\textbf{U}_{0} \right) \right) \cdot \mathcal{D} \left(\Phi^{t}_{\mathcal{D}} \left(\textbf{U}_{0} \right) \right) - \mathcal{D} \left( \Phi_{\mathcal{A}}^{t} \left(\Phi^{t}_{\mathcal{D}} \left(\textbf{U}_{0} \right) \right) \right)$. Therefore, the local error $e$ satisfies the following system

\begin{equation}\label{eq_on_e}
\partial_{t} e = \begin{pmatrix} 0 & H_{\mu} \\- \partial_{x} & 0 \end{pmatrix} e +  \begin{pmatrix} 0 & \beta \sqrt{\mu} B_{\mu} \\ 0 & 0 \end{pmatrix} e + \mathcal{T}_{\mu} \left(\left(\zeta,v \right), \left(\eta,w \right)\right)  - \mathcal{R}(t),
\end{equation}

\noindent where  the operator $\mathcal{T}_{\mu} \left(\textbf{U}, \textbf{V} \right)$ is quadratic and satisfies the following estimate, for $0 \leq s \leq N-1$,

\begin{equation}\label{controlnonlin}
\lver \mathcal{T}_{\mu} \left(\left(\zeta,v \right), \left(\eta,w \right)\right) \rver_{H^{s} \times H^{s}} \leq \epsilon C \left(\frac{1}{\mu_{\min}}, \mu_{\max}, M \right) \lver e \rver_{H^{s+1} \times H^{s+1}}.
\end{equation}

\noindent Then, since $e_{|t=0}=0$,

\begin{equation*}
e(t,\cdot) = \int_{0}^{t} \partial_{t} e(s,\cdot) ds,
\end{equation*}

\noindent and since $e$ satisfies \eqref{eq_on_e}, we obtain, thanks to Estimates \eqref{controlUV}, \eqref{controlnonlin} and Lemma \ref{controlremainder},

\begin{equation}\label{firstenergyest}
\lver e \left(t, \cdot \right) \rver_{H^{N-2} \times H^{N-2}} \leq C \left(\frac{1}{\mu_{\min}}, \mu_{\max}, M \right) t.
\end{equation}

\noindent Furthermore, we recall that the Saut-Xu system \eqref{saut-xu-deep} is symmetrizable thanks to the symmetrizer (see Theorem \ref{existence_saut_xu})

\begin{equation*}
\mathcal{S} = \begin{pmatrix} \frac{D}{\tanh(\sqrt{\mu} D)} & 0 \\ 0 & 1 \end{pmatrix}.
\end{equation*} 

\noindent Therefore, applying $\mathcal{S}$ to the system \eqref{eq_on_e}, and using the fact that $\sqrt{\left( \mathcal{S} \cdot, \cdot \right)}$ is a norm equivalent to the $H^{\frac{1}{2}} \times L^{2}$-norm, we obtain, thanks to estimates \eqref{controlUV}, \eqref{controlnonlin} and \eqref{firstenergyest} and Lemma \ref{controlremainder}, 

\begin{equation*}
\frac{d}{dt}  \mathcal{F}(e) \leq C \left(\frac{1}{\mu_{\min}}, \mu_{\max}, M \right) \left(\beta  \mathcal{F}(e) + \epsilon t \sqrt{\mathcal{F}(e)} \right),
\end{equation*}

\noindent where $\mathcal{F}(e) = \underset{|\alpha| \leq N-4}{\sum} \left(S \partial_{x}^{\alpha} e, \partial_{x}^{\alpha} e \right)$. Then, we get

\begin{equation*}
\mathcal{F}(e)(t) \leq \delta C \left(\frac{1}{\mu_{\min}}, \mu_{\max}, M \right) \int_{0}^{t} \mathcal{F}(e)(s) + s \sqrt{\mathcal{F}(e)(s)} ds.
\end{equation*}

\noindent Denoting $\mathcal{M}(t) = \underset{[0,t]}{\max} \sqrt{\mathcal{F}(e)(t)}$, we have

\begin{equation*}
\mathcal{M}(t) \leq \delta C \left(\frac{1}{\mu_{\min}}, \mu_{\max}, M \right) \int_{0}^{t} \mathcal{M}(s) + s ds,
\end{equation*}

\noindent and the result follows from the Gr\"onwall's lemma.
\end{proof}

\subsection{Global error estimate }

\noindent In this part, we prove our main result. We denote by 

\begin{equation*}
\textbf{U}_{k} =  \left(\mathcal{Y}^{\Delta t}\right)^k  \textbf{U}_{0} 
\end{equation*}

\noindent the \textcolor{black}{approximate} solution and by $ \Ub(t_k) := \Phi^{k \Delta t} \Ub_0$ the exact solution at the time $t_k=k \Delta t$. 

\begin{thm}\label{thm_global_error}
Let \textcolor{black}{$N \geq 7$}, $M > 0$,  $\epsilon,\beta, \mu$ satisfying Condition \eqref{parameters_constraints}  and $b \in L^{\infty}(\R)$. Let \upshape$\textbf{U}_{0} = \left( \zeta_{0}, v_{0} \right)$ \itshape such that,

\upshape
\begin{equation*}
\lver b \rver_{L^{\infty}} + \lver \textbf{U}_{0} \rver_{H^{N+\frac{1}{2}} \times H^{N}} \leq M.
\end{equation*}
\itshape

\noindent Let \upshape$\textbf{U}_{0}(t, \cdot)$ \itshape the solution of the Saut-Xu equations \eqref{saut-xu-deep} with initial data \upshape$\textbf{U}_{0}$ \itshape defined on $\left[0, \frac{T}{\delta} \right]$. Then, there exist constants $\gamma, \nu, \Delta t_0,  C_0>0$ such that for all $\Delta t \in ]0,\Delta t_0]$ and for all $n \in \N$ such that $0 \leq n \Delta t \leq \frac{T}{\delta }$,

\upshape
\begin{equation*}
| \Ub_n|_{H^{N + \frac{1}{2}} \times H^{N}} \leq \nu  \text{  and  }  \lver \Phi^{n \Delta t} \left( \textbf{U}_{0} \right) - \left( \mathcal{Y}^{\Delta t} \right)^{n} \left( \textbf{U}_{0} \right) \rver_{H^{N-4 + \frac{1}{2}} \times H^{N-4}} \leq \gamma \Delta t.
\end{equation*}
\itshape

\end{thm}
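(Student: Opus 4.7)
The plan is to run a Lady Windermere's fan argument together with a bootstrap that simultaneously establishes the high-norm stability bound and the low-norm error estimate. Let $\nu_{0} := \sup_{0 \leq t \leq T/\delta} |\Phi^{t}\textbf{U}_{0}|_{H^{N+1/2}\times H^{N}}$, which is finite by Theorem~\ref{existence_saut_xu}, set $\tilde{M} := \nu_{0}+1$, and let $C_{3}^{\star}$ denote the constant $C_{3}$ from Remark~\ref{rem_op_split} with its lower-norm argument frozen at $\tilde{M}$. Define $\nu := e^{C_{3}^{\star}T}M+1$ and let $n^{\star}$ be the largest integer with $n^{\star}\Delta t \leq T/\delta$ and $|\textbf{U}_{k}|_{H^{N+1/2}\times H^{N}} \leq \nu$ for all $k \leq n^{\star}$. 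I aim to show that $n^{\star}\Delta t$ reaches the final time provided $\Delta t \leq \Delta t_{0}$, which directly yields both conclusions of the theorem.

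Assume the induction hypothesis $|\textbf{U}_{k}|_{H^{N+1/2}\times H^{N}} \leq \nu$ for $k \leq n \leq n^{\star}$. Proposition~\ref{control_localerror} then controls the one-step local error by $\delta C_{4}\Delta t^{2}$ in $H^{N-4+1/2}\times H^{N-4}$ with a constant $C_{4}=C_{4}(\nu)$. The telescopic identity
\[
\Phi^{n\Delta t}\textbf{U}_{0} - \textbf{U}_{n} = \sum_{k=0}^{n-1} \left[\Phi^{(n-k-1)\Delta t}\bigl(\Phi^{\Delta t}\textbf{U}_{k}\bigr) - \Phi^{(n-k-1)\Delta t}\bigl(\mathcal{Y}^{\Delta t}\textbf{U}_{k}\bigr)\right],
\]
combined with Proposition~\ref{lip_saut_xu} applied at level $N-4$ (its regularity hypothesis $H^{N-3+1/2}\times H^{N-3}$ is automatic from the bound by $\nu$, and the endpoints are controlled by Theorem~\ref{existence_saut_xu} and Remark~\ref{rem_op_split}), yields with a uniform Lipschitz constant $K_{L}=K_{L}(\nu)$
\[
|\Phi^{n\Delta t}\textbf{U}_{0} - \textbf{U}_{n}|_{H^{N-4+1/2}\times H^{N-4}} \leq K_{L}\, n\, \delta C_{4}\Delta t^{2} \leq K_{L}C_{4}T\,\Delta t =: \gamma\Delta t.
\]

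To close the bootstrap, Sobolev interpolation between $H^{N-7/2}\times H^{N-4}$ and $H^{N+1/2}\times H^{N}$, in which the target indices $N-3/2$ and $N-2$ both lie exactly at the midpoint (so $\theta=1/2$), gives
\[
|\textbf{U}_{n} - \Phi^{n\Delta t}\textbf{U}_{0}|_{H^{N-3/2}\times H^{N-2}} \leq C\sqrt{\gamma\Delta t}\sqrt{2\nu}.
\]
Consequently, for $\Delta t \leq \Delta t_{0}$ sufficiently small, $|\textbf{U}_{n}|_{H^{N-3/2}\times H^{N-2}} \leq \nu_{0}+1 = \tilde{M}$, so Remark~\ref{rem_op_split} applies with the uniform constant $C_{3}^{\star}$ to yield $|\textbf{U}_{n+1}|_{H^{N+1/2}\times H^{N}} \leq e^{C_{3}^{\star}\delta\Delta t}|\textbf{U}_{n}|_{H^{N+1/2}\times H^{N}}$. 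Iterating this from $k=0$ up to $k=n$ gives $|\textbf{U}_{n+1}|_{H^{N+1/2}\times H^{N}} \leq e^{C_{3}^{\star}T}M < \nu$, which closes the bootstrap and therefore establishes both assertions.

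The principal obstacle is the derivative mismatch intrinsic to this quasilinear setting: the local error loses four derivatives with respect to the working space $H^{N+1/2}\times H^{N}$, whereas the stability constant in Remark~\ref{rem_op_split} only tolerates a loss of two. The $\theta=1/2$ interpolation above resolves this gap at the cost of replacing $\Delta t$ by $\sqrt{\Delta t}$, which is harmless because we only need the lower-norm correction to vanish. Care must also be exercised in the order in which the constants are fixed: $\nu_{0}$, $\tilde{M}$ and $C_{3}^{\star}$ depend only on $M$ and the fixed parameters, and $\nu$ is then chosen from $C_{3}^{\star}$, so even though $K_{L}(\nu)$ and $C_{4}(\nu)$ depend on $\nu$, the final stability constant $C_{3}^{\star}$ does not, making the bootstrap non-circular.
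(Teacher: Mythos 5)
Your argument is correct, and it reaches the same two conclusions by the same overall Lady Windermere's fan skeleton (telescopic identity, Lipschitz stability of $\Phi^t$ from Proposition \ref{lip_saut_xu}, local error from Proposition \ref{control_localerror}, splitting stability from Remark \ref{rem_op_split}), but you close the bootstrap by a genuinely different mechanism. The paper propagates a whole cascade of norm bounds, (iv) in $X^{N-4}$, (iii) in $X^{N-2}$, (ii) in $X^{N}$, where the $X^{N-4}$ bound comes from the error estimate plus the exact solution, and each higher level is obtained by re-applying the splitting stability estimate of Remark \ref{rem_op_split} at that lower regularity level with the constant $C_{3}$ frozen at the level two derivatives below (hence the constants $M_{1}=e^{C_{3}(2\rho)T}M$ and $M_{0}=e^{C_{3}(M_{1})T}M$). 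You instead invoke Remark \ref{rem_op_split} only at the top level $N$ and recover the needed $X^{N-2}$ control of $\textbf{U}_{n}$ by interpolating (with $\theta=1/2$, both components landing exactly at the midpoint) between the $O(\Delta t)$ error bound in $X^{N-4}$ and the $O(1)$ bound in $X^{N}$, accepting a $\sqrt{\Delta t}$ rate in the intermediate norm, which is harmless since only boundedness by $\tilde{M}=\nu_{0}+1$ is needed there. Your route is arguably tidier: it needs one fewer induction statement, the ordering of the constants ($\nu_{0}\to\tilde{M}\to C_{3}^{\star}\to\nu\to\gamma\to\Delta t_{0}$) is transparent and visibly non-circular, and it avoids applying Remark \ref{rem_op_split} at the lowered index $N-2$ (which, taken literally, asks for the hypothesis $N-2\geq 7$ of Proposition \ref{existence_dispersive}); the price is an extra dependence of $\Delta t_{0}$ on $\gamma$ and $\nu$ through the smallness condition that makes the interpolated correction at most $1$, and you should also record that $\Delta t_{0}$ must not exceed the local existence times $T_{0}(\nu)$, $T_{3}(\nu)$, $T_{4}(\nu)$, exactly as the paper does.
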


\begin{proof}

\noindent The proof is based on a Lady's Windermere's fan argument and is similar to the one in \cite{carles_split} (see also \cite{HoldenLR13}). In order to simplify the notations, we forget the dependence on $\frac{1}{\mu_{\min}}$ and $\mu_{\max}$ in all the constants. We denote by $X^{N}$ the following space

\begin{equation*}
X^{N} = H^{N + \frac{1}{2}} \times H^{N}.
\end{equation*}

\noindent Thanks to Theorem \ref{existence_saut_xu}, there exists $\rho$ such that, for all $\textcolor{black}{t^k = k \Delta t } \in \left[ 0,\frac{T}{\delta } \right]$,

\begin{equation*}
\lver \Ub(t_k) \rver_{X^{N}} = \lver \Phi^{t_k}  \textbf{U}_{0} \rver_{X^{N}} \leq \rho. 
\end{equation*}

\noindent We prove by induction that there exists $\Delta t_0, \gamma, \nu$ such that if $0 < \Delta t \leq \Delta t_0,$ for all $n \in \N$ with $n \Delta t \leq \frac{T}{\delta}$,

\begin{align*}
&(i) \text{  } |\Ub_n-\Ub(t_n) |_{X^{N-4}} \leq \gamma \Delta t,\\
&(ii) \text{  } |\Ub_n|_{X^{N}} \leq e^{C_{3}(M_{1}) \delta n \Delta t} \lver \textbf{U}_{0} \rver_{X^{N}} \leq M_{0},\\
&(iii) \text{  } |\Ub_n|_{X^{N-2}} \leq M_{1},\\
&(iv) \text{  } |\Ub_n|_{X^{N-4}} \leq 2 \rho,\\
\end{align*}

\noindent with 

\begin{align*}
&M_{1} =  e^{C_{3}(2 \rho) T} M, M_{0} = e^{C_{3}(M_{1}) T} M, \gamma = T \max(K,1) C_{4} \left(e^{C_{0}(M_{0}) T} M_{0} \right),\\
&\Delta t_{0} = \min \left(T,T_{0}(M_{0}), T_{3}(M_{0}), T_{4}(M_{0}), \frac{\rho}{\gamma} \right),K=K \left( M_{0} e^{T C_{0}(M_{0})} \right)
\end{align*}

\noindent where $C_{0}$, $T_{0}$, $C_{3}$, $T_{3}$, $C_{4}$, $T_{4}$, $K$ are constants from Theorem \ref{existence_saut_xu}, Remark \ref{rem_op_split}, Proposition \ref{control_localerror} and Inequality \eqref{ConstLip}. The above properties are satisfied for $n=0$. Let $n\geq 1$, and suppose that the induction assumptions are true for $0 \leq k \leq  n-1$. First, we have the following telescopic series (see \cite{HoldenLR13} or \cite{carles_split})

\begin{equation}\label{telecsopic_series}
\textbf{U}_{n} - \Ub(t_n) = \underset{0 \leq k \leq n-1}{\sum}  \Phi^{(n-k-1) \Delta t} \mathcal{Y}^{\Delta t} \textbf{U}_{k} - \Phi^{(n-k-1) \Delta t} \Phi^{\Delta t} \textbf{U}_{k}.
\end{equation}

\noindent For $k\leq n-2$, since $\mathcal{Y}^{\Delta t} \Ub_k = \Ub_{k+1}$, using the induction assumption (ii), we have 

\begin{equation*}
|\mathcal{Y}^{\Delta t} \Ub_k|_{X^{N-3}} \leq M_{0},
\end{equation*}

\noindent and from Theorem \ref{existence_saut_xu}, we get
  
\begin{equation*}
|\Phi^{\Delta t} \Ub_k|_{X^{N-3}}  \leq  e^{C_{0}(M_{0}) T} M_{0}. 
\end{equation*}

\noindent Therefore, from  Proposition \ref{lip_saut_xu} and  up to replacing $K=K \left( M_{0} e^{T C_{0}(M_{0})} \right)$ with $\max(K,1)$, we obtain, for $k \leq n-1$ and $n \Delta t \leq \frac{T}{\delta}$, 

\begin{equation*}
 \lver \Phi^{(n-k-1) \Delta t} \mathcal{Y}^{\Delta t} \textbf{U}_{k} - \Phi^{(n-k-1) \Delta t} \Phi^{\Delta t} \textbf{U}_{k} \rver_{X^{N-4}}  \leq K  \lver \mathcal{Y}^{\Delta t} \Ub_k- \Phi^{\Delta t} \Ub(t_k) \rver_{X^{N-4}}. 
\end{equation*}

\noindent Then, using Proposition \ref{control_localerror} and Inequality (ii), we infer

\begin{equation*}
 \lver \Phi^{(n-k-1) \Delta t} \mathcal{Y}^{\Delta t} \textbf{U}_{k} - \Phi^{(n-k-1) \Delta t} \Phi^{\Delta t} \textbf{U}_{k} \rver_{X^{N-4}}  \leq \delta C_{4} \left( e^{C_{0}(M_{0}) T} M_{0} \right) K (\Delta t)^2. 
\end{equation*}

\noindent Therefore, using the telescopic series \eqref{telecsopic_series}, we get

\begin{equation*}
|\textbf{U}_{n} - \Ub(t_n)|_{X^{N-4}} \leq n C_{4} \left( e^{C_{0}(M_{0}) T} M_{0} \right) K \delta (\Delta t)^2 \leq C_{4} \left( e^{C_{0}(M_{0}) T} M_{0} \right) K T \Delta t.
\end{equation*}

\noindent For Estimate (ii), using Remark \ref{rem_op_split} and the induction assumptions (iii) and (ii), we have,

\begin{equation*}
\lver \textbf{U}_{n} \rver_{X^{N}} = \lver \mathcal{Y}^{\Delta t} \left( \textbf{U}_{n-1} \right) \rver_{X^{N}} \leq e^{\delta C_{3}(M_{1}) \Delta t} \lver \textbf{U}_{n-1} \rver_{X^{N}} \leq e^{C_{3}(M_{1}) \delta n \Delta t} \lver \textbf{U}_{0} \rver_{X^{N}} \leq M_{0}.
\end{equation*}

\noindent We get Estimate (iii) in the same way, using  the induction assumptions (iv) and (iii). Finally, for Estimate (iv), using (i), we have

\begin{equation*}
\lver \textbf{U}_{n} \rver_{X^{N-4}} \leq \lver \textbf{U}_{n} - \textbf{U}(t_{n}) \rver_{X^{N-4}} + \lver \textbf{U}(t_{n}) \rver_{X^{N-4}} \leq \gamma \Delta t  + \rho \leq 2 \rho.
\end{equation*}

\end{proof}

\section{Numerical experiments}\label{numsection}

\noindent The aim of this section is to numerically verify the Lie method convergence rate in $\mathcal{O}(\Delta t)$ for the Saut-Xu system \eqref{saut-xu-deep} and to illustrate some physical phenomena.  

\medskip

\noindent In other works and particularly on the whole water waves problem  (see for example \cite{Craig_Sulem_1}, \cite{numeric_Guyenne_moving_bott}, \cite{nicholls_stab_dn} and references therein), several authors use a discrete Fourier transform even for the transport part. They observe spurious oscillations in the wave profile that lead to instabilities. These errors seem to appear when they evaluate the nonlinear part via Fourier transform because additional terms appear in the approximation, this is the aliasing phenomenon. To fix this problem, they apply at every time step a low-pass filter. The main interest of our scheme is that we do not need one 
\textcolor{black}{because we use a finite difference method to approximate the nonlinear part. }

\medskip

\noindent \textcolor{black}{
For the dispersive equation \eqref{dispersive_part}, we use the forward Euler  discretization in time and for the spatial discretization we consider the Fast Fourier Transform (FFT) implemented in Matlab. In this scheme, 
the interval $[0,1]$ is discretized by $N$ equidistant points, with spacing $\Delta x = 1/N$. The spatial grid points are then given by
$x_j = j/N$, $j=0,...,N$. Therefore, if $u_j(t)$ denotes the approximate
solution to $u(t,x_j)$, the discrete Fourier transform of the sequence
$ \left\lbrace u_j \right\rbrace_{j = 0}^{N-1} $ is defined by 
\begin{equation*}
\hat{u}(k) = \mathcal{F}^d_k (u_j) = \sum_{j=0}^{N-1} u_j e^{-2i \pi j k /N },
\end{equation*}
for $k = 0,\cdots, N-1$, and the inverse discrete Fourier transform is given by
\begin{equation*}
u_j = \mathcal{F}_{j}^{-d}( \hat{u}_k ) = \frac{1}{N} \sum_{k = 0}^{N -1} \hat{u}_k e^{ 2 i \pi k x_j}, 
\end{equation*}
for $j = 0, \cdots,N-1 $. Here $\mathcal{F}^d$ denotes the discrete
Fourier transform and $\mathcal{F}^{-d}$ its inverse.}

\color{black}
\noindent Then, in what follows the numerical scheme to solve \eqref{dispersive_part} is given by 
\begin{equation}
\begin{pmatrix} \zeta^{n+1}_j \\ v_j^{n+1} \end{pmatrix} = \begin{pmatrix} \zeta^{n}_j \\ v_j^{n} \end{pmatrix} - \Delta t \begin{pmatrix} F_j^{n} + S_j^n \\ G_j^{n} \end{pmatrix}  
\label{subeq1}
\end{equation}

where $S_j^n = \beta \sqrt{\mu} \mathcal{F}^{-d}_j(i k \mathcal{F}^d_k (B_\mu v^n_j ))$ and $F_j^n = I_1 + I_2 $ with 

\begin{align*}
I_1 =  \mathcal{F}^{-d}_j \Big(  i \, \tanh(\sqrt{\mu} k) \Big( -1 &+ \frac{\epsilon \sqrt{\mu}}{2}  \mathcal{F}^d_k( v^n_j \mathcal{F}^{-d}_j(k \tanh(\sqrt{\mu k} ) \hat{\zeta}^n_k ) )  \\
&+ \epsilon \sqrt{\mu}   \mathcal{F}^d_k( \zeta^n_j \mathcal{F}^{-d}_j(k \tanh(\sqrt{\mu k} ) \hat{v}^n_k ) )    \Big)   \Big) 
\end{align*}

\begin{equation*}
I_2 = \zeta_j^n \mathcal{F}^{-d}_j\left( i k \hat{v}^n_k \right) + \frac{1}{2} \mathcal{F}^{-d} \left( i k \hat{\zeta}^n_k  \right) \mathcal{F}_j^{-d} \left( \tanh(\sqrt{\mu} k )^2 \hat{v}^n_k \right).
\end{equation*}


\medskip

\noindent To approximate the equation \eqref{transport_part}, we use the following finite difference scheme
\begin{equation}
\begin{pmatrix} \zeta^{n+1}_j \\ v_j^{n+1} \end{pmatrix} =  \begin{pmatrix} \zeta^{n}_j \\ v_j^{n} \end{pmatrix}  - \Delta t  \frac{\epsilon \sqrt{\mu} }{2}  \begin{pmatrix}   G^n_1  \\ 3 G^n_2 \end{pmatrix} 
\label{subeq2}
\end{equation}

where 

\begin{equation*}
G_1 = w_j^n  \frac{\zeta^n_{j+1}-\zeta^n_{j-1}}{2 \Delta x}  - \frac{\Delta t}{2 \Delta x^2} (w^n_j)^2 \left(   \zeta^n_{j-1}-2\zeta^n_j+\zeta^n_{j+1}  \right)
\end{equation*}

\medskip

with $w_j^n = - \mathcal{F}^{-d}_j \left( \tanh( \sqrt{\mu} k)^2 \hat{v}^n_k   \right) + v_j^n$ 
and 
$$G_2^n =  \frac{(v^n_{j+1})^2 - (v^n_{j-1})^2 }{2 \Delta x}  - \frac{\Delta t}{2 \Delta x^2}  \left(  v^n_{j+1/2} \left( (v^n_{j+1})^2 - (v^n_{j})^2 \right) - v^n_{j-1/2} \left( (v^n_j)^2 - (v^n_{j-1} )^2  \right) \right)  $$ 
with $v^n_{j \pm 1/2} = \frac{ v^n_j + v^n_{j \pm 1} }{2}. $

\medskip

\noindent We remarked  that for our numerical simulations, it is not necessary to decompose the term $v \partial_x \zeta$  (see Remark \ref{Tdecomposition}) to get the numerical convergence. Indeed, it seems that since the time step is chosen very small, we obtain a solution of the dispersive equation for each iteration. In this case, we do not need to evaluate the term $
\partial_{x} \zeta \Hmu^{2} v$.
\medskip

\noindent To ensure the validity of our numerical simulations, we have to be careful of the numerical instability, that why the time and the space steps are chosen in a way that the following CFL condition is satisfied: 
 \begin{equation}
 \label{CFL}
 |v | \frac{\Delta t}{\Delta x} < 1. 
 \end{equation}

\color{black}

\medskip
\medskip


\subsection{Example 1: Convergence curve} 

\noindent In this example, we consider the following initial data: 

\begin{equation*}
\zeta_0(x) = \sech   \left(\frac{\sqrt{3}}{2} x \right), \quad v_0= \zeta_0.
\end{equation*}

\noindent with two different bathymetries: a bump \textcolor{black}{ ( $b(x) = \cos(x)$ ) } and a ripple  bottom
\textcolor{black}{
$$  \hspace{0.5cm} b(x) = \left\{
\begin{array}{l}
  0.5 - \frac{1}{18} (x-8)^2 \mbox{ if }  5 \leq x \leq 11 \\
  0 \mbox{ otherwise }. 
\end{array}
\right. $$
}

 Note that in order to avoid numerical reflections due to the boundaries and justify of the use of the Fast Fourier Transform, we  decide to take rapidly decreasing initial data. Figures \ref{evolcos}  and \ref{evolbosse}  display the evolution for different times of the free surface $\zeta$ for these two test cases. \textcolor{black}{We decided to take $\epsilon = 0.1, \mu = 1, \beta = \frac{1}{2}$, $N = 2^8, \Delta x = 2L/N, T = 10$ where N is the mesh modes number, $L=30$ the length of the domain and $T$ the final time. Note that the time step $\Delta t $ is chosen iteratively in a way that the CFL condition \eqref{CFL} is satisfied.  }

\begin{figure}[!t]
\centering
\includegraphics[scale=0.4]{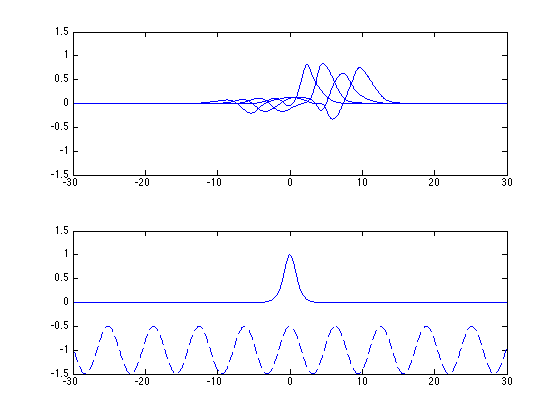}
\caption{Upper: Evolution of the free surface for different times \textcolor{black}{t=2.5,5,7.5 and 10}. 
Lower: bottom topography and initial condition.}
\label{evolcos}
\end{figure}

\begin{figure}[!t]
\centering
\includegraphics[scale=0.4]{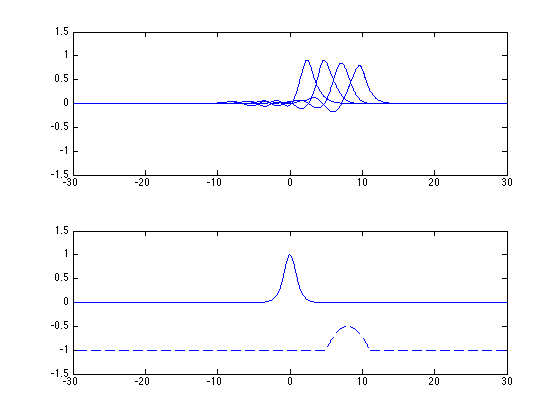}
\caption{Upper: Evolution of the free surface for different times  \textcolor{black}{t=2.5,5,7,5 and 10 }. Lower: bottom topography and initial condition.}
\label{evolbosse}
\end{figure}

\medskip

\noindent Figures \ref{figconv} displays the convergence curve for this example. We plot the logarithm of the  error (in norm $H^1 \times L^2$) in function of the logarithm of the time step $\Delta t.$ The convergence numerical order is then given by the slope of this curve. For reference, a small line (the dashed line) of slope one is added in this figure. We see that the numerical rate of convergence is greater than 1.

\medskip

\begin{figure}[!t]
\centering
\includegraphics[scale=0.25]{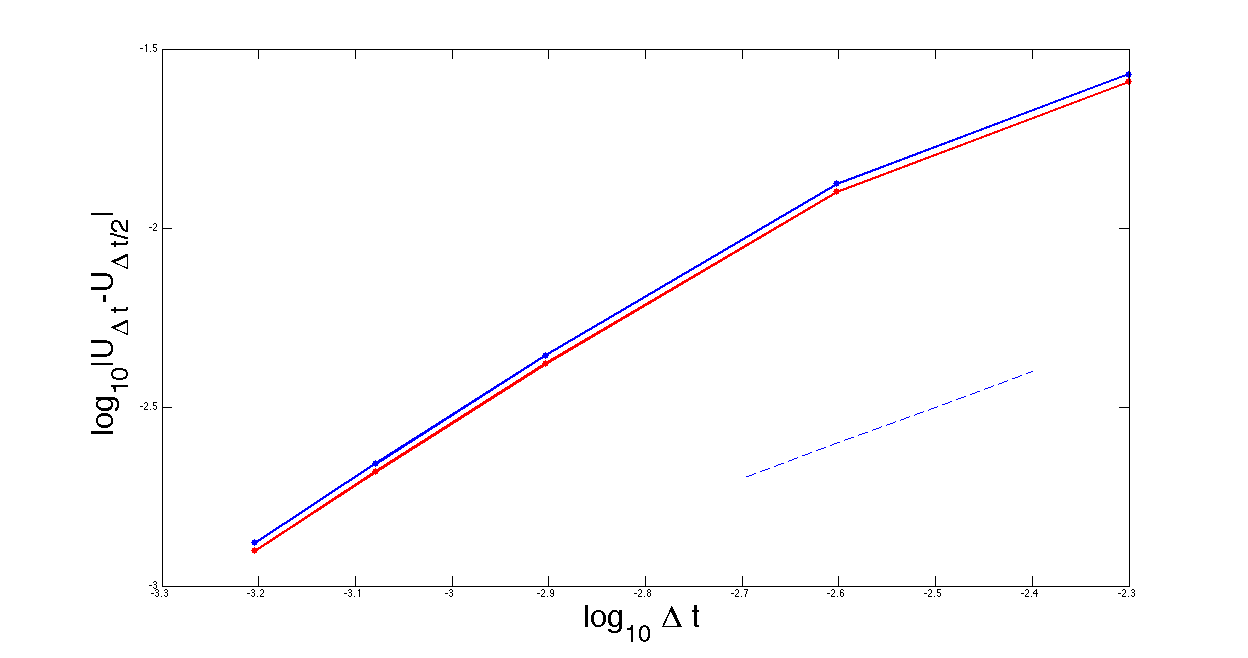}
\caption{Convergence curve (for the $H^1 \times L^2$-norm) for the Lie method for two bottoms : bump (black line) and ripple bottom (blue line) \textcolor{black}{for $T=10.$} }
\label{figconv}
\end{figure}

\color{black}
\subsection{Example 2: Non smooth topographies} 

\noindent In this example we study the evolution of water waves over a rough bottom. This problem is still a mathematical issue. Many models derived from the Euler equations suppose that the bathymetry is smooth. Even worse, a non smooth bathymetry introduces singular terms in these models. This issue is particularly easy to see for shallow waters models. To handle this, Hamilton (\cite{hamilton_rapidlybott}) and Nachbin (\cite{nachbin_nonsmoooth}) used a coformal mapping to derive long waves models. Notice also the work of Cathala (\cite{Cathala}) who derived alternatives Saint-Venant equations and Boussinesq systems with non smooth topographies which do not involve any singular terms. We notice that the Saut-Xu equations \eqref{saut-xu-deep} can handle a non smooth topography (see Theorem \eqref{existence_saut_xu}) and our numerical scheme too (see Theorem \eqref{thm_global_error}).

\medskip

\noindent In the following, we give an example with a non smooth bathymetry. We consider the following initial conditions and bathymetry  

\begin{equation*}
\zeta_{0}(x) = v_{0}(x) = e^{-x^{2}} \text{   and   } b(x)=\frac{\beta}{4}(1+\tanh(100(x-2)))(1-\tanh(100(x-8))).
\end{equation*}

\noindent We decided to take $\mu =1$, $\beta=0.5$, $\epsilon=0.1$, $N = 2^8, \Delta x = 2L/N$ where N is the mesh modes number, $L=20$ the length of the domain. The time step $\Delta t $ is chosen iteratively in a way that the CFL condition \eqref{CFL} is satisfied. Figure \ref{figBottNonSmoo} displays the evolution of the surface $\zeta$ for different times over the bottom.

\begin{figure}[!t]
\centering
\includegraphics[scale=0.3]{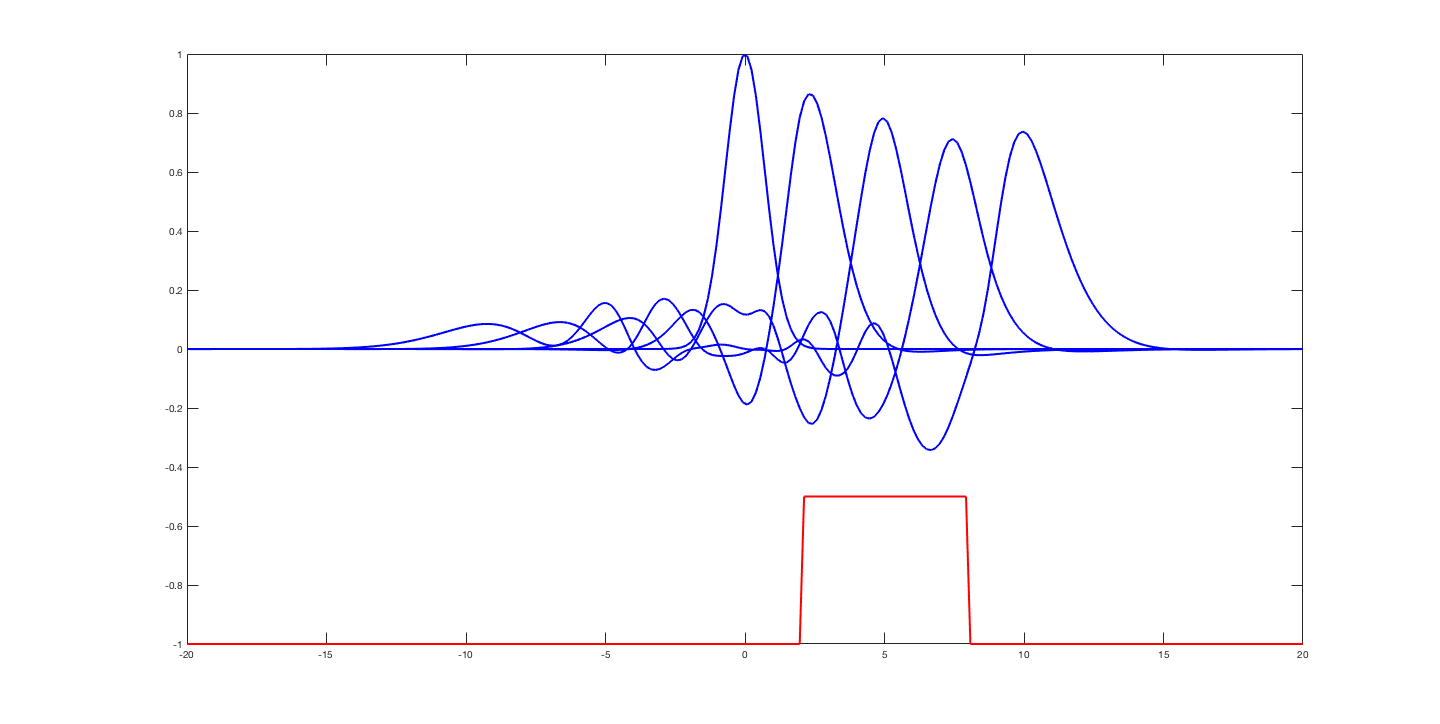}
\caption{Evolution of the free surface (blue lines) for different times t=0,3,6,9 and 12 over a rough bottom (black line).}
\label{figBottNonSmoo}
\end{figure}

\color{black}

\subsection{Example 3: Boussinesq regime}

\noindent \color{black} In Section \ref{proof_splitting}, we crucially use the fact that $\mu$ is bounded from below. In this example, we test our scheme for small values of $\mu$ (also called the shallow water regime). We show that our scheme is still valid even if we do not have a proof of the convergence of our scheme in this regime. \color{black} In the shallow water regime, there is a huge literature for asymptotic models (see for instance \cite{Lannes_ww}). Among all these asymptotic models, we have the KdV equation. It is a model obtained under the Boussinesq regime, i.e. when $\epsilon = \mu$, $\beta = 0$ and $\mu$ small. In the following, we formally derive a KdV equation from the Saut-Xu equations and we give numerical simulations in this setting.

\medskip
\medskip

\noindent We recall that, without the assumption $\nu = \frac{1}{\mu}$, the Saut-Xu equations are given by the system \eqref{saut-xu}. Notice also that

\begin{equation}\label{dt_Hmu}
\mathcal{H}_{\mu} = - \sqrt{\mu} \partial_{x} - \frac{1}{3} \mu^{\frac{3}{2}} \partial_{x}^{3} + \mathcal{O}(\mu^{2}).
\end{equation}

\noindent Then if we assume that $\mu = \epsilon$, $\nu=1$ (since $\nu \sim 1$ if $\mu$ is small) and we drop all the terms of order $\mathcal{O}(\mu^{2})$ in System \eqref{saut-xu}, we obtain the following equations

\begin{equation}\label{saut-xu_boussi}
\left\{
\begin{aligned}
&\partial_{t} \zeta + \partial_{x} v + \mu v \partial_{x} \zeta - \frac{1}{2} \mu^{\frac{3}{2}} v \partial_{x} \zeta + \frac{1}{3} \mu \partial_{x}^{3} v + \mu \zeta \partial_{x} v  =0,\\
&\partial_{t} v + \partial_{x} \zeta + \mu v \partial_{x} v + \mu^{\frac{3}{2}} \frac{1}{2} v \partial_{x} v= 0.
\end{aligned}
\right.
\end{equation}

\noindent Formally, the solutions of this system are close to the solutions of \eqref{saut-xu} with an accuracy of order $\mathcal{O}(\mu^{2})$. Notice that this system is not a standard Boussinesq system (in the sense of \cite{bona_chen_saut_derivation} or \cite{Lannes_ww}) because of our nonlinear change of variables \eqref{change_variable}. Using the approach developed in \cite{schneider_wayne_longwave}, \cite{bona_colin_lannes}, \cite{Alvarez_Lannes} (see also Part 7.1.1 in \cite{Lannes_ww}) we can check that, formally, the following KdV equation is an asymptotic model of the system \eqref{saut-xu_boussi}

\begin{equation}\label{kdv_eq}
\partial_{t} f + \frac{3}{2} f \partial_{x} f + \frac{1}{6} \partial_{x}^{3} f = 0.
\end{equation}

\noindent This means that if we solve \eqref{saut-xu_boussi} with the initial data $\left(f_{0}, f_{0} \right)$ and \eqref{kdv_eq} with the initial datum $f_{0}$, the solution $\left(\zeta, v \right)(t,x)$ of \eqref{saut-xu_boussi} is close to $\left(f,f \right) (\mu t,x-t)$. Furthermore, if we take $f_{0}(x) = \alpha \text{sech}^{2} \left(\sqrt{\frac{3}{4} \alpha} x \right)$, the solution $f$ of the KdV equation with this initial datum is the soliton $f(t,x) = f_{0}(x-ct)$ with $c=\frac{\alpha}{2}$. Hence, in this case, the solution of \eqref{saut-xu_boussi} and \eqref{saut-xu} are close to a soliton.

\medskip
\medskip

\noindent  In the following we check that the solution to \eqref{saut-xu-deep} is indeed close to the KdV solution when $\mu$ is small. We simulate one soliton. We took \color{black}{$v_0(x) = \zeta_0(x) = \sech^2  \left(\frac{\sqrt{3}}{2} x \right)$ \color{black}, $\epsilon = \mu = 0.01$, $\alpha = 1$ and the final time is $T=10$. \textcolor{black}{We decided to take $N = 2^9, \Delta x = 2L/N$ where N is the mesh modes number, $L=30$ the length of the domain. The time step $\Delta t $ is chosen iteratively in a way that the CFL condition \eqref{CFL} is satisfied.}  Figure \ref{evolsoliton} represents the evolution of this soliton at different times. 
\color{black} Hence, our sheme is still valid when $\mu$ is small. \color{black}
\medskip
\medskip

\noindent In deep water ($\mu$ not small), the KdV approximation ceases to be a good approximation. In order to get some insight on the range of validity of the KdV approximation, we compare in Figure \ref{variationmu} the solution of \eqref{saut-xu-deep} to the exact soliton after a time $T=10$ for various values of $\mu$. \textcolor{black}{We took the same numerical parameters that before.} We notice that even for $\epsilon = \mu=0.1$ and a final time $T=\frac{1}{\mu}$, the KdV approximation remains a good approximation of the Saut-Xu system.

\begin{figure}[!t]
\centering
\includegraphics[scale=0.2]{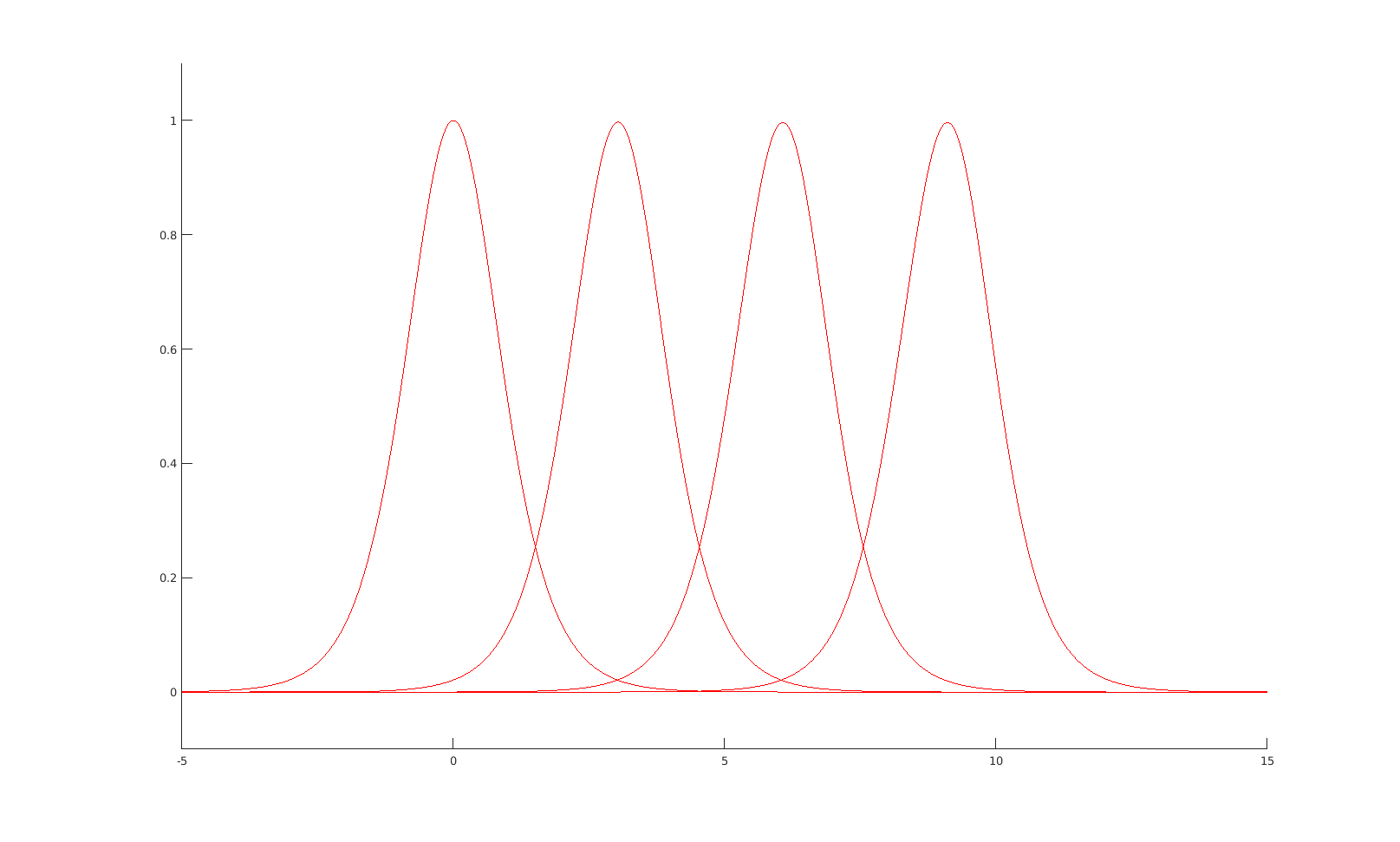}
\caption{Evolution of the soliton at different times $t=0,3,6,9$ ($\epsilon = 0.01$).}
\label{evolsoliton}
\end{figure}

\begin{figure}[!t]
\centering
\includegraphics[scale=0.25]{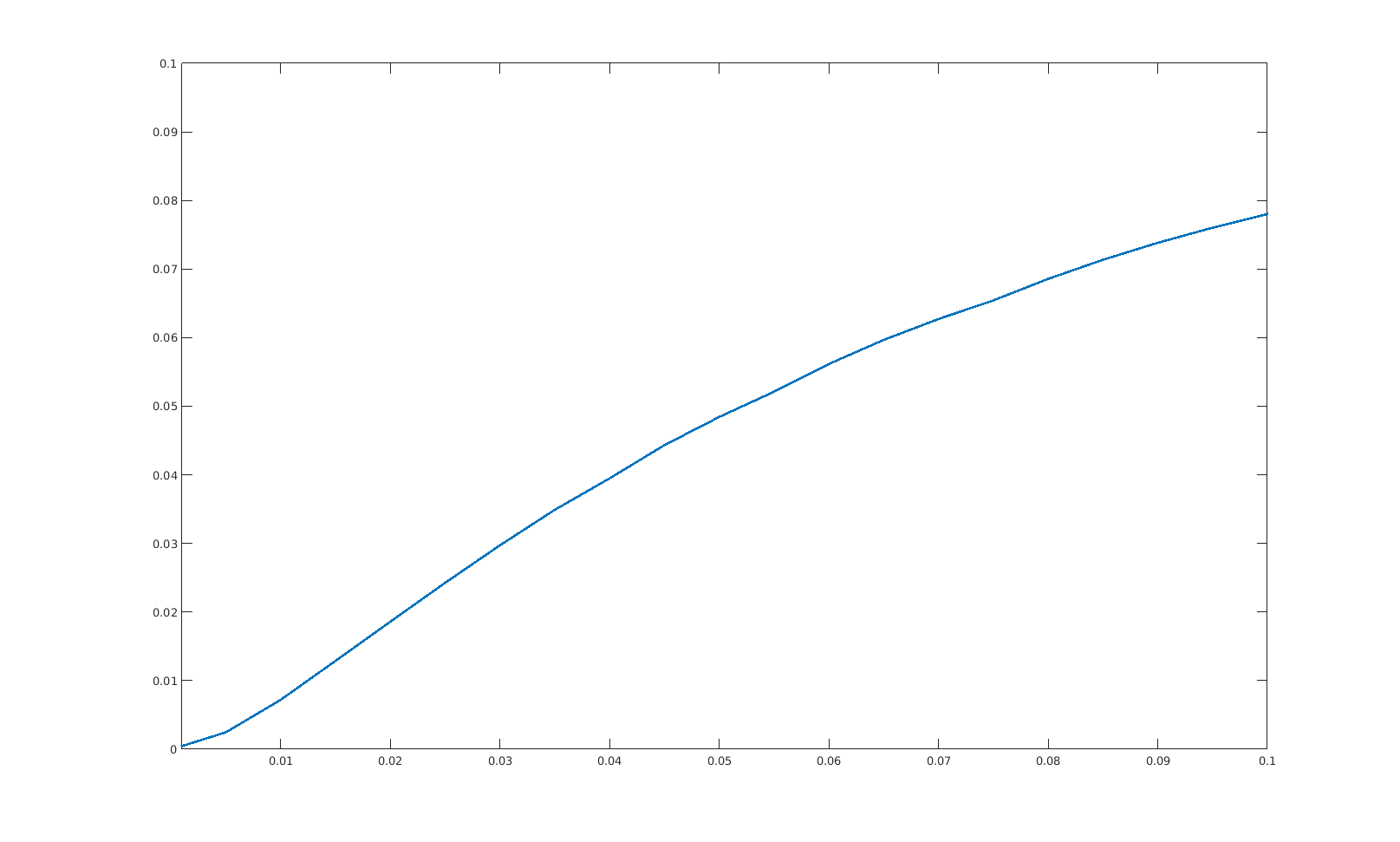}
\caption{Difference after a time $T=10$ between a real soliton and a soliton generated by our scheme with the same initial data and for different values of $\epsilon=\mu$. Abscissa : value of $\epsilon$; Ordinate : \color{black} quotient of the difference after a final time $T=10$ by the maximum of the soliton. \color{black}}
\label{variationmu}
\end{figure}

\subsection{Example 4: Rapidly varying topographies}

\noindent In this example we study the evolution of water waves over a rapidly varying periodic bottom. We assume that $\mu =1$. This problem is linked to the Bragg reflection phenomenon (see for instance \cite{Mei_bragg}, \cite{liu_bragg}, \cite{numeric_Guyenne_moving_bott}). We take

\begin{equation}\label{ini_data_rapidly_bot}
\zeta_{0} = v_{0} = \text{sech}^{2}\left(\frac{\sqrt{3}}{2} x \right) \text{   and   } b(x)=\cos(\alpha x).
\end{equation}

\noindent\textcolor{black}{We decided to take $N = 2^9, \Delta x = 2L/N$ where N is the mesh modes number, $L=30$ the length of the domain. The time step $\Delta t $ is chosen iteratively in a way that the CFL condition \eqref{CFL} is satisfied.} Figure \ref{evolfastbot} compares the evolution of water waves when we take $\alpha=10$ (blue line) and when we take $b(x)=0$ (blue line). Figure \ref{difffastbott} displays the difference between the case of a flat bottom and the case of a bottom of the form $b(x)=\cos(\alpha x)$ for different values of $\alpha$. We observe an homogenization effect when $\alpha$ is large. It seems that when $\alpha$ goes to infinity, a solution of the Saut-Xu equations converges to a solution of the Saut-Xu equations with a flat bottom (corresponding to the mean of $b$). Notice that this result is different from what we could see in the literature ( for instance \cite{chupin_neumann} or \cite{craig_lannes_sulem}), since we take a bottom of the form $b(x)=\cos
 (\alpha x)$ and not of the form $b(x)= \frac{1}{\alpha} \cos(\alpha x)$. Our numerical simulations suggest therefore a homogenization effect for large amplitude bottom variations that has not been investigated so far.

\begin{figure}[!t]
\centering
\includegraphics[scale=0.25]{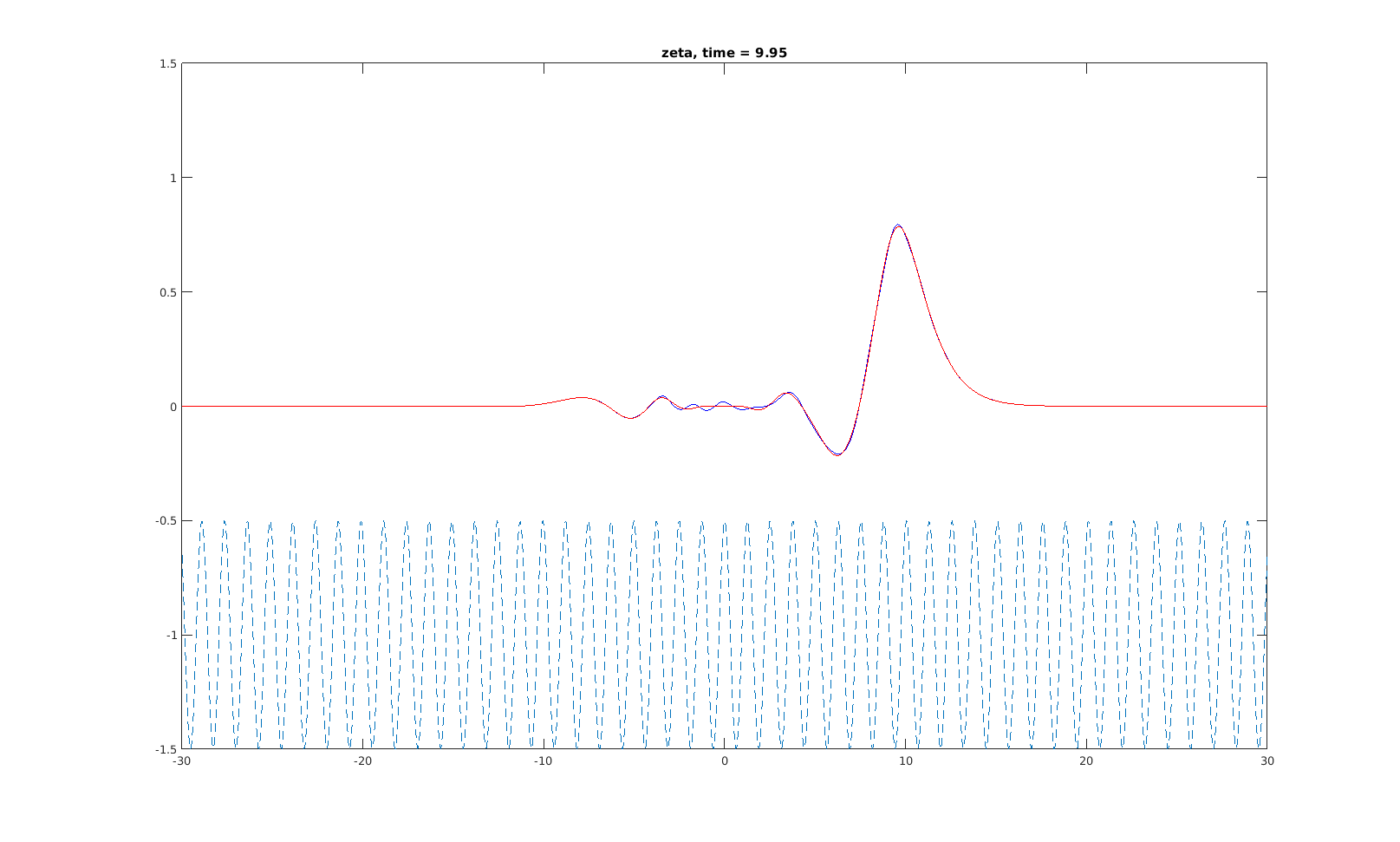}
\caption{Comparison between the evolution of a water wave (blue line) over a bottom of the form $b(x)=\cos(10x)$ (dashed line) and the evolution of a water wave over a flat bottom (black line). $\epsilon =0.05$, $\beta=0.5$.}
\label{evolfastbot}
\end{figure}

\begin{figure}[!t]
\centering
\includegraphics[scale=0.4]{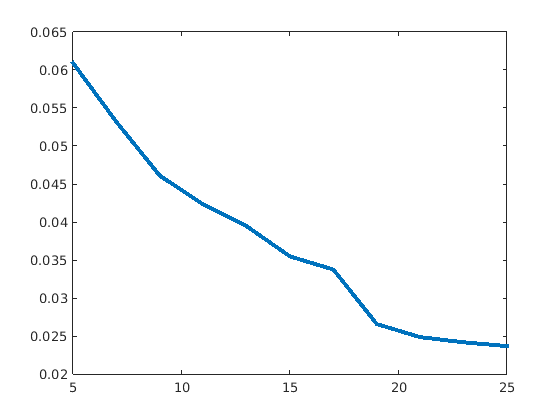}
\caption{Difference between a water wave over a rapidly varying topography $b(x)=\cos(\alpha x)$ and a water wave over a flat bottom. Abscissa : value of $\alpha$; Ordinate : \textcolor{black}{quotient of the difference after a final time $T=10$ by the maximum of $\zeta_{0}$.}}
\label{difffastbott}
\end{figure}

\newpage
\appendix 
\section{}

\noindent In this part, we give some estimate for the operator $\Hmu$ and some standard product and commutator estimates. For the estimates for $\Hmu$, we refer to part III in \cite{saut_xu}. For the other estimates we refer to \cite{alinhac_gerard} and \cite{Lannes_sharp_estimates}. We recall that $\Hmu$ is defined by

\begin{equation*}
\Hmu = - \frac{\tanh(\sqrt{\mu} D)}{D} \partial_{x}.
\end{equation*}

\noindent First, we show that $\Hmu$ is a zero-order operator.

\begin{prop}\label{control_Hmu}
\noindent Let $s \geq 0$ and $\mu$ satisfying Condition \eqref{parameters_constraints}. Then,

\begin{equation*}
\lver \Hmu u \rver_{H^{s}}  \leq  C \left(\mu_{\max} \right) \lver u \rver_{H^{s}}.
\end{equation*}

\noindent Furthermore, for all $s \geq r \geq 0$,

\begin{equation*}
\lver \left( \Hmu^{2} + 1 \right) u \rver_{H^{s}}  \leq  C_{r} \left(\frac{1}{\mu_{\min}} \right) \lver u \rver_{H^{r}}.
\end{equation*}

\end{prop}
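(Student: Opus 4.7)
The plan is to work entirely on the Fourier side. The key observation is that the symbol of $\mathcal{H}_\mu$ is, modulo a harmless factor of $i$, the bounded function $\tanh(\sqrt{\mu}\xi)$. Indeed, since $D$ has symbol $\xi$ and $\partial_x$ has symbol $i\xi$, the operator $\mathcal{H}_\mu = -\tanh(\sqrt{\mu}D)D^{-1}\partial_x$ has symbol $-i\tanh(\sqrt{\mu}\xi)$, whose modulus is bounded by $1$ for every $\xi$ and every $\mu>0$. Consequently, the symbol of $\mathcal{H}_\mu^2+1$ equals $1-\tanh^2(\sqrt{\mu}\xi) = \operatorname{sech}^2(\sqrt{\mu}\xi)$, a function that decays exponentially in $|\xi|$ at a rate controlled from below by $\sqrt{\mu_{\min}}$.

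For the first inequality, I would apply Plancherel and write
\begin{equation*}
\lver \mathcal{H}_\mu u \rver_{H^s}^2 = \int_{\R} (1+\xi^2)^s \tanh^2(\sqrt{\mu}\xi) |\widehat{u}(\xi)|^2 \, d\xi \leq \lver u \rver_{H^s}^2,
\end{equation*}
so in fact the constant can be taken equal to $1$; the dependence on $\mu_{\max}$ is only included for uniformity with the statement.

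For the second inequality, I would again use Plancherel to get
\begin{equation*}
\lver (\mathcal{H}_\mu^2 + 1) u \rver_{H^s}^2 = \int_{\R} (1+\xi^2)^s \operatorname{sech}^4(\sqrt{\mu}\xi) |\widehat{u}(\xi)|^2 \, d\xi,
\end{equation*}
and then factor the weight as $(1+\xi^2)^s = (1+\xi^2)^r (1+\xi^2)^{s-r}$. Since $\operatorname{sech}$ decays exponentially and $\mu \geq \mu_{\min}$, one has
\begin{equation*}
\sup_{\xi \in \R} (1+\xi^2)^{s-r} \operatorname{sech}^4(\sqrt{\mu}\xi) \leq \sup_{\xi \in \R} (1+\xi^2)^{s-r} \operatorname{sech}^4(\sqrt{\mu_{\min}}\xi) =: C_r\left(\tfrac{1}{\mu_{\min}}\right) < \infty,
\end{equation*}
because the polynomial growth of $(1+\xi^2)^{s-r}$ is dominated by the exponential decay of $\operatorname{sech}^4(\sqrt{\mu_{\min}}\xi)$. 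Pulling this uniform bound out of the integral leaves exactly $\lver u \rver_{H^r}^2$, which yields the claim.

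There is no real obstacle here, since both estimates reduce to an elementary pointwise bound on the Fourier symbol followed by Plancherel. The only mildly delicate point is to make the dependence of $C_r$ on $1/\mu_{\min}$ explicit, which is handled by monotonicity of $\operatorname{sech}$ in $\sqrt{\mu}|\xi|$ and a straightforward calculus argument (e.g.\ writing $\operatorname{sech}(y) \leq 2e^{-|y|}$ for $|y|\geq 1$ and bounding separately on $|\xi|\leq 1/\sqrt{\mu_{\min}}$ and $|\xi|\geq 1/\sqrt{\mu_{\min}}$).
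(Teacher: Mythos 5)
Your proof is correct and is exactly the standard argument the paper implicitly invokes (the paper gives no proof of its own, deferring to Part III of Saut--Xu): both bounds reduce to the pointwise symbol estimates $|\tanh(\sqrt{\mu}\xi)|\leq 1$ and $1-\tanh^{2}(\sqrt{\mu}\xi)=\operatorname{sech}^{2}(\sqrt{\mu}\xi)$ followed by Plancherel, with the exponential decay of $\operatorname{sech}$ absorbing the weight $(1+\xi^{2})^{s-r}$. The only cosmetic remark is that your supremum depends on the difference $s-r$ rather than on $r$ alone, so the constant is really $C_{s-r}(1/\mu_{\min})$; this is harmless and consistent with how the proposition is used.
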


\noindent Then, we give a commutator estimate for $\Hmu$. 

\begin{prop}\label{Hmu_commut}
\noindent Let $s \geq 0$, $t_{0} > \frac{1}{2}$, $r \geq 0$, and $\mu$ satisfying Condition \eqref{parameters_constraints}. Then,

\begin{equation*}
\lver \left[\Hmu, a \right] u \rver_{2} \leq C \lver a \rver_{H^{t_{0}}} \lver f \rver_{2}, 
\end{equation*}

\begin{equation*}
\lver |\xi|^{s} \widehat{\left[\Hmu, a \right] u} \rver_{2} \leq C \left(\frac{1}{\mu_{\min}} \right) \lver a \rver_{H^{r+s}} \lver \frac{\left(1+|\xi| \right)^{t_{0}}}{|\xi|^{r}} \widehat{u} \rver_{2},
\end{equation*}

\noindent and

\begin{equation*}
\lver |\xi|^{s} \widehat{\left[\Hmu, a \right] u} \rver_{2} \leq C \left(\frac{1}{\mu_{\min}} \right) \lver a \rver_{H^{r+s+t_{0}}} \lver \frac{1}{|\xi|^{r}} \widehat{u} \rver_{2}.
\end{equation*}

\end{prop}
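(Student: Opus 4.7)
The plan is to work in Fourier variables, using that $\Hmu$ is the Fourier multiplier with symbol $h_{\mu}(\xi) = -i\tanh(\sqrt{\mu}\xi)$, so the commutator admits the representation
\begin{equation*}
\widehat{[\Hmu, a]u}(\xi) = \frac{1}{\sqrt{2\pi}} \int_{\R} \bigl(h_{\mu}(\xi) - h_{\mu}(\eta)\bigr)\, \widehat{a}(\xi - \eta)\, \widehat{u}(\eta) \, d\eta.
\end{equation*}
Every estimate is driven by two elementary kernel bounds: the trivial bound $|h_{\mu}(\xi) - h_{\mu}(\eta)| \leq 2$ and the Lipschitz bound $|h_{\mu}(\xi) - h_{\mu}(\eta)| \leq \sqrt{\mu_{\max}}\, |\xi - \eta|$ coming from $|h_{\mu}'(\xi)| \leq \sqrt{\mu}\sech^{2}(\sqrt{\mu}\xi)$.

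For the first inequality I would use only $|h_{\mu}(\xi) - h_{\mu}(\eta)| \leq 2$ together with Young's convolution inequality to bound $|[\Hmu, a]u|_{2} \leq C\, \|\widehat{a}\|_{L^{1}} |u|_{2}$, and then invoke Cauchy--Schwarz with the assumption $t_{0} > 1/2$ to turn $\|\widehat{a}\|_{L^{1}}$ into $|a|_{H^{t_{0}}}$. For the second and third estimates, the strategy is to split the $\eta$-integration into the region $\{|\xi - \eta| \leq |\eta|/2\}$, where $|\xi| \simeq |\eta|$, and its complement, where the elementary inequality $|\xi|^{s} \leq C(|\xi - \eta|^{s} + |\eta|^{s})$ lets me transfer the weight $|\xi|^{s}$ onto $\widehat{a}(\xi-\eta)$. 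On each region I would interpolate between the two kernel bounds to extract a factor $|\xi - \eta|^{r}$ that absorbs the singular weight $|\eta|^{-r}$ appearing in the weighted norm of $\widehat{u}$. A weighted Young-type convolution estimate then reduces matters to controlling $\|\langle \zeta \rangle^{r+s} \widehat{a}(\zeta)\|_{L^{1}}$ for the second inequality and $\|\langle \zeta \rangle^{r+s+t_{0}} \widehat{a}(\zeta)\|_{L^{1}}$ for the third, both of which Cauchy--Schwarz (together with $t_{0} > 1/2$) converts into $|a|_{H^{r+s}}$ and $|a|_{H^{r+s+t_{0}}}$ respectively.

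The main obstacle will be the careful bookkeeping around the singular factor $|\eta|^{-r}$ at low frequencies: this is precisely where the lower bound $\mu \geq \mu_{\min}$ enters, through the uniform boundedness of $h_{\mu}(\eta)/|\eta|$ on compact neighborhoods of $0$, and gives rise to the constant $C(1/\mu_{\min})$. Apart from that delicate low-frequency analysis, these are structurally standard Kato--Ponce-type commutator estimates adapted to the specific bounded symbol $h_{\mu}$, and the argument parallels the one carried out in Paragraph III of \cite{saut_xu}.
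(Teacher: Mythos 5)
The paper does not prove this proposition; it defers entirely to Part III of \cite{saut_xu}, so your sketch has to be judged on its own merits. Its skeleton --- the Fourier representation $\widehat{[\Hmu,a]u}(\xi)=c\int(h_\mu(\xi)-h_\mu(\eta))\widehat a(\xi-\eta)\widehat u(\eta)\,d\eta$ with $h_\mu(\xi)=-i\tanh(\sqrt\mu\,\xi)$, the splitting into $\{|\xi-\eta|\le|\eta|/2\}$ and its complement, and the use of $t_0>\frac12$ to convert $L^1$ norms of $\widehat a$ into Sobolev norms --- is indeed the right one, and the first inequality is handled correctly.

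There is, however, a genuine gap in the second and third estimates: the two kernel bounds you allow yourself, $|h_\mu(\xi)-h_\mu(\eta)|\le 2$ and $|h_\mu(\xi)-h_\mu(\eta)|\le\sqrt{\mu_{\max}}\,|\xi-\eta|$, are provably insufficient. In the diagonal region $|\xi-\eta|\le|\eta|/2$ one has $|\xi|\simeq|\eta|$ with the same sign, and after substituting $\widehat u(\eta)=\frac{|\eta|^{r}}{(1+|\eta|)^{t_0}}U(\eta)$ the kernel must beat a growth of order $|\eta|^{r+s-t_0}$ as $|\eta|\to\infty$ with $r+s$ arbitrarily large; interpolating between a constant and a Lipschitz bound can only produce a factor $|\xi-\eta|^{\theta}$ with $\theta\le1$, and no power of $|\xi-\eta|$ helps against a power of $|\eta|$ when $\xi\simeq\eta$. (A bounded $1$-Lipschitz symbol that does not flatten at infinity would violate the estimate for $r$ large, e.g.\ testing with $\widehat u=\mathds{1}_{[R,R+1]}$ and $\widehat a$ a fixed bump.) The indispensable third ingredient is the exponential decay $|h_\mu'(\zeta)|\le 4\sqrt\mu\,e^{-2\sqrt\mu|\zeta|}$, which by the mean value theorem on the segment $[\eta,\xi]$ gives $|h_\mu(\xi)-h_\mu(\eta)|\le C\sqrt\mu\,|\xi-\eta|\,e^{-\sqrt\mu|\eta|}$ in that region; converting $e^{-\sqrt\mu|\eta|}$ into polynomial decay $\langle\eta\rangle^{-k}$ costs $\mu^{-k/2}\le\mu_{\min}^{-k/2}$, and \emph{this} is the true source of $C(1/\mu_{\min})$. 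Your attribution of that constant to the low-frequency behaviour of $h_\mu(\eta)/|\eta|$ is incorrect: $|\tanh(\sqrt\mu\,\eta)/\eta|\le\sqrt\mu\le\sqrt{\mu_{\max}}$, and the weight $|\eta|^{-r}$ sits on the right-hand side, so it creates no difficulty at low frequency. Finally, your reduction to $\|\langle\zeta\rangle^{r+s}\widehat a\|_{L^1}$ for the second inequality is off by $t_0$: Cauchy--Schwarz turns that quantity into $|a|_{H^{r+s+t_0}}$, which is the norm of the \emph{third} estimate; obtaining the weaker norm $|a|_{H^{r+s}}$ in the second estimate is exactly the point where the $(1+|\eta|)^{t_0}$ weight on $\widehat u$ must be traded against the kernel, and your sketch does not account for this.
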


\noindent We recall the well-known Coifman-Meyer estimate. We recall also that $\Lambda$ is the Fourier multiplier $\Lambda = \sqrt{1+D^{2}}$.

\begin{prop}\label{coifmanmeyer}
\noindent Let $s > \frac{3}{2}$, $u \in H^{s}(\R)$ and $v \in H^{s-1}(\R)$. Then we have the following commutator estimate

\begin{equation*}
\lver \left[ \Lambda^{s}, u \right] v \rver_{2} \leq C \lver u \rver_{H^{s}} \lver v \rver_{H^{s-1}}.
\end{equation*} 

\end{prop}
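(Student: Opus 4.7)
The plan is to work on the Fourier side and exploit a pointwise bound on the commutator symbol $m(\xi,\eta) := \langle \xi \rangle^{s} - \langle \eta \rangle^{s}$, where $\langle \cdot \rangle = (1 + |\cdot|^{2})^{1/2}$, so that
\begin{equation*}
\widehat{[\Lambda^{s}, u]v}(\xi) = \int_{\R} m(\xi,\eta)\, \widehat{u}(\xi-\eta)\, \widehat{v}(\eta)\, d\eta.
\end{equation*}
Applying the fundamental theorem of calculus to $t \mapsto \langle \eta + t(\xi-\eta) \rangle^{s}$ yields the pointwise estimate
\begin{equation*}
|m(\xi,\eta)| \leq C\, |\xi-\eta|\, \bigl(\langle \xi \rangle^{s-1} + \langle \eta \rangle^{s-1}\bigr),
\end{equation*}
which is essentially the only analytic input; the remainder of the argument is a frequency decomposition.

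Next, I would split the integration in $\eta$ dyadically according to the relative sizes of $|\xi-\eta|$ (the frequency of $u$) and $|\eta|$ (the frequency of $v$), in the spirit of Bony's paraproduct decomposition. In the low-high region $\{|\xi-\eta| \leq \tfrac{1}{2}|\eta|\}$, one has $|\xi| \sim |\eta|$, so $\langle \xi \rangle^{s-1} + \langle \eta \rangle^{s-1} \lesssim \langle \eta \rangle^{s-1}$ and the symbol bound simplifies to $|m| \lesssim |\xi-\eta|\, \langle \eta \rangle^{s-1}$. Undoing the Fourier transform, this piece is controlled pointwise by a product roughly of $\partial_{x} u$ and $\Lambda^{s-1} v$, which I would bound by $\lver \partial_{x} u \rver_{L^{\infty}}\lver v \rver_{H^{s-1}}$. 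The one-dimensional Sobolev embedding $H^{s-1} \hookrightarrow L^{\infty}$, valid since $s-1 > \tfrac{1}{2}$, then gives $\lver \partial_{x} u \rver_{L^{\infty}} \leq C \lver u \rver_{H^{s}}$ and hence the desired estimate on this region.

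In the complementary high-low/high-high region $\{|\xi-\eta| \geq \tfrac{1}{2}|\eta|\}$, both $\langle \xi \rangle$ and $\langle \eta \rangle$ are dominated by $\langle \xi-\eta \rangle$, so $|m| \lesssim \langle \xi-\eta \rangle^{s}$. The corresponding contribution then looks morally like $(\Lambda^{s} u) \cdot (\text{low-frequency } v)$ and I would bound it by $\lver u \rver_{H^{s}}\lver v \rver_{L^{\infty}} \leq C\, \lver u \rver_{H^{s}}\lver v \rver_{H^{s-1}}$, once again using $H^{s-1} \hookrightarrow L^{\infty}$ in one space dimension. The hypothesis $s > \tfrac{3}{2}$ is exactly what both regions require.

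The main obstacle is the high-high interaction: there $|\xi|$ can be much smaller than $|\xi-\eta| \sim |\eta|$ due to cancellation, and one must be careful that after integration against $\widehat{u}(\xi-\eta)\widehat{v}(\eta)$ the output still lies in $L^{2}(\R)$. This is best handled by a Littlewood--Paley bookkeeping that keeps the two factors on separate dyadic shells and uses Young's convolution inequality on each. Apart from that technicality, the whole argument reduces to Young's inequality and the one-dimensional Sobolev embedding, which is why the paper simply refers to \cite{alinhac_gerard}.
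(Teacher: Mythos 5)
The paper offers no proof of this proposition: it is quoted in the appendix as a known estimate with a pointer to \cite{alinhac_gerard} and \cite{Lannes_sharp_estimates}, so there is no argument of the authors to compare yours against. Your argument is the standard one and is essentially correct; the symbol bound $\lvert \langle\xi\rangle^{s}-\langle\eta\rangle^{s}\rvert \le C\,\lvert\xi-\eta\rvert\,(\langle\xi\rangle^{s-1}+\langle\eta\rangle^{s-1})$ is valid for $s\ge 1$ because $\langle\cdot\rangle$ attains its maximum on a segment at an endpoint. Two remarks on the execution. First, in the low--high region the step ``controlled pointwise by a product of $\partial_{x}u$ and $\Lambda^{s-1}v$'' cannot literally be performed after you have taken absolute values of the symbol; the clean implementation stays on the Fourier side, where you are left with the convolution of $\lvert\widehat{\partial_{x}u}\rvert$ and $\langle\cdot\rangle^{s-1}\lvert\widehat{v}\rvert$, and Young's inequality $L^{1}\ast L^{2}\to L^{2}$ gives the bound with constant $\lvert\widehat{\partial_{x}u}\rvert_{L^{1}}\le C\lvert u\rvert_{H^{s}}$, the Cauchy--Schwarz step requiring exactly $s>\tfrac32$; this replaces (and is slightly stronger than) the Sobolev embedding you invoke. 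Second, the high--high interaction you single out as the main obstacle is not one at this level of crudeness: on the whole region $\lvert\xi-\eta\rvert\ge\tfrac12\lvert\eta\rvert$ you have $\lvert m\rvert\le C\langle\xi-\eta\rangle^{s}$, and the same Young inequality yields $\lvert(\langle\cdot\rangle^{s}\lvert\widehat{u}\rvert)\ast\lvert\widehat{v}\rvert\rvert_{L^{2}}\le\lvert u\rvert_{H^{s}}\lvert\widehat{v}\rvert_{L^{1}}\le C\lvert u\rvert_{H^{s}}\lvert v\rvert_{H^{s-1}}$, again using $s-1>\tfrac12$; no Littlewood--Paley bookkeeping is needed, since the possible cancellation in the output frequency $\xi$ is harmless once the $L^{2}$ norm is taken after the convolution. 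With those two points made precise, the proof is complete.
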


\noindent We recall also the following product estimate.

\begin{prop}\label{productestim}
\noindent Let $s_{1}, s_{2},s$ such that $s_{1} + s_{2} \geq 0$, $s \leq \min \left( s_{1}, s_{2} \right)$ and $s < s_{1} + s_{2} - \frac{1}{2}$. Let $u \in H^{s_{1}}(\R)$ and $v \in H^{s_{2}}(\R)$. Then,

\begin{equation*}
\lver u v \rver_{H^{s}} \leq C \lver u \rver_{H^{s_{1}}} \lver v \rver_{H^{s_{2}}}.
\end{equation*}
\end{prop}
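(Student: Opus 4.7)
The plan is to prove this classical Sobolev product estimate by direct Fourier-side analysis. Writing $\widehat{uv}(\xi)$ as the convolution $\hat u * \hat v$, the goal is to bound $\|\langle\xi\rangle^s \widehat{uv}\|_{L^2_\xi}$ by the product $\|\langle\xi\rangle^{s_1}\hat u\|_{L^2}\,\|\langle\eta\rangle^{s_2}\hat v\|_{L^2}$, which by Plancherel is equivalent to the stated bound.

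First, I would split the integration domain into the two regions $\{|\eta|\leq|\xi-\eta|\}$ and $\{|\eta|>|\xi-\eta|\}$, which corresponds morally to Bony's paraproduct decomposition $uv = T_uv + T_vu + R(u,v)$. On each region one of the two input frequencies dominates $|\xi|$, so one can invoke the Peetre-type inequality $\langle\xi\rangle \leq \sqrt{2}\,\max(\langle\xi-\eta\rangle,\langle\eta\rangle)$ together with the hypothesis $s \leq \min(s_1,s_2)$ to redistribute the weight $\langle\xi\rangle^s$ onto $\langle\xi-\eta\rangle^{s_1}$ or $\langle\eta\rangle^{s_2}$. When $s<0$ a dual form of Peetre is used; the assumption $s_1+s_2\geq 0$ guarantees that the weights produced by this redistribution carry no non-integrable low-frequency singularity. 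On each piece the task reduces to estimating an integral of the shape
\begin{equation*}
\int \langle\xi-\eta\rangle^{s_1}|\hat u(\xi-\eta)|\,\langle\eta\rangle^{s_2}|\hat v(\eta)|\,\langle\eta\rangle^{s-s_1-s_2}\,d\eta.
\end{equation*}

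Next, I would apply Cauchy--Schwarz in $\eta$ by extracting the factor $\langle\eta\rangle^{s-s_1-s_2}$, producing the scalar weight $\bigl(\int \langle\eta\rangle^{2(s-s_1-s_2)}\,d\eta\bigr)^{1/2}$. This integral is finite precisely when $s_1+s_2-s > \tfrac12$, which is exactly the sharp hypothesis $s < s_1+s_2-\tfrac12$. The remaining $L^2$-in-$\eta$ factor, integrated in $\xi$ by Fubini, yields $\|\Lambda^{s_1}u\|_{L^2}\|\Lambda^{s_2}v\|_{L^2}$. The symmetric region gives the same bound with $u$ and $v$ exchanged.

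The delicate point, and the main obstacle, is carrying this through \emph{uniformly} over all sign combinations of the triple $(s,s_1,s_2)$ with just the three hypotheses at one's disposal, without losing the endpoint $s<s_1+s_2-\tfrac12$. To avoid tedious case splits, the cleanest organisation is the Littlewood--Paley one: write $uv$ as the sum of the two paraproducts and the remainder, estimate each piece on every dyadic shell via Bernstein's inequality, and recombine through an $\ell^2$ summation of the dyadic bounds, the $\ell^2$ summability being exactly encoded by $s < s_1+s_2-\tfrac12$ (for the remainder term) and by $s \leq \min(s_1,s_2)$ (for the two paraproducts). For the precise combinatorics of this dyadic bookkeeping I would refer to \cite{alinhac_gerard} and \cite{Lannes_sharp_estimates}.
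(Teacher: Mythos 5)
This proposition is stated in the paper's appendix without proof: the authors simply refer the reader to \cite{alinhac_gerard} and \cite{Lannes_sharp_estimates}, so there is no in-paper argument to compare against. Your sketch is the standard proof of this classical product estimate and is correct in outline: writing $\widehat{uv}=\hat u * \hat v$, splitting into the regions $\{|\eta|\le|\xi-\eta|\}$ and $\{|\eta|>|\xi-\eta|\}$, using Peetre's inequality together with $s\le\min(s_1,s_2)$ to move the weight $\langle\xi\rangle^{s}$ onto the dominant frequency, and then applying Cauchy--Schwarz in $\eta$, with the scalar factor $\bigl(\int\langle\eta\rangle^{2(s-s_1-s_2)}\,d\eta\bigr)^{1/2}$ finite exactly under $s<s_1+s_2-\tfrac12$ in dimension one. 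The one place where your write-up is thinner than a complete proof is the middle step: the phrase ``a dual form of Peetre is used'' and the appeal to $s_1+s_2\ge 0$ gloss over the sign case analysis ($s\ge 0$ versus $s<0$, and the low-frequency interactions in the remainder term), which is precisely where the three hypotheses interact and where the actual bookkeeping lives; you correctly identify the Littlewood--Paley organisation as the clean way to handle it and defer the combinatorics to the same two references the paper cites. Since the paper defers the entire proof to those references, your proposal does strictly more than the paper, and nothing in it is wrong; to make it self-contained you would only need to write out the weight redistribution explicitly in each of the two regions for the two signs of $s$.
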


\section*{Acknowledgments}

\noindent The second author has been partially funded by the ANR project Dyficolti ANR-13-BS01-0003.

\footnotesize
\bibliographystyle{plain}
\bibliography{biblio}
\normalsize

\end{document}